\definecolor{babyblueeyes}{rgb}{0.54, 0.81, 0.94}
\definecolor{blizzardblue}{rgb}{0.67, 0.9, 0.93}
\definecolor{blue(munsell)}{rgb}{0.0, 0.5, 0.69}
\definecolor{bluegray}{rgb}{0.4, 0.6, 0.8}
\definecolor{bondiblue}{rgb}{0.0, 0.58, 0.71}
\definecolor{cadetblue}{rgb}{0.37, 0.62, 0.63}
\definecolor{carolinablue}{rgb}{0.6, 0.73, 0.89}
\definecolor{cinnamon}{rgb}{0.82, 0.41, 0.12}
\definecolor{darkcandyapplered}{rgb}{0.64, 0.0, 0.0}
\definecolor{darkcyan}{rgb}{0.0, 0.55, 0.55}
\definecolor{darkmidnightblue}{rgb}{0.0, 0.2, 0.4}
\definecolor{darkpastelblue}{rgb}{0.47, 0.62, 0.8}
\definecolor{frenchblue}{rgb}{0.0, 0.45, 0.73}
\DeclareMathOperator{\id}{\mathsf{id}}
\DeclareMathOperator{\hocolim}{\mathsf{hocolim}}
\DeclareMathOperator{\holim}{\mathsf{holim}}
\DeclareMathOperator{\source}{\mathsf{s}}
\DeclareMathOperator{\target}{\mathsf{t}}
\DeclareMathOperator{\upf}{\textit{f}}
\DeclareMathOperator{\upg}{\textit{g}}
\DeclareMathOperator{\uph}{\textit{h}}
\DeclareMathOperator{\upi}{\textit{i}}
\DeclareMathOperator{\upj}{\textit{j}}
\DeclareMathOperator{\upn}{\textup{n}}
\DeclareMathOperator{\upp}{\textit{p}}
\DeclareMathOperator{\upq}{\textit{q}}
\DeclareMathOperator{\D}{\EuScript{D}}
\DeclareMathOperator{\Qstab}{\EuScript{Q}}
\DeclareMathOperator{\M}{\EuScript{M}}
\DeclareMathOperator{\OpN}{\EuScript{N}}
\DeclareMathOperator{\V}{\EuScript{V}}
\DeclareMathOperator{\Prect}{\mathsf{P}}
\DeclareMathOperator{\Frect}{\mathsf{F}}
\DeclareMathOperator{\Srect}{\mathsf{S}}
\DeclareMathOperator{\Xrect}{\mathsf{X}}
\DeclareMathOperator{\Yrect}{\mathsf{Y}}
\DeclareMathOperator{\Zrect}{\mathsf{Z}}
\DeclareMathOperator{\Trect}{\mathsf{T}}
\DeclareMathOperator{\Nrect}{\mathsf{N}}
\DeclareMathOperator{\Mrect}{\mathsf{M}}
\DeclareMathOperator{\Qrect}{\mathsf{Q}}
\DeclareMathOperator{\Arect}{\mathsf{A}}
\DeclareMathOperator{\Brect}{\mathsf{B}}
\DeclareMathOperator{\Crect}{\mathsf{C}}
\DeclareMathOperator{\Vrect}{\mathsf{V}}
\DeclareMathOperator{\Wrect}{\mathsf{W}}
\DeclareMathOperator{\Urect}{\mathsf{U}}
\DeclareMathOperator{\Drect}{\mathsf{D}}
\DeclareMathOperator{\Irect}{\mathsf{I}}
\DeclareMathOperator{\Jrect}{\mathsf{J}}
\DeclareMathOperator{\Lrect}{\mathsf{L}}
\DeclareMathOperator{\Krect}{\mathsf{K}}
\DeclareMathOperator{\ACof}{\mathsf{ACof}}
\DeclareMathOperator{\AFib}{\mathsf{AFib}}
\DeclareMathOperator{\Cof}{\mathsf{Cof}}
\DeclareMathOperator{\Fib}{\mathsf{Fib}}
\DeclareMathOperator{\Rfrak}{\mathfrak{R}}
\DeclareMathOperator{\Ho}{\mathsf{Ho}}
\DeclareMathOperator{\Eq}{\mathsf{Eq}}
\DeclareMathOperator{\Map}{\mathsf{Map}}
\DeclareMathOperator{\Hom}{\mathsf{Hom}}
\DeclareMathOperator{\Ch}{\mathsf{Ch}}
\DeclareMathOperator{\Spt}{\EuScript{S}\mathsf{pt}}
 \newtheorem{thm}{Theorem}[section]
\newenvironment{taggedtheorem}[1]
{\taggedtheoremx}
{\endtaggedtheoremx}
\newtheorem{cor}[thm]{Corollary}
\newtheorem{lem}[thm]{Lemma}
\newtheorem{prop}[thm]{Proposition}
\theoremstyle{definition}
\newtheorem{defn}[thm]{Definition}
\newtheorem{notat}[thm]{Notation}
\theoremstyle{remark}
\newtheorem{rem}[thm]{Remark}
\newtheorem{rems}[thm]{Remarks}
 \title{
 	\begin{Large}
 		\textsc{When Bousfield localizations and homotopy idempotent functors meet again}	
 	\end{Large}
 }
 \author{Victor Carmona\thanks{The author was partially supported by the Spanish Ministry of Economy under the grant MTM2016-76453-C2-1-P (AEI/FEDER, UE), by the Andalusian Ministry of Economy and Knowledge and the Operational Program FEDER 2014-2020 under the grant US-1263032, by grant PID2020-117971GB-C21 of the Spanish Ministry of Science and Innovation, and grant FQM-213 of the Junta de Andaluc\'ia. He was also partly supported by Spanish Ministry of Science, Innovation and Universities grant FPU17/01871.}
 \date{September 2022}	
 }
\begin{document}

 	\maketitle
 	\vspace*{1mm}
 	\begin{abstract}
 		We generalize  Bousfield-Friedlander's Theorem and Hirschhorn's Localization Theorem 
 	to settings where the hypotheses are not satisfied at the expend of obtaining semimodel categories instead of model categories. We use such results to answer, in the world of semimodel categories, an open problem posed by May-Ponto about the existence of Bousfield localizations for Hurewicz and mixed type model structures (on spaces and chain complexes). We also provide new applications that were not available before, e.g.\ stabilization of non-cofibrantly generated model structures or applications to mathematical physics. \vspace*{5mm}\\
 		\textbf{Keywords:} Bousfield localization, idempotent functor, model category, Hurewicz and mixed model structures, stable model categories, homological algebra.	
 	\end{abstract}
 
 \maketitle

\tableofcontents

\section{Introduction}
	 The author's work on model structures for algebraic quantum field theories \cite{carmona_algebraic_2021} and factorization algebras (joint with Flores-Muro) \cite{carmona_model_2021} requires Bousfield localization techniques that were not available in the literature (see Section \ref{sect_Applications}). The essential obstacle is that the vast majority of known tools requires some properness asumption and/or smallness conditions. The present document provides general results to avoid some of these conditions at the expense of working with ``semi" instead of ``full" model categories.\footnote{Semimodel categories are a mild generalization of model categories which relax some of the factorization and lifting axioms asking them only for maps with cofibrant source or fibrant target. See \cite{fresse_modules_2009}.} 

The first tool is a generalization of the famous  Bousfield-Friedlander's theorem \cite{bousfield_homotopy_1978}. The underlying idea in the original result is that a nice homotopy idempotent functor\footnote{For the sake of conciseness, we refer to them as BF-(co)reflectors in the body of the paper.} on a nice model category can be used to construct a Bousfield localization. Our contribution is that working with semimodel categories instead of full model categories enables one to always construct the associated Bousfield localization for any  homotopy idempotent functor (and the process can be iterated).\footnote{There are subtleties concerning the notion of semimodel category that one has to use.} 
\begin{taggedtheorem}{\textbf{A}}[Theorem \ref{thm_LBLfromLocalizator}] \label{thm_A_LocalizatorLBL}
	Let $\M$ be a (right semi)model category and $\id\Rightarrow\Lrect$ a pointed homotopy idempotent functor. Then, $\M$ admits a left Bousfield localization at $\Lrect$-equivalences among right semimodel categories.
\end{taggedtheorem}

To the best of our knowledge, Theorem \ref{thm_A_LocalizatorLBL} is the first result that provides left Bousfield localizations of right semimodel categories and hence a good supply of non-trivial examples. Previously known existence theorems of left Bousfield localizations apply to left semimodel categories only, see \emph{Relation to other works} below for more.


Our second tool concerns a generalization of Hirschhorn's localization of cellular model structures, \cite[Theorem 4.1.1]{hirschhorn_model_2003}, in the absence of left properness (both the enriched and the non-enriched versions). 

\begin{taggedtheorem}{\textbf{B}}[Theorem \ref{thm_BousfieldLocalizationCellularCase}]\label{thm_B_CellularLBL} Let $\M$ be a cellular (left semi)model category whose generating sets have cofibrant domains and $\Srect$ a set of maps in $\M$. Then, the left Bousfield localization of $\M$ at $\Srect$ exists among left semimodel categories.
\end{taggedtheorem}  

Note that the working assumption that replaces left properness in Theorem \ref{thm_B_CellularLBL} is the cofibrancy of domains of generating (acyclic) cofibrations.

This work also collects several somehow surprising applications of Theorems \ref{thm_A_LocalizatorLBL} and \ref{thm_B_CellularLBL}.

We answer, in the realm of semimodel categories, an open question of May and Ponto in \cite{may_more_2012} that asks for the existence of Bousfield localizations at sets of maps for the Hurewicz and the mixed model structure on topological spaces (see \cite[Remark 19.5.7]{may_more_2012}). 

\begin{taggedtheorem}{\textbf{C}}[Propositions \ref{prop_LocalizationsOfHurewiczMixedModelsSIMPLICIALCASE} and \ref{prop_LocalizationsOfHurewiczMixedModelsTOPOLOGICALCASE}]
The Hurewicz and the mixed model structure on the category of topological spaces admit left Bousfield localizations at any set of maps among right semimodel categories.
\end{taggedtheorem} 

In fact, we distinguish between two ways to localize these model structures at a set of maps, we discuss right Bousfield localizations and our methods also apply to Hurewicz and mixed model structures on chain complexes.

Other applications of the main results, apart from the motivating examples coming from factorization algebras and algebraic quantum field theories, include: a criteria for Bousfield localization at a set of maps  assuming neither cellularity nor combinatoriality in Theorem \ref{thm_LBLatSequivalencesWith1StepLocalizator} or a stabilization machine for non-cofibrantly generated model categories (e.g. Hurewicz and mixed model structures).  See Section \ref{sect_Applications} for more.

\paragraph{Relation to other works.} For nice introductions to (homotopy) idempotent functors and Bousfield localizations, together with a plethora of examples where such techniques can be applied, see  \cite{dwyer_localizations_2004,lawson_introduction_2020}. See also \cite{balchin_handbook_2021} for all kind of data about model categories. 

Bousfield-Friedlander's Theorem originally appeared in \cite{bousfield_homotopy_1978}. Enhancements of this result are given in \cite{biedermann_duality_2015, bousfield_telescopic_2001, stanculescu_note_2008}. In particular, Proposition \ref{prop_LBLForNonFunctorialBFReflectors} generalizes Theorem \ref{thm_A_LocalizatorLBL} to incorporate Biedermann-Chorny's idea \cite[Appendix A]{biedermann_duality_2015}.

Regarding left and right Bousfield localizations of cellular model categories, the principal reference is Hirschhorn's book \cite{hirschhorn_model_2003}. In fact, our exposition and results owes a lot to this work. Hirschhorn's general localization results, together with Smith's Theorem \cite[Proposition 2.2]{barwick_left_2010}, are the most applied techniques in the literature concerning Bousfield localizations. However, both approaches require properness assumptions to work. Fortunately, several people observed that Bousfield localizations exist in some cases as semimodel categories without assuming properness assumptions and they deduced important results from this fact.    

When lacking left properness, the existence of left Bousfield localization for combinatorial (left semi)model categories is addressed in \cite{white_left_2020,henry_combinatorial_2023}. Indeed, Batanin-White \cite[Theorem B]{white_left_2020} generalize Smith's Theorem to combinatorial left semimodel categories and they  provide a thorough list of possible applications. It is also important to remark that substituting cellular by combinatorial in Theorem \ref{thm_B_CellularLBL}, one arrives at \cite[Theorem A]{white_left_2020}. Regarding non left proper cellular model categories, Goerss-Hopkins \cite{goerss_moduli_2005} and Harper-Zhang \cite{harper_topological_2019} showed that certain homological localizations of operadic algebras exist. However, to the best of our knowledge, there is no general result providing a parallel to the standard reference \cite{hirschhorn_model_2003} about existence of left Bousfield localization for cellular model categories. Theorem \ref{thm_B_CellularLBL} fills this gap.

\paragraph{Outline.} Section \ref{sect_Preliminaries} contains basic definitions of semimodel categories and an original discussion of Quillen functors and bifunctors between them. This is needed in order to compare left and right semimodel categories (Proposition \ref{prop_WeakQuillenPairsInduceAdjunctions}) and to have a sound theory of enrichment for semimodel categories (Proposition \ref{prop_CoreQuillenAdjOfTwoVariablesYieldAdjunctionsOfTwoVariables} and Remark \ref{rem_EnrichmentOfSemimodels}). Theorem \ref{thm_A_LocalizatorLBL} and its proof constitutes Section \ref{sect_LBLviaLocalizators}. The bridge between Theorems \ref{thm_A_LocalizatorLBL} and \ref{thm_B_CellularLBL} is constructed in Section \ref{sect_LocalizatorsFromSmallData}, where we try to obtain a pointed homotopy idempotent functor which corresponds to left Bousfield localization at a set of maps (Theorem \ref{thm_LBLatSequivalencesWith1StepLocalizator}). Left Bousfield localization of non left-proper cellular model categories is addressed in Section \ref{sect_LBLviaCellularity} and it leads up to Theorem \ref{thm_B_CellularLBL}. Finally, Section \ref{sect_Applications} is comprised of examples and applications of the main results.

\paragraph{Conventions:} We will freely use reduced notations for basic notions in the theory of model categories. For instance, \emph{llp} (resp. \emph{rlp}) refers to left (resp. right) lifting property, $\upi\boxslash\upp$ means $\upi$ satisfies llp with respect to $\upp$, \emph{wfs} means weak factorization system, \emph{soa} denotes small object argument, etc.

\section{Basics on semimodel structures}\label{sect_Preliminaries}
	In this section we review basic definitions of the theory of semimodel categories. The notions of coSpitzweck semimodel structure and of core Quillen adjunction are introduced. We will use notation and basic results from \cite{barwick_left_2010, fresse_modules_2009, henry_combinatorial_2023}.

\paragraph{Semimodel categories.}

\begin{defn} A \emph{structured homotopical category} $\M$ is a category $\Mrect$ endowed with three classes of maps $(\Cof,\Eq,\Fib)$ which satisfy:
	\begin{itemize}
		\item $\Mrect$ is bicomplete, i.e. has all limits and colimits.
		\item $\Eq$ is closed under retracts and satisfies 2-out of-3.
		\item $\Cof$ (resp.\ $\Fib$) is closed under retracts and pushouts (resp.\ pullbacks).
	\end{itemize}
\end{defn}

For brevity we make use of the following notation introduced in \cite{henry_weak_2020,henry_combinatorial_2023}.
\begin{defn} A \emph{core cofibration} (resp.\ \emph{core fibration}) is a cofibration that has cofibrant source (resp.\ fibrations with fibrant target). We denote the class of core cofibrations (resp.\ core fibrations) by $\Cof_{\circ}$ (resp.\ $\Fib_{\circ}$).
\end{defn}

\begin{notat} We also adopt the following classical notation:
	$$
	\begin{array}{ccccc}
\ACof_{(\circ)} & \equiv & \{\text{(core) acyclic cofibrations}\} & \equiv & 	\Cof_{(\circ)}\cap\Eq\vspace*{2mm}\\
\AFib_{(\circ)}	 & \equiv & \{\text{(core) acyclic fibrations}\} & \equiv & \Fib_{(\circ)}\cap\Eq
	\end{array}.
	$$
\end{notat}

\begin{defn}\label{defn_FresseSemimodel} A \emph{Fresse right semimodel category} $\M$ is a structured homotopical category $(\Mrect,\Cof,\Eq,\Fib)$ which satisfies:
	\begin{itemize}
		\item[(i)] (Lifting axioms) $\ACof\boxslash\Fib_{\circ}$ and $\Cof\boxslash\AFib_{\circ}$.
		\item[(ii)] (Factorization axioms) Any map $\upf$ with fibrant target can be factored as
		$$
		\upf\colon\bullet\xrightarrow{\ACof}\bullet\xrightarrow{\Fib_{\circ}}\bullet \quad \text{ or as }\quad\upf\colon\bullet\xrightarrow{\Cof}\bullet\xrightarrow{\AFib_{\circ}}\bullet.
		$$
		\item[(iii)] the terminal object $\mathbb{1}$ is fibrant.
	\end{itemize}
\end{defn}

\begin{defn}\label{defn_SpitzweckSemimodel} A \emph{Spitzweck right semimodel category} is a Fresse right semimodel category $\M$ which additionally satisfies that $(\ACof,\Fib)$ is a weak factorization system (wfs), i.e. 
	\item[(i)] (Lifting axiom) $\ACof\boxslash\Fib$.
	\item[(ii)] (Factorization axiom) Any map $\upf$ can be factored as 
	$
	\upf\colon\bullet\xrightarrow{\ACof}\bullet\xrightarrow{\Fib}\bullet.
	$
\end{defn}
	
\begin{defn}\label{defn_coSpitzweckSemimodel}	
	A \emph{coSpitzweck right semimodel category} is a Fresse right semimodel category $\M$ which additionally satisfies that $(\Cof,\AFib)$ is a wfs.
\end{defn}

 Of course, left semimodel categories can also be defined. The more concise way to do so is by appealing to duality: $\M=(\Mrect,\Cof,\Eq,\Fib)$ is a \emph{Fresse} (resp. \emph{(co)Spitzweck}) \emph{left semimodel} category iff $\M^{\text{op}}=(\Mrect^{\text{op}},\Fib^{\text{op}},\Eq^{\text{op}},\Cof^{\text{op}})$ is a Fresse (resp. (co)Spitzweck) right semimodel category.

\begin{rem} A condensed way to record what the axioms for the three variants of right semimodel structures are is given by the following diagram
	$$
	\begin{tikzcd}[ampersand replacement=\&]
	\& \& \begin{matrix}
	\text{Fresse} 
	\end{matrix}\ar[lld, "\begin{matrix}
	(\ACof\text{,}\Fib)\\
	\text{becomes wfs}
	\end{matrix}"', rightsquigarrow] \ar[rrd, "\begin{matrix}
	(\Cof\text{,}\AFib)\\
	\text{becomes wfs}
	\end{matrix}", rightsquigarrow]\& \&\\
	\begin{matrix}
	\text{Spitzweck} 
	\end{matrix} \& \& \& \& \begin{matrix}
	\text{coSpitzweck}
	\end{matrix}
	\end{tikzcd}.
	$$
	It is also useful to consider that $(\Cof,\AFib)$ and $(\ACof,\Fib)$ in a Fresse right semimodel structure are not wfs, but a sort of \emph{right wfs}. Thus, passing from Fresse to (co)Spitzweck semimodel structures asks for an enhancement of one of those right wfs to a wfs.
\end{rem}

 \begin{notat}
In the sequel, by semimodel category we understand any of the variants: left or right Fresse/(co)Spitzweck semimodel structure. If we also specify a direction, e.g. left semimodel category, we want to refer to any variant of left semimodel structure.
 \end{notat}

\begin{rems}\label{rems_SemimodelsAndTractability} A couple of simple observations:
	\begin{itemize}
		\item There are even weaker variants of semimodel categories whose lifting axioms are related to another structured homotopical category via an adjunction, see \cite[Section 1]{barwick_left_2010} and \cite[Subsection 12.1.9]{fresse_modules_2009}.
		\item There is a natural notion of cofibrantly generated left or right semimodel structure (\cite[Definition 1.21]{barwick_left_2010} and \cite[Subsection 12.1.3]{fresse_modules_2009}).
		\item It seems that there is not a clear consensus on what \emph{tractable} means for (semi)model categories. In contrast with \cite{barwick_left_2010}, we call a (semi)model structure  \emph{tractable} if it is cofibrantly generated and its generating sets can be chosen to have cofibrant domains (they are core cofibrations).
	\end{itemize}	
\end{rems}

\paragraph{(Core) Quillen functors and adjunctions.}
The definition of Quillen adjunction for semimodel categories goes as follows.
\begin{defn}\label{defn_QuillenPairOfSemimodels}
A \emph{Quillen adjunction} between right semimodel categories is an ordinary adjunction $\Frect\dashv\Urect$ such that $\Frect$ preserves (acyclic) cofibrations. 
\end{defn}
\begin{rem} In contrast with full model structures, Definition \ref{defn_QuillenPairOfSemimodels} does not admit reformulations in terms of the right adjoint or in terms of both functors together like Quillen's original definition of Quillen adjunction. Consequently, we can not define a right Quillen functor between right semimodel categories without appealing to its left adjoint. Besides, it is clear that this notion cannot be used to connect left and right semimodel categories.
	
By the axioms of a Fresse right semimodel structure, the right adjoint of a Quillen adjunction is only known to preserve core (acyclic) fibrations. That suffices to get an adjunction between homotopy categories (see Proposition \ref{prop_WeakQuillenPairsInduceAdjunctions}).
\end{rem}

Motivated by these issues, we define a mild generalization which is more symmetric and which suffices to produce adjunctions at a homotopical level.

\begin{defn}\label{defn_WeakQuillenPairOfSemimodels}
	A \emph{core Quillen adjunction} between semimodel categories is an ordinary adjunction $\Frect\dashv\Urect$ such that $\Frect$ preserves core (acyclic) cofibrations and $\Urect$ preserves core (acyclic) fibrations. 
\end{defn}

It is clear that Quillen adjunctions are core Quillen adjunctions by the remark above, but there is also a converse statement for tractable semimodel categories.

\begin{prop}\label{prop_WeakQuillenPairForCoreCofibGen}
	Let $\Frect\colon\M\rightleftarrows\OpN\colon\Urect$ be a plain adjunction between semimodel categories. If $\M$ is tractable and $\Frect$ preserves core (acyclic) cofibrations, then $\Urect$ preserves (acyclic) fibrations. In particular, if $\Frect\dashv\Urect$ is a core Quillen adjunction, it is also a Quillen adjunction.
\end{prop}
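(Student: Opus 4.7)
The plan is to exploit the tractability of $\M$ to translate preservation of (acyclic) fibrations by $\Urect$ into a lifting check against generating sets, transport that check across the adjunction, and then resolve it internally to $\OpN$.

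By tractability, I fix generating sets $\Irect\subseteq\Cof_{\circ}$ and $\Jrect\subseteq\ACof_{\circ}$ for cofibrations and acyclic cofibrations of $\M$. Cofibrant generation characterizes fibrations (resp.\ acyclic fibrations) of $\M$ by the right lifting property against $\Jrect$ (resp.\ $\Irect$). Hence, for a fibration $\upp$ of $\OpN$, the membership $\Urect(\upp)\in\Fib$ reduces to $\Jrect\boxslash\Urect(\upp)$, which the $(\Frect,\Urect)$-adjunction recasts as $\Frect(\Jrect)\boxslash\upp$ in $\OpN$. The hypothesis on $\Frect$ then forces $\Frect(\Jrect)\subseteq\ACof_{\circ}$, and an entirely parallel argument using $\Irect$ gives the acyclic fibration statement via $\Frect(\Irect)\subseteq\Cof_{\circ}$.

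Everything now hinges on an internal lifting property of $\OpN$: core acyclic cofibrations lift against all fibrations, and core cofibrations against all acyclic fibrations. For the Spitzweck and coSpitzweck variants this is immediate from the relevant weak factorization system, and for left Fresse semimodel structures it is the native form of the dualized axioms. The pure right Fresse case is the delicate one, since the available axiom $\ACof\boxslash\Fib_{\circ}$ only handles fibrations with fibrant target. Here I would upgrade via the fibrant-replacement-and-pullback routine: replace the codomain by $Y\xrightarrow{\ACof}Y'\xrightarrow{\Fib_{\circ}}\mathbb{1}$, factor the composite $X\to Y'$ as $X\xrightarrow{\ACof}\tilde{X}\xrightarrow{\Fib_{\circ}}Y'$ (available because $Y'$ is fibrant), apply the Fresse axiom to the resulting core fibration $\tilde{X}\to Y'$, and descend the obtained lift back to $\upp$ through the pullback of $\tilde{X}\to Y'$ along $Y\to Y'$, exploiting the cofibrancy of the source of the map in $\Frect(\Jrect)$ to ensure the auxiliary lifting problem is solvable.

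The concluding sentence of the proposition, that a core Quillen adjunction upgrades to a Quillen adjunction, follows once we further note that tractability forces $\Frect$ to preserve all (acyclic) cofibrations: any such map of $\M$ is a retract of a transfinite composition of pushouts of elements of $\Irect$ or $\Jrect$, constructions that $\Frect$ preserves as a left adjoint, and whose $\Frect$-images land in $\Cof$ or $\ACof$ of $\OpN$ by the hypothesis and the closure axioms. The main obstacle in the plan is the right Fresse case of the internal lifting property: the absence of a full weak factorization system $(\ACof,\Fib)$ forces one to bounce between the original $\upp$ and a fibrantly replaced version, and the cofibrancy of the sources of maps in $\Frect(\Jrect)$ and $\Frect(\Irect)$ is precisely what makes the auxiliary Fresse liftings available and the final descent viable.
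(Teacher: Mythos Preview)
Your approach---transport lifting problems across the adjunction and use the generating sets---is exactly the paper's. The paper's proof is a single sentence and does not perform the case analysis you attempt; it simply asserts that the adjunction $\Frect(\upi)\boxslash\upp \Leftrightarrow \upi\boxslash\Urect(\upp)$ together with tractability finishes the argument.

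Your extra care about the right Fresse case is well-motivated, but the workaround you sketch has a gap. After producing the lift $h\colon B\to\tilde{X}$ against the core fibration $\tilde{p}\colon\tilde{X}\to Y'$ and forming the pullback $P=\tilde{X}\times_{Y'}Y$, you obtain a new square with $j$ against the canonical map $X\to P$. To ``descend'' you must solve \emph{this} lifting problem, but nothing you have said forces $X\to P$ to be an acyclic fibration, a core fibration, or anything else the Fresse axioms apply to: it is the comparison map into a pullback, and the pullback of the acyclic cofibration $Y\to Y'$ along $\tilde{p}$ has no prescribed behaviour in a bare right Fresse structure. The cofibrancy of the source of $j$ does not by itself unlock a lift here. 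So this step, which you flag as ``the main obstacle'', is not actually discharged.

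In practice the paper only invokes this proposition in left semimodel contexts (notably in the proof of Theorem~\ref{thm_BousfieldLocalizationCellularCase}), where the needed lifting $\ACof_{\circ}\boxslash\Fib$ and $\Cof_{\circ}\boxslash\AFib$ are precisely the native Fresse axioms, and your argument is complete and correct in that setting. For the right-variant cases the honest reading is that the relevant lifting comes either from a full wfs (Spitzweck or coSpitzweck) or from the precise meaning of ``cofibrantly generated'' for right semimodel categories, which in the references cited characterises fibrations only in the appropriately restricted sense; the paper is content to leave this implicit.
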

   \begin{proof}
   It is immediate from $\M$ being tractable by transporting lifting problems along the adjunction, i.e. $
   \Frect(\upi)\boxslash\upp$ iff $\upi\boxslash\Urect(\upp)$.
   \end{proof}

The following lemmas are designed to prove that a core Quillen adjunction between semimodel categories induces an adjunction between homotopy categories (Proposition \ref{prop_WeakQuillenPairsInduceAdjunctions}).

\begin{lem}\label{lem_LeftHomotopyClassesIdentifyCofibrantReplacements}
If $\M$ is a semimodel category and $\Xrect$ is a bifibrant object in $\M$, the hom-functor modulo left homotopies, $\Zrect\mapsto \Mrect(\Xrect,\Zrect)/\hspace*{-1mm}\sim_{l}\,$, where $\Zrect$ varies over fibrant objects, 
sends core acyclic fibrations to bijections. The dual statement also holds.
\end{lem}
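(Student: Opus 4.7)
The plan is to verify surjectivity and injectivity of the induced map
$$p_*\colon\Mrect(\Xrect,\Zrect)/\hspace*{-1mm}\sim_l\;\longrightarrow\;\Mrect(\Xrect,\Wrect)/\hspace*{-1mm}\sim_l$$
for an arbitrary core acyclic fibration $p\colon\Zrect\to\Wrect$. Note that $\Wrect$ is fibrant by hypothesis and $\Zrect$ is fibrant because the map $\Zrect\to\mathbb{1}$ factors as $\Zrect\xrightarrow{p}\Wrect\to\mathbb{1}$, a composition of fibrations. Well-definedness of $p_*$ is immediate, since post-composition with $p$ takes left homotopies to left homotopies. The whole argument rests on two ingredients already present in a Fresse semimodel structure: the lifting axiom $\Cof\boxslash\AFib_{\circ}$ of Definition \ref{defn_FresseSemimodel}(i), and the factorization axiom for maps with fibrant target, which allows one to build a cylinder object for $\Xrect$ by factoring the fold map $\Xrect\sqcup\Xrect\to\Xrect$ (licit because $\Xrect$ is bifibrant, hence fibrant).

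For surjectivity, given $h\colon\Xrect\to\Wrect$, I would form the square
$$
\begin{tikzcd}
\emptyset \ar[r] \ar[d] & \Zrect \ar[d, "p"] \\
\Xrect \ar[r, "h"'] \ar[ur, dashed, "\tilde h"] & \Wrect
\end{tikzcd}
$$
whose left vertical is a cofibration because $\Xrect$ is cofibrant and whose right vertical lies in $\AFib_{\circ}$; the lifting axiom yields $\tilde h$ with $p\tilde h=h$, so $p_*[\tilde h]=[h]$. For injectivity, suppose $f,g\colon\Xrect\to\Zrect$ satisfy $pf\sim_l pg$, witnessed by a cylinder object $\Cyl(\Xrect)$ and a homotopy $H\colon\Cyl(\Xrect)\to\Wrect$. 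By definition of a cylinder object, the structural map $\Xrect\sqcup\Xrect\to\Cyl(\Xrect)$ is a cofibration, and its source is cofibrant since coproducts of cofibrant objects are cofibrant. Solving the lifting problem
$$
\begin{tikzcd}
\Xrect\sqcup\Xrect \ar[r, "{(f,g)}"] \ar[d] & \Zrect \ar[d, "p"] \\
\Cyl(\Xrect) \ar[r, "H"'] \ar[ur, dashed, "\tilde H"] & \Wrect
\end{tikzcd}
$$
via $\Cof\boxslash\AFib_{\circ}$ produces a left homotopy $\tilde H$ from $f$ to $g$, so $[f]=[g]$.

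The argument is short, and the main point of care — more bookkeeping than genuine obstacle — is to ensure that no axiom beyond those of a Fresse semimodel structure is invoked, so that the conclusion holds uniformly across the Fresse and (co)Spitzweck variants; in particular, the only lifting instance used is $\Cof\boxslash\AFib_{\circ}$, and the only factorization used is of a map with fibrant target. The dual statement follows by passing to $\M^{\mathrm{op}}$, which swaps left and right homotopies as well as cofibrancy and fibrancy.
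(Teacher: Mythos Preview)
Your proof is correct and follows the same lifting-argument approach as the paper's, which is terse and leaves to the reader precisely the details you supply. One small caveat: you invoke only the right-semimodel axiom $\Cof\boxslash\AFib_{\circ}$ from Definition~\ref{defn_FresseSemimodel}, whereas the lemma must also cover left semimodel categories, for which the available lifting is instead $\Cof_{\circ}\boxslash\AFib$; since the cofibrations you actually use ($\emptyset\to\Xrect$ and $\Xrect\sqcup\Xrect\to\Cyl(\Xrect)$) are core and every core acyclic fibration is in particular an acyclic fibration, both lifting problems are solvable in either variant and nothing is lost.
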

\begin{proof}	
 To check the claim, consider a core acyclic fibration $\upp\colon\Zrect\to\Trect$. Regardless of wether $\M$ is a left or a right semimodel category, $\upp_*\colon\Mrect(\Xrect,\Zrect)\to\Mrect(\Xrect,\Trect)$ is surjective (and so it is also surjective when taking quotients by left homotopy relation). It remains to see that $\upp_*$ is also injective if we consider left homotopy classes of maps. This follows by a simple lifting argument.
\end{proof}

\begin{lem}\label{lem_LeftAndRightHomotopiesCoincideSometimes}
	Let $\M$ be a semimodel category, $\Xrect$ a bifibrant object and $\Zrect$ a fibrant object. If $\Zrect$ admits a good path object, i.e. a factorization of the diagonal map $\Zrect\to\Zrect\times\Zrect$ as an equivalence followed by a fibration, then the notions of left and right homotopy coincide in the set $\Mrect(\Xrect,\Zrect)$. The dual statement also holds. 
\end{lem}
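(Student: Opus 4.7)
The plan is to adapt the classical bifibrant argument that left and right homotopy agree in a model category to the semimodel setting, verifying at each step that the invoked factorizations and liftings stay within the restricted axioms of a Fresse semimodel category. The \emph{dual statement} then follows by passing to $\M^{\text{op}}$, so I focus on the stated one.

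The first step is to build a good cylinder object for $\Xrect$: bifibrancy of $\Xrect$ ensures that the fold map $\Xrect\sqcup\Xrect\to\Xrect$ has simultaneously a fibrant target and (by closure of cofibrations under coproducts) a cofibrant source, so the factorization axiom applies in either the right- or the left-semimodel direction and yields $\Xrect\sqcup\Xrect\xrightarrow{(i_0,i_1)}\text{Cyl}(\Xrect)\xrightarrow{\sigma}\Xrect$ with $(i_0,i_1)$ a core cofibration, $\sigma$ a core acyclic fibration, and each $i_k$ a core acyclic cofibration by 2-out-of-3. Dually, the good path object $\Zrect\xrightarrow{s}\Zrect^I\xrightarrow{(d_0,d_1)}\Zrect\times\Zrect$ has $(d_0,d_1)$ a core fibration (since $\Zrect$ fibrant forces $\Zrect\times\Zrect$ fibrant, as a pullback of $\Zrect\to\mathbb{1}$ along itself) and each $d_k$ a core acyclic fibration.

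To upgrade a left homotopy $H\colon\text{Cyl}(\Xrect)\to\Zrect$ into a right homotopy, I would lift the commutative square with top edge $sf\colon\Xrect\to\Zrect^I$, left edge $i_0$, bottom edge $(f\sigma,H)\colon\text{Cyl}(\Xrect)\to\Zrect\times\Zrect$ and right edge $(d_0,d_1)$; the lift $K$ exists by the lifting axiom between (core) acyclic cofibrations and (core) fibrations, and $K\circ i_1\colon\Xrect\to\Zrect^I$ is the sought right homotopy. Dually, to upgrade a right homotopy $K\colon\Xrect\to\Zrect^I$ into a left homotopy, I would lift the square with top edge $K\sqcup sg\colon\Xrect\sqcup\Xrect\to\Zrect^I$, left edge $(i_0,i_1)$, bottom edge $g\sigma\colon\text{Cyl}(\Xrect)\to\Zrect$ and right edge $d_1$; the lift $L$ exists by the lifting axiom between (core) cofibrations and (core) acyclic fibrations, and $d_0\circ L\colon\text{Cyl}(\Xrect)\to\Zrect$ is the desired left homotopy.

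The only genuine bookkeeping, and the sole place where one leaves the comfortable setting of full model categories, is to verify at each step that the cofibration we lift against has a cofibrant source (respectively, that the fibration has a fibrant target), so that the restricted lifting axioms suffice. The bifibrancy of $\Xrect$ together with the fibrancy of $\Zrect$ is exactly what ensures this, which is why the same diagram-chase goes through uniformly across all left- and right-semimodel variants.
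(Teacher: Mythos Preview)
Your argument is correct and is precisely the classical diagram chase from \cite[Proposition 1.2.5]{hovey_model_1999} that the paper simply cites; you have carried out the verification that all factorizations and liftings used stay within the core, which is the only extra content needed in the semimodel setting. One tiny point worth making explicit is that your argument exchanges left and right homotopies realized on a \emph{fixed} good cylinder and the given good path object, so if ``left homotopy'' is taken in the broadest sense (via arbitrary cylinder objects) you should first reduce to a good cylinder---but for bifibrant $\Xrect$ this reduction is the standard one and causes no difficulty.
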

\begin{proof}
	See the proof of  \cite[Proposition 1.2.5 (v)]{hovey_model_1999}; the cited hypotheses are enough for that argument to work.
\end{proof}

\begin{prop}\label{prop_WeakQuillenPairsInduceAdjunctions} 
If $\Frect\dashv\Urect$ is a core Quillen adjunction between semimodel categories, then it induces an adjunction $\mathbb{L}\Frect\dashv\mathbb{R}\Urect$ between homotopy categories.
\end{prop}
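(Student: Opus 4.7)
The plan is to run the standard derived adjunction argument, adapted to the semimodel setting by leveraging Lemmas \ref{lem_LeftHomotopyClassesIdentifyCofibrantReplacements} and \ref{lem_LeftAndRightHomotopiesCoincideSometimes}. First I would construct bifibrant replacement functors in both $\M$ and $\OpN$. In a right semimodel category this proceeds in two steps: factor $\Xrect\to\mathbb{1}$ as an acyclic cofibration followed by a core fibration to produce a fibrant replacement $\Rrect\Xrect$, then factor $\emptyset\to\Rrect\Xrect$ as a cofibration followed by a core acyclic fibration (valid because $\Rrect\Xrect$ is now fibrant). The dual order handles the left semimodel case. A Ken Brown type argument using the preservation of core acyclic cofibrations by $\Frect$ (resp.\ core acyclic fibrations by $\Urect$) shows that $\Frect$ and $\Urect$ preserve weak equivalences between appropriately bifibrant objects, so $\mathbb{L}\Frect$ and $\mathbb{R}\Urect$, defined by composing $\Frect$ and $\Urect$ with the respective bifibrant replacement functors, descend to well-defined functors on the homotopy categories.

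Next, for $\Xrect$ bifibrant in $\M$ and $\Yrect$ bifibrant in $\OpN$, one observes that $\Frect\Xrect$ is cofibrant in $\OpN$ (since $\emptyset\to\Xrect$ is a core cofibration, preserved by $\Frect$) and $\Urect\Yrect$ is fibrant in $\M$ (dually). By Lemma \ref{lem_LeftHomotopyClassesIdentifyCofibrantReplacements} and its dual, hom-sets in the homotopy categories between bifibrant objects are computed as left (equivalently right, by Lemma \ref{lem_LeftAndRightHomotopiesCoincideSometimes}) homotopy classes of point-set maps. The ordinary adjunction gives a natural bijection $\OpN(\Frect\Xrect,\Yrect)\cong\Mrect(\Xrect,\Urect\Yrect)$; the remaining step is to check that it respects homotopy. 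For this one factors the fold map $\Xrect\sqcup\Xrect\to\Xrect$ (allowed because $\Xrect$ is fibrant) to build a good cylinder on $\Xrect$ whose $\Frect$-image relates to a good cylinder on $\Frect\Xrect$, and dually builds a good path object on $\Yrect$ whose $\Urect$-image relates to a good path object on $\Urect\Yrect$. Combining these with Lemma \ref{lem_LeftAndRightHomotopiesCoincideSometimes} to freely switch between left and right homotopies yields the desired natural bijection $\Ho\M(\Xrect,\Urect\Yrect)\cong\Ho\OpN(\Frect\Xrect,\Yrect)$.

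The principal obstacle I anticipate is the uniform treatment of the four semimodel variants together with the interplay between left and right homotopies. The factorization axioms are only available for maps with fibrant target (right semimodel) or cofibrant source (left semimodel), so constructing cylinders, path objects, and bifibrant replacements always requires a corresponding endpoint property. In particular the good cylinder on $\Xrect$ used in the comparison exists precisely because $\Xrect$ is fibrant (as target of the fold map), so bifibrancy, not merely cofibrancy, of the chosen objects is essential throughout; similarly, making the point-set adjunction compatible with the homotopy relations relies critically on Lemma \ref{lem_LeftAndRightHomotopiesCoincideSometimes} as the bridge between the two sides.
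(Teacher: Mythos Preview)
Your proposal is correct and follows essentially the same approach as the paper's proof: bifibrant replacement plus Ken Brown for well-definedness of $\mathbb{L}\Frect$ and $\mathbb{R}\Urect$, then the point-set adjunction combined with Lemmas \ref{lem_LeftHomotopyClassesIdentifyCofibrantReplacements} and \ref{lem_LeftAndRightHomotopiesCoincideSometimes} to pass to homotopy classes. The paper is a bit more explicit about one point you leave implicit: since $\Frect\Xrect$ is only cofibrant and $\Urect\Yrect$ only fibrant, an intermediate fibrant replacement $\mathsf{R}\Frect\Xrect$ and cofibrant replacement $\mathsf{Q}\Urect\Yrect$ are inserted (and then removed via Lemma \ref{lem_LeftHomotopyClassesIdentifyCofibrantReplacements} and its dual) so that the description of $\Ho$-hom-sets as homotopy classes between bifibrant objects applies at every stage of the chain of bijections.
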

\begin{proof}
 By Ken Brown's Lemma (which obviously works for semimodel categories), $\Frect\cdot\mathsf{Rep}_{\text{cf}}$ induces a functor $\mathbb{L}\Frect\colon\Ho\M\to\Ho\OpN$, where $\mathsf{Rep}_{\text{cf}}$ denotes a bifibrant replacement functor. Analogously one obtains $\mathbb{R}\Urect\colon\Ho\OpN\to\Ho\M$. What remains is to check that $\mathbb{L}\Frect$ and $\mathbb{R}\Urect$ are adjoint. The natural bijection between hom-sets  $\Ho\OpN(\mathbb{L}\Frect(\Xrect),\Yrect)\cong\Ho\M(\Xrect,\mathbb{R}\Urect(\Yrect))$ is more clearly described assuming $\Xrect$ and $\Yrect$ are already bifibrant. In this case it is given by the following composition of natural bijections, where $\mathsf{R}$ and $\mathsf{Q}$ represent fibrant and cofibrant replacements respectively, 
$$
\begin{tikzcd}[ampersand replacement=\&]
\Ho\OpN(\Frect(\Xrect),\Yrect) \ar[d,"(i)"']\& \& \&  \Ho\M(\Xrect,\Urect(\Yrect))\\
\underset{\text{right homotopy}}{\underline{\Nrect(\mathsf{R}\Frect(\Xrect),\Yrect)}}\ar[r, "(ii)"]\&
 \underset{\text{right homotopy}}{\underline{\Nrect(\Frect(\Xrect),\Yrect)}}\ar[r, "(iii)"] \& \underset{\text{left homotopy}}{\underline{\Mrect(\Xrect,\Urect(\Yrect))}}\ar[r, "(iv)"] \& \underset{\text{left homotopy}}{\underline{\Mrect(\Xrect,\mathsf{Q}\Urect(\Yrect))}} \ar[u, "(v)"']
\end{tikzcd}.
$$
This is the case due to the following facts: ($i$) in a  semimodel category, maps in the homotopy category can be given by left homotopy classes of maps between bifibrant objects. ($ii$) observe that $\Yrect$ is bifibrant and apply the dual of Lemma \ref{lem_LeftHomotopyClassesIdentifyCofibrantReplacements}. ($iii$) on the left hand side, we can interchange left and right homotopy relations by Lemma \ref{lem_LeftAndRightHomotopiesCoincideSometimes}. When doing so, just note that the adjunction $\Frect\dashv\Urect$ respects the left homotopy relation (since $\Frect$ preserves cylinders for cofibrant objects) and so induces a bijection between quotients. 
($iv$) observe that $\Xrect$ is bifibrant and apply Lemma \ref{lem_LeftHomotopyClassesIdentifyCofibrantReplacements}. ($v$) same as ($i$).
\end{proof}


\paragraph{Core Quillen bifunctors and enrichment.} As for Quillen functors, the theory of Quillen bifunctors (or Quillen functors with more variables) does not work well for semimodel categories. This fact produces complications if one is willing to deal with enriched semimodel categories, monoidal semimodel categories, etc.

\begin{rem}\label{rem_EnrichmentOfSemimodels}
 For instance, a little bit of thought about how a bifunctor $\otimes\colon\Mrect\times\Mrect\to \Mrect$ can be compatible with a left semimodel structure on $\Mrect$ shows several obstructions.
\end{rem}

 For these reasons, we propose the obvious extension of core Quillen functors: 

\begin{defn}\label{defn_CoreQBifunctor} A \emph{core left Quillen bifunctor} is a bifunctor 
	$\boxdot\colon\Mrect\times\Nrect\rightarrow\Trect$
	between semimodel categories which satisfies the pushout product axiom just for core cofibrations. In other words, the associated pushout product functor between arrow categories $\square\colon \Mrect^{\mathbb{2}}\times\Nrect^{\mathbb{2}}\to\Trect^{\mathbb{2}}$ sends a pair of core cofibrations to a core cofibration which is acyclic if one of the original core cofibrations is acyclic.
	
	A \emph{core Quillen adjunction of two variables} is a two variable adjunction  $(\Map_{\text{l}},\boxdot,\Map_{\text{r}})$ such that $\boxdot$ is core left Quillen and $\Map_{\text{l/r}}$ are core right Quillen. 
\end{defn}

A simple generalization of the arguments given for core Quillen adjunctions leads us to:
\begin{prop}\label{prop_CoreQuillenAdjOfTwoVariablesYieldAdjunctionsOfTwoVariables} A core Quillen adjunction of two variables between semimodel categories induces an adjunction of two variables between homotopy categories.
\end{prop}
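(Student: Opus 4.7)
The plan is to generalize the proof of Proposition \ref{prop_WeakQuillenPairsInduceAdjunctions} to the two-variable setting. First, I would derive the three functors by precomposing with bifibrant replacement in each variable. To show this makes sense, I need a Ken Brown-style statement: for $Y$ core cofibrant, the functor $\boxdot(-, Y)$ sends equivalences between core cofibrant objects to equivalences. This follows from the pushout product axiom applied to a core (acyclic) cofibration $i$ in $\Mrect$ and the core cofibration $\emp\to Y$, because $i\,\square\,(\emp\to Y)$ coincides with $i\boxdot 1_Y$ and is a core (acyclic) cofibration in $\Trect$. Symmetrically in the other variable, so $\boxdot\cdot\mathsf{Rep}_{\text{cf}}$ descends to $\mathbb{L}\boxdot\colon\Ho\M\times\Ho\OpN\to\Ho\Trect$. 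The dual argument, using that $\Map_{\text{l/r}}$ are core right Quillen in each entry, yields derived functors $\mathbb{R}\Map_{\text{l}}$ and $\mathbb{R}\Map_{\text{r}}$.

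Next, I would establish the natural bijections
$$\Ho\Trect(\mathbb{L}\boxdot(\Xrect,\Yrect),\Zrect)\;\cong\;\Ho\OpN(\Yrect,\mathbb{R}\Map_{\text{r}}(\Xrect,\Zrect))\;\cong\;\Ho\M(\Xrect,\mathbb{R}\Map_{\text{l}}(\Yrect,\Zrect)),$$
assuming without loss of generality that $\Xrect$, $\Yrect$, $\Zrect$ are already bifibrant, so the derived functors may be computed without further replacement. Following the chain of identifications in the proof of Proposition \ref{prop_WeakQuillenPairsInduceAdjunctions}, I would represent homotopy-category hom-sets as left (or right) homotopy classes of maps between bifibrant objects, then replace $\mathsf{R}\boxdot(\Xrect,\Yrect)$ and $\mathsf{Q}\Map_{\text{r}}(\Xrect,\Zrect)$ by $\boxdot(\Xrect,\Yrect)$ and $\Map_{\text{r}}(\Xrect,\Zrect)$ via Lemma \ref{lem_LeftHomotopyClassesIdentifyCofibrantReplacements}, swap left and right homotopy relations where needed via Lemma \ref{lem_LeftAndRightHomotopiesCoincideSometimes}, and finally transport along the original unenriched adjunction of two variables $(\Map_{\text{l}},\boxdot,\Map_{\text{r}})$.

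The main obstacle, as in the one-variable case, is ensuring that the point-set adjunction of two variables respects the homotopy relation, so that it passes to a bijection of quotients. Here one uses that, for $\Xrect$ core cofibrant and any cylinder object $\Yrect\sqcup\Yrect\hookrightarrow\Cyl(\Yrect)\to\Yrect$ with the left map a core cofibration, the pushout product axiom makes $\boxdot(\Xrect,\Cyl(\Yrect))$ into a cylinder object for $\boxdot(\Xrect,\Yrect)$; hence left homotopies in $\Nrect$ between maps $\Yrect\to\Map_{\text{r}}(\Xrect,\Zrect)$ correspond under the adjunction to left homotopies in $\Trect$ between maps $\boxdot(\Xrect,\Yrect)\to\Zrect$, and symmetrically for the other adjoint. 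Once this compatibility is verified in both entries, naturality of the resulting bijections in all three bifibrant variables is automatic, and standard localization arguments extend them to arbitrary objects, yielding the desired two-variable adjunction on homotopy categories.
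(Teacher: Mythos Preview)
Your proposal is correct and follows precisely the route the paper indicates: the paper's proof consists of the single sentence ``A simple generalization of the arguments given for core Quillen adjunctions leads us to the following result,'' and your sketch is exactly that generalization of Proposition~\ref{prop_WeakQuillenPairsInduceAdjunctions}. The Ken Brown step via $\upi\,\square\,(\emp\to\Yrect)=\upi\boxdot 1_{\Yrect}$, the chain of bijections through Lemmas~\ref{lem_LeftHomotopyClassesIdentifyCofibrantReplacements} and~\ref{lem_LeftAndRightHomotopiesCoincideSometimes}, and the cylinder-preservation argument for compatibility with the homotopy relation are all the expected ingredients.
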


The definition of enriched (resp. closed symmetric monoidal...) semimodel category is natural from here. 

\begin{defn}\label{defn_VEnrichedSemimodel} Let $\V$ and $\M$ be semimodel categories.
	\begin{itemize}
		\item We say that $\V$ is a \emph{closed symmetric monoidal} semimodel category if it has a closed symmetric monoidal structure which is a core Quillen adjunction of two variables satisfying the unit axioms (\cite[Definition 4.2.6 (2) and Proposition 4.2.7]{hovey_model_1999}).
		\item Consider that $\V$ has the structure of the previous item. We say that $\M$ is a \emph{$\V$-semimodel category} if it is $\V$-tensored/cotensored/enriched, this structure establishes a core Quillen adjunction of two variables and those bifunctors also satisfy the appropiate unit axiom, i.e.\ the map $\Xrect\otimes \Qrect\mathbb{I}\to \Xrect$
		induced by a cofibrant approximation of the unit  $\Qrect\mathbb{I}\to\mathbb{I}$
		is a weak equivalence for all cofibrant objects $\Xrect\in\M$.
	\end{itemize}
\end{defn}

Once the apparatus of semimodel structures is settle, we turn into Bousfield localizations.

\section{Bousfield localization \`a la Bousfield-Friedlander}\label{sect_LBLviaLocalizators}
	
The most elegant way to look at localizations is through their universal property.
\begin{defn}\label{defn_LBL} Let $\M$ be a model category and $\Wrect$ a class of maps in $\M$. A left Bousfield localization of $\M$ at $\Wrect$ is a model $\mathsf{L}_{\Wrect}\M$ equipped with a left Quillen functor $\M\to\mathsf{L}_{\Wrect}\M$ which is initial among left Quillen functors $\M\to\OpN$ that send $\Wrect$ to equivalences.
\end{defn}	
\begin{rem} Of course, this universal property could be modified in several directions. For instance, one can ask that $\mathsf{L}_{\Wrect}\M$ has only a left (resp. right) semimodel structure or that the left Quillen functor $\M\to\mathsf{L}_{\Wrect}\M$ is only the left adjoint in a core Quillen adjunction. On a different direction, one can ask that $\M$ has some enrichment or monoidal structure and that so has the localization.
	
Instead of introducing the zoo of different modifications that Definition \ref{defn_LBL} admits, we leave the reader this task and we will just provide enough information when constructing Bousfield localizations to identify which is exactly the strongest universal property that it enjoys.
\end{rem}

\begin{paragraph}{Left Bousfield localization of right semimodel structures.}
	   Relying on Bousfield-Friedlander theorem (see \cite[Appendix A]{bousfield_homotopy_1978}), we make the following definition.
  \begin{defn}\label{defn_BFReflector}
  Let $\M$ be a semimodel category. A \emph{BF-reflector} on $\M$ is an endofunctor $\Rfrak\colon\Mrect\to\Mrect$ with a natural transformations
  $
  \upeta\colon \id\Rightarrow\Rfrak
  $
  satisfying: ($i$) $\Rfrak$ is homotopical and ($ii$) $\Rfrak\upeta$ and $\upeta\Rfrak$ are natural equivalences. 
  \end{defn}
  
  We just name BF-reflectors after Bousfield and Friedlander, since they are the essential ingredient in \cite[Theorem A.7]{bousfield_homotopy_1978}, but they have been also called \emph{pointed homotopy idempotent functors} in the literature.  Note that Bousfield-Friedlander asked BF-reflectors to satisfy an additional property, (A.6) in loc.cit.\;(which is hard to check in practice), to make their result work. Our goal in this section is to show that a choice of BF-reflector in $\M$ is enough to construct a left Bousfield localization of $\M$. 
  
  Our goal parallels \cite[Theorem A.7]{bousfield_homotopy_1978} (or more precisely Stanculescu's generalization of it  \cite{stanculescu_note_2008}) and the proofs follow the same lines. The fundamental new idea is similar to Goerss-Hopkins's insight to construct a left Bousfield localization without asking for left properness (see \cite[Section 1.5]{goerss_moduli_2005}).

  It will be useful to introduce a bit of notation.
 \begin{defn} Let $\Rfrak$ be a BF-reflector on a right semimodel category $\M$. 
 	\begin{itemize}
 		\item  An $\Rfrak$-equivalence is a map $\upf$ in $\Mrect$ such that $\Rfrak(\upf)$ is an equivalence. The class of all $\Rfrak$-equivalences is denoted $\Eq_{\Rfrak}=\Rfrak^{-1}(\Eq)$.
 		\item A $\Rfrak$-local object is an object $\Xrect\in\Mrect$ for which $\upeta_{\Xrect}$ is an equivalence. 
 		\item  A $\Rfrak$-\emph{fibration} is a map with the right lifting property with respect to all cofibrations which are $\Rfrak$-equivalences. The class of $\Rfrak$-fibrations is denoted $\Fib_{\Rfrak}=\big(\Cof\cap\Eq_{\Rfrak}\big)^{\boxslash}$.
 	\end{itemize}
 \end{defn}

With this notation at hand, we are ready to state the main result in this section:
\begin{thm}\label{thm_LBLfromLocalizator} Let $\M=(\Mrect,\Cof,\Eq,\Fib)$ be a Fresse (resp. coSpitzweck) right semimodel category and $\Rfrak$ a BF-reflector on $\M$. Then, 
	$
	\M_{\Rfrak}=(\Mrect,\Cof,\Eq_{\Rfrak},\Fib_{\Rfrak})
	$
is the left Bousfield localization of $\M$ at $\Rfrak$-equivalences among  Fresse (resp. coSpitzweck) right semimodel categories with left Quillen functors between them.
\end{thm}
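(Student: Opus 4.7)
The plan is to verify first that $\M_\Rfrak=(\Mrect,\Cof,\Eq_\Rfrak,\Fib_\Rfrak)$ is a Fresse (resp.\;coSpitzweck) right semimodel structure, and then to check the universal property. The class $\Eq_\Rfrak=\Rfrak^{-1}(\Eq)$ automatically satisfies the closure conditions of a structured homotopical category by the functoriality of $\Rfrak$ and the corresponding properties of $\Eq$, while $\Cof$ is unchanged and $\Fib_\Rfrak$ is defined by a right lifting property. Because $\Rfrak$ is homotopical one has $\Eq\subseteq\Eq_\Rfrak$, hence $\Fib_\Rfrak\subseteq\Fib$ and $\AFib\subseteq\AFib_\Rfrak$; in particular every $\Rfrak$-fibrant object is already fibrant in $\M$, and the terminal object is vacuously $\Rfrak$-fibrant.

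The characterization that drives the remainder is that \emph{an object is $\Rfrak$-fibrant if and only if it is fibrant in $\M$ and $\Rfrak$-local}. The forward direction follows by a lifting argument using the naturality of $\upeta$ together with Lemmas \ref{lem_LeftHomotopyClassesIdentifyCofibrantReplacements} and \ref{lem_LeftAndRightHomotopiesCoincideSometimes} to translate a lifting problem against $\ACof_\Rfrak$ into a bijection between homotopy classes that is witnessed by $\upeta$; the converse comes out of the factorization construction below. With this in hand, the remaining axioms follow an orderly pattern. The lifting $\ACof_\Rfrak\boxslash\Fib_\Rfrak$ is tautological. The factorization $\Cof\cdot\AFib_{\Rfrak,\circ}$ for maps with $\Rfrak$-fibrant target is obtained by factoring in $\M$ (allowed since $\Rfrak$-fibrant implies fibrant) and observing that $\AFib\subseteq\AFib_\Rfrak$. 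The lifting $\Cof\boxslash\AFib_{\Rfrak,\circ}$ then follows by the standard retract argument: any $\upp\in\AFib_{\Rfrak,\circ}$ factors in $\M$ as $\upq\circ\upj$ with $\upj\in\Cof$ and $\upq\in\AFib$, and $\upj\in\ACof_\Rfrak$ by 2-out-of-3 on $\Eq_\Rfrak$, whence $\upj\boxslash\upp$ realizes $\upp$ as a retract of $\upq$.

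The main technical step is the factorization of $\upf\colon\Xrect\to\Yrect$, with $\Yrect$ $\Rfrak$-fibrant, as an $\ACof_\Rfrak$ followed by a $\Fib_{\Rfrak,\circ}$. Following the spirit of Bousfield-Friedlander \cite[Appendix A]{bousfield_homotopy_1978}, we exploit the naturality square of $\upeta$ at $\upf$, in which $\upeta_\Yrect$ is an equivalence by $\Rfrak$-locality of $\Yrect$. Concretely, one first replaces $\Rfrak\Yrect$ by a fibrant object if needed, factors $\Rfrak\upf$ in the original structure, and forms the appropriate pullback over $\Yrect$; the map from $\Xrect$ to this pullback induced by $\upf$ and $\upeta_\Xrect$ is then factored further into a cofibration followed by an acyclic fibration, and a mild refinement produces the required factorization. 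The Goerss-Hopkins style trick mentioned in the text (cf.\;\cite[Section 1.5]{goerss_moduli_2005}) is what encodes the $\Rfrak$-equivalence condition on the left leg without invoking left properness of $\M$. The coSpitzweck case only demands upgrading the right weak factorization system $(\Cof,\AFib_\Rfrak)$ to a genuine wfs on all of $\Mrect$, which is immediate from the coSpitzweck wfs $(\Cof,\AFib)$ in $\M$ together with $\AFib\subseteq\AFib_\Rfrak$.

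Finally, the universal property is straightforward: a left Quillen functor $\Frect\colon\M\to\OpN$ inverting $\Rfrak$-equivalences is automatically left Quillen from $\M_\Rfrak$, since $\Cof$ is unchanged and $\ACof_\Rfrak=\Cof\cap\Eq_\Rfrak$ is sent into acyclic cofibrations of $\OpN$ by hypothesis. The anticipated principal obstacle is precisely the factorization $\ACof_\Rfrak\cdot\Fib_{\Rfrak,\circ}$: the asymmetry between left and right semimodel structures becomes visible, since one cannot assume left properness and yet must produce a left leg that is simultaneously a cofibration and an $\Rfrak$-equivalence, forcing the BF-reflector to intervene at the point-set level rather than merely in the homotopy category.
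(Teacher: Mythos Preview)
Your overall architecture matches the paper's, but there is a genuine gap concerning the key lifting statement, and a couple of slips that matter in the Fresse case.

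\medskip

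\textbf{The missing lemma.} The heart of the factorization $\ACof_\Rfrak\cdot\Fib_{\Rfrak,\circ}$ is knowing that the pulled-back map $\uph\colon\Xrect'\to\Yrect$ (and hence the composite $\uph\cdot\upq$) lies in $\Fib_{\Rfrak,\circ}$. For that you need the implication ``core fibration between fibrant $\Rfrak$-local objects $\Rightarrow$ $\Rfrak$-fibration'', i.e.\ Proposition~\ref{prop_CharacterizationOfCoreFibrationsInLBLwithLocalizator}, which in turn rests on Proposition~\ref{prop_ACofInLBLwithLocalizator}: a cofibration is an $\Rfrak$-equivalence iff it lifts against core fibrations between $\Rfrak$-local objects. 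You never state or prove this; your sketch of the ``forward direction'' of the characterization of $\Rfrak$-fibrant objects appeals to Lemmas~\ref{lem_LeftHomotopyClassesIdentifyCofibrantReplacements} and~\ref{lem_LeftAndRightHomotopiesCoincideSometimes}, but those concern homotopy classes of maps between bifibrant objects and do not produce strict lifts of an arbitrary $\upj\in\Cof\cap\Eq_\Rfrak$ against a core fibration between $\Rfrak$-local objects. The actual argument (Stanculescu, after Bousfield--Friedlander) replaces the naturality square of $\upeta$ at $\upj$ by one with a core fibration on the right, pulls back, and uses the homotopy-pullback Lemma~\ref{lem_Hopullbacks}; this is exactly the ``Goerss--Hopkins style trick'' you allude to, but it is needed \emph{before} the factorization, not only inside it. Without Proposition~\ref{prop_ACofInLBLwithLocalizator} your constructed right leg is only known to be a pullback of a core fibration in $\M$, not an element of $\Fib_{\Rfrak,\circ}$.

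\medskip

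\textbf{Fresse-specific slips.} The deductions $\Fib_\Rfrak\subseteq\Fib$ and $\AFib\subseteq\AFib_\Rfrak$ use $\Fib=\ACof^{\boxslash}$ and $\AFib=\Cof^{\boxslash}$, which are Spitzweck and coSpitzweck axioms respectively, not Fresse ones. The conclusions you draw from them are salvageable: $\Rfrak$-fibrant objects are fibrant by the retract argument (factor $\Xrect\to\mathbb{1}$ as $\ACof\cdot\Fib_\circ$ and lift), and for the $(\Cof,\AFib_{\Rfrak,\circ})$-factorization one only needs $\AFib_\circ\subseteq\Fib_\Rfrak$, which does follow from $\Cof\boxslash\AFib_\circ$. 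Relatedly, saying the converse of the fibrant characterization ``comes out of the factorization construction below'' is circular, since you need to know the target $\Yrect$ is fibrant and $\Rfrak$-local to even begin that construction; the paper proves this direction directly (Proposition~\ref{prop_CharacterizationOfFibrantsInLBLwithLocalizator}) by exhibiting any $\Rfrak$-fibrant $\Xrect$ as a retract of a fibrant replacement of $\Rfrak\Xrect$.
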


Note that the universal property in Definition \ref{defn_LBL} is automatically satisfied.

We will discuss each axiom of semimodel category separately, but first we collect the fundamental facts that made the theorem work.

Since we have defined $\Fib_{\Rfrak}$ as those maps that have right lifting property against $\Cof\cap\Eq_{\Rfrak}$, we need some additional control of core $\Rfrak$-fibrations. 

\begin{prop}\label{prop_ACofInLBLwithLocalizator} A cofibration is an $\Rfrak$-equivalence iff it has llp against core fibrations between $\Rfrak$-local objects.
\end{prop}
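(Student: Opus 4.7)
The plan is to prove each implication separately, relying on the BF-axioms (homotopy invariance of $\Rfrak$ and the fact that $\upeta\Rfrak,\Rfrak\upeta$ are natural equivalences) together with the Fresse factorization axiom for maps into fibrant targets.

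For the forward direction, suppose $\upi\colon A\to B$ is a cofibration and an $\Rfrak$-equivalence, and $\upp\colon X\to Y$ is a core fibration between $\Rfrak$-local objects. First observe that $X$ is fibrant as well (domain of a fibration with fibrant target), so both $X$ and $Y$ are fibrant and $\Rfrak$-local. My strategy is to exploit the naturality square of $\upeta$ at $\upp$: the vertical maps $\upeta_X,\upeta_Y$ are equivalences by $\Rfrak$-locality, while applying $\Rfrak$ to any lifting square for $\upi$ against $\upp$ produces one whose left edge $\Rfrak\upi$ is an equivalence. Combining these pieces yields a lift ``up to homotopy'' in the original square, which I will rectify to a strict lift using a path object for $X$ obtained by Fresse-factoring $X\to X\times X$ (note $X\times X$ is fibrant since $Y$ is, so the factorization axiom applies).

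For the backward direction, I first construct, for any object $Z$, a fibrant $\Rfrak$-local replacement $\rho_Z\colon Z\to\widetilde Z$ that is an $\Rfrak$-equivalence: take $\rho_Z$ to be the composite $Z\xrightarrow{\upeta_Z}\Rfrak Z\xrightarrow{\upj}\widetilde Z$, where $\Rfrak Z\xrightarrow{\upj}\widetilde Z\to\mathbb{1}$ is the Fresse factorization of $\Rfrak Z\to\mathbb{1}$ into an acyclic cofibration followed by a core fibration. Then $\widetilde Z$ is fibrant; it is $\Rfrak$-local because $\Rfrak Z$ is $\Rfrak$-local (BF-axiom $\upeta\Rfrak\in\Eq$) and $\upj$ is an equivalence, so naturality of $\upeta$ and homotopy invariance of $\Rfrak$ force $\upeta_{\widetilde Z}\in\Eq$ via 2-out-of-3. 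Given $\upi$ with the stated llp, factor the composite $\rho_B\circ\upi\colon A\to\widetilde B$ by Fresse (since $\widetilde B$ is fibrant) as $A\xrightarrow{\Cof}C\xrightarrow{\AFib_{\circ}}\widetilde B$; then $C\to\widetilde B$ is a core acyclic fibration (in particular a core fibration) between $\Rfrak$-local objects, since $C$ is equivalent to $\widetilde B$ and $\Rfrak$-locality is stable under equivalences as above. The llp hypothesis yields a lift $s\colon B\to C$ with $s\upi=(A\to C)$ and $(C\to\widetilde B)s=\rho_B$. By 2-out-of-3 applied to the triangle $\rho_B=(C\to\widetilde B)\circ s$, the map $s$ is an $\Rfrak$-equivalence; performing the analogous construction on $A$ and comparing diagrams then allows one to conclude that $\Rfrak\upi$ is an equivalence.

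The main technical obstacle is twofold. First, one must repeatedly verify that $\Rfrak$-locality is preserved under the various (co)fibrant replacements invoked; this is a recurring 2-out-of-3 argument combining homotopy invariance of $\Rfrak$ with the naturality of $\upeta$. Second, and more subtly, the concluding step of the backward direction requires transporting the $\Rfrak$-equivalence property of the auxiliary map $s$ back to the original cofibration $\upi$. I expect the cleanest treatment to proceed through the characterization of $\Rfrak$-equivalences in $\Ho\M$ as those maps inducing bijections on $\mathrm{Hom}_{\Ho\M}(-,Z)$ for every $\Rfrak$-local $Z$—which itself follows formally from the fact that $\Rfrak$ exhibits the subcategory of $\Rfrak$-local objects as reflective in $\Ho\M$.
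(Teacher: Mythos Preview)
Your backward direction contains a genuine gap. After obtaining the lift $s\colon B\to C$ with $(C\to\widetilde B)\circ s=\rho_B$ and $s\circ\upi=(A\to C)$, you correctly deduce that $s$ is an $\Rfrak$-equivalence. But then you need $(A\to C)$ to be an $\Rfrak$-equivalence to finish via 2-out-of-3, and your factorization gives no such information: composing $(A\to C)$ with the equivalence $C\to\widetilde B$ yields $\rho_B\circ\upi$, and $\Rfrak(\rho_B\circ\upi)$ is an equivalence iff $\Rfrak(\upi)$ is---which is precisely what you are trying to prove. The phrase ``performing the analogous construction on $A$'' does not resolve this circularity. The correct setup is to build a commuting square
\[
\begin{tikzcd}
A \ar[r,"\rho_A"]\ar[d,"\upi"'] & \widetilde A \ar[d,"\widetilde\upi"]\\
B \ar[r,"\rho_B"'] & \widetilde B
\end{tikzcd}
\]
with $\rho_A,\rho_B$ both $\Rfrak$-equivalences and $\widetilde\upi$ a core fibration between fibrant $\Rfrak$-local objects (factor $\Rfrak\upi$ suitably after fibrant replacement). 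The lift $s\colon B\to\widetilde A$ then satisfies $s\upi=\rho_A$ and $\widetilde\upi s=\rho_B$; applying $\Rfrak$ to both identities shows $\Rfrak(s)$ is simultaneously split monic and split epic in $\Ho\M$, hence an equivalence, and 2-out-of-3 on $s\upi=\rho_A$ finishes. This is essentially the argument the paper defers to (Stanculescu's Lemma 2.3).

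Your forward direction is also under-argued. Producing a lift in $\Ho\M$ from the BF-axioms is fine, but ``rectify to a strict lift using a path object for $X$'' hides a non-trivial step: passing from a diagonal filler up to homotopy to a strict one requires a careful homotopy-extension/covering-homotopy dance (one must first adjust so that the lower triangle commutes strictly, then correct the upper triangle \emph{relative to} the lower one), and in the Fresse right-semimodel setting one has to check that all factorizations invoked have fibrant target. The paper instead follows Stanculescu's Lemma 2.4: one replaces $\upp$ by pulling back a fibrant replacement of $\Rfrak\upp$ along $\upeta_Y$ and uses the homotopy pullback Lemma \ref{lem_Hopullbacks} to exhibit the original $\upp$ as a retract (in the arrow category) of a fibration against which $\upi$ lifts by the axioms of $\M$. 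This route stays entirely at the point-set level and avoids the rectification problem altogether.
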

\begin{proof} The proofs of \cite[Lemmas 2.3 and 2.4]{stanculescu_note_2008} work for semimodel structures since the required replacements are constructed using $\big(\ACof,\Fib_{\circ}\big)$-factorizations of maps with fibrant target.
\end{proof}

 In the cited reference, the following classical lemma is not mentioned but it plays a fundamental role in \cite[Lemmas 2.3 and 2.4]{stanculescu_note_2008}. We also use it in the proof of Theorem \ref{thm_LBLfromLocalizator}.

\begin{lem}\label{lem_Hopullbacks} In a right semimodel category, consider a pullback square
	$$
	\begin{tikzcd}[ampersand replacement=\&]
	\Xrect'\ar[r, "\upf'"]\ar[d]\ar[rd, phantom, "\lrcorner"] \& \Yrect'\ar[d, twoheadrightarrow]\\
	\Xrect\ar[r, "\upf"'] \& \Yrect
	\end{tikzcd}
	$$
	between fibrant objects. Then, if $\upf$ is an equivalence, its base change $\upf'$ is so. 
\end{lem}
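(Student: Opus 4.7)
The plan is to reduce, via a factorization of $\upf$, to the case of pulling back an acyclic cofibration between fibrant objects, and then to treat that reduced case by a classical strong-deformation-retract argument.

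Since $\Yrect$ is fibrant, the Fresse factorization axiom of Definition \ref{defn_FresseSemimodel} supplies $\upf=p\circ i$ with $i:\Xrect\to\Zrect$ an acyclic cofibration and $p:\Zrect\twoheadrightarrow\Yrect$ a core fibration; then $\Zrect$ is fibrant, and 2-out-of-3 on the equivalence $\upf$ forces $p\in\AFib_{\circ}$. Pulling back the factorization along $\Yrect'\twoheadrightarrow\Yrect$ yields $\upf'=p'\circ i'$ with $\Zrect':=\Zrect\times_{\Yrect}\Yrect'$ fibrant. The pullback $p'$ has fibrant target $\Yrect'$ and inherits the right lifting property against $\Cof$ from $p$ (lifting problems transpose across the pullback square), so $p'\in\AFib_{\circ}$ and in particular $p'$ is an equivalence. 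By 2-out-of-3 applied to $\upf'=p'\circ i'$, it therefore suffices to show that $i'$ is an equivalence, where $i'$ is the pullback of the acyclic cofibration $i$ between fibrant objects along the fibration $\Zrect'\twoheadrightarrow\Zrect$ (between fibrant objects).

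This reduced claim is the main obstacle. The strategy is to exhibit $i$ as a strong deformation retract and then transport the data along the fibration. A retraction $r:\Zrect\to\Xrect$ with $r\circ i=\id_{\Xrect}$ is obtained from the lifting axiom $\ACof\boxslash\Fib_{\circ}$ applied to $i$ against the core fibration $\Xrect\to\mathbb{1}$ (valid because $\Xrect$ is fibrant). Factoring the diagonal of the fibrant object $\Zrect\times\Zrect$ supplies a good path object $\Zrect\xrightarrow{j}P\Zrect\xrightarrow{(d_0,d_1)}\Zrect\times\Zrect$, and a right homotopy $H:\Zrect\to P\Zrect$ from $\id_{\Zrect}$ to $i\circ r$ that is constant on $\Xrect$ is produced by lifting $i$ against the core fibration $(d_0,d_1)$ in the square with top edge $j\circ i:\Xrect\to P\Zrect$ and bottom edge $(\id_{\Zrect},i\circ r):\Zrect\to\Zrect\times\Zrect$. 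Transporting $(r,H)$ along the fibration $\Zrect'\twoheadrightarrow\Zrect$ --- or, equivalently, recognising that the full subcategory of fibrant objects of $\M$ satisfies K.\;Brown's axioms for a category of fibrant objects and invoking his classical theorem on pullbacks --- produces analogous data $(r',H')$ on $i'$, exhibiting it as a homotopy equivalence between fibrant objects and hence as an equivalence.
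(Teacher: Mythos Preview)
Your argument is correct, but it takes a genuinely different route from the paper's. The paper gives a three-line proof: it exhibits $\upf'$ as the map induced on pullbacks by a levelwise equivalence between two Reedy-fibrant cospans (namely $\Xrect\xrightarrow{\upf}\Yrect\twoheadleftarrow\Yrect'$ and $\Yrect=\Yrect\twoheadleftarrow\Yrect'$, connected by $(\upf,\id,\id)$), and concludes immediately since pullbacks of Reedy-fibrant cospans compute homotopy pullbacks. Your approach instead factors $\upf$ and reduces to two cases: pullback of a core acyclic fibration (handled by the retract argument --- note that your inference ``RLP against $\Cof$ with fibrant target $\Rightarrow p'\in\AFib_\circ$'' tacitly uses a $(\Cof,\AFib_\circ)$-factorization of $p'$ plus retract closure of $\Eq$ and $\Fib$), and pullback of an acyclic cofibration between fibrant objects along a fibration. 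For the latter you invoke K.\;Brown's theorem for categories of fibrant objects, which is legitimate here: the fibrant subcategory of a Fresse right semimodel category satisfies Brown's axioms because for maps with fibrant target one has $\Fib_\circ=\ACof^{\boxslash}$ and $\AFib_\circ=\Cof^{\boxslash}$ via the retract argument, giving the required closure under pullback and the path-object factorizations.

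Two remarks. First, once you are willing to invoke Brown's right-properness theorem for categories of fibrant objects, the preliminary factorization of $\upf$ is superfluous: that theorem already gives the full statement directly. Second, your ``transport $(r,H)$ along the fibration'' is only a sketch --- the retraction $r$ does not pull back to a retraction on the nose since $i\circ r\neq\id_\Zrect$ --- so the appeal to Brown is doing the real work. What your approach buys is that it avoids any implicit reliance on Reedy theory in the semimodel setting; what the paper's approach buys is brevity.
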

\begin{proof}
See the proof of \cite[Proposition 1.19]{barwick_left_2010}.
\end{proof}

\begin{prop}\label{prop_CharacterizationOfFibrantsInLBLwithLocalizator} An object is $\Rfrak$-fibrant iff it is $\Rfrak$-local and fibrant.
\end{prop}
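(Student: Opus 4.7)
My plan is to handle the two directions separately, after a brief preliminary showing that the terminal object $\mathbb{1}$ is $\Rfrak$-local. For the preliminary, I would observe that the unique map $r\colon\Rfrak(\mathbb{1})\to\mathbb{1}$ is a retraction of $\upeta_{\mathbb{1}}$ by terminality, so $\Rfrak(r)$ is a left inverse of the equivalence $\Rfrak(\upeta_{\mathbb{1}})$ and is therefore itself an equivalence; naturality of $\upeta$ on $r$ identifies $\upeta_{\mathbb{1}}\circ r$ with $\Rfrak(r)\circ\upeta_{\Rfrak(\mathbb{1})}$, a composition of equivalences, and a short argument manipulating this one-sided inverse (together with $r\circ\upeta_{\mathbb{1}}=\id$) then yields that $\upeta_{\mathbb{1}}$ itself is an equivalence.

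With this in hand, the direction ($\Leftarrow$) is immediate: if $\Xrect$ is fibrant and $\Rfrak$-local, then $\Xrect\to\mathbb{1}$ is a core fibration between $\Rfrak$-local objects, so Proposition~\ref{prop_ACofInLBLwithLocalizator} produces liftings against every map in $\Cof\cap\Eq_{\Rfrak}$, i.e.\ $\Xrect\to\mathbb{1}\in\Fib_{\Rfrak}$.

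For ($\Rightarrow$), I would first deduce fibrancy of $\Xrect$ from the fact that $\Rfrak$ is homotopical and hence $\ACof\subseteq\Cof\cap\Eq_{\Rfrak}$: applying a Fresse $\ACof/\Fib_{\circ}$-factorization to $\Xrect\to\mathbb{1}$ (which exists since $\mathbb{1}$ is fibrant) and splitting off the $\ACof$ part using $\Xrect\to\mathbb{1}\in\ACof^{\boxslash}$ exhibits $\Xrect\to\mathbb{1}$ as a retract of an element of $\Fib_{\circ}$. For $\Rfrak$-locality, my strategy is to realise $\Xrect$ as a retract of a known $\Rfrak$-local object via a cofibration that is an $\Rfrak$-equivalence. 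Concretely, I would take a fibrant replacement $\alpha\colon\Rfrak\Xrect\xrightarrow{\ACof}\Rrect$ of $\Rfrak\Xrect$ and apply the Fresse $\Cof/\AFib_{\circ}$-factorization to $\alpha\circ\upeta_{\Xrect}\colon\Xrect\to\Rrect$, producing $\Xrect\xrightarrow{j}\Zrect\xrightarrow{q}\Rrect$. Since $\upeta_{\Xrect}$, $q$ and $\alpha$ are all $\Rfrak$-equivalences (the first because $\Rfrak(\upeta_{\Xrect})$ is an equivalence by the BF-reflector axioms), $2$-out-of-$3$ forces $j\in\Cof\cap\Eq_{\Rfrak}$; $\Rfrak$-fibrancy of $\Xrect$ then supplies a retraction $s\colon\Zrect\to\Xrect$ of $j$; and $\Zrect$ is $\Rfrak$-local because it is equivalent, through $q$ and $\alpha$, to $\Rfrak\Xrect$, which is $\Rfrak$-local by hypothesis on the BF-reflector.

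The finishing stroke is a retract argument: naturality of $\upeta$ on $j$ and $s$ assembles a two-row commutative diagram exhibiting $\upeta_{\Xrect}$ as a retract of $\upeta_{\Zrect}$ in the arrow category, both horizontal composites being the identity thanks to $s\circ j=\id_{\Xrect}$. As equivalences are closed under retracts and $\upeta_{\Zrect}$ is an equivalence, $\upeta_{\Xrect}$ must also be an equivalence, so $\Xrect$ is $\Rfrak$-local. The delicate points I anticipate are the preliminary claim that $\mathbb{1}$ is $\Rfrak$-local (squeezing an equivalence out of the retraction data plus the idempotency information) and the bookkeeping in the retract diagram; everything else reduces to a direct application of Proposition~\ref{prop_ACofInLBLwithLocalizator} and the Fresse factorization axioms.
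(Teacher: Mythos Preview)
Your proposal is correct and follows essentially the same route as the paper: the $(\Leftarrow)$ direction is the same appeal to Proposition~\ref{prop_ACofInLBLwithLocalizator}, and for $(\Rightarrow)$ both you and the paper realise $\Xrect$ as a retract of an object equivalent to $\Rfrak\Xrect$ by factoring $\Xrect\to(\text{fibrant replacement of }\Rfrak\Xrect)$ and splitting the $\Cof\cap\Eq_{\Rfrak}$ factor via $\Rfrak$-fibrancy. Two small remarks: your separate deduction of fibrancy is redundant, since the object $\Zrect$ produced in your $\Rfrak$-locality step already maps to the fibrant $\Rrect$ by an $\AFib_{\circ}$, hence is fibrant, and the single retract $\Xrect\hookrightarrow\Zrect\twoheadrightarrow\Xrect$ gives both conclusions at once (this is exactly the paper's compressed argument); and your preliminary that $\mathbb{1}$ is $\Rfrak$-local is correct and makes the logic self-contained here, whereas the paper defers this observation to the proof of Theorem~\ref{thm_LBLfromLocalizator} (the cleanest way to finish your preliminary is the same retract trick you use at the end: $\upeta_{\mathbb{1}}$ is a retract of $\upeta_{\Rfrak\mathbb{1}}$ in the arrow category via $\upeta_{\mathbb{1}}$ and $r$).
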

\begin{proof}
	The ``if" direction is a consequence of Proposition \ref{prop_ACofInLBLwithLocalizator}. For the sufficiency, note that a $\Rfrak$-fibrant object $\Xrect$ has the rlp against $\Cof\cap\Eq_{\Rfrak}$. Thus, $\Xrect$ is a retract of a fibrant replacement of $\Rfrak\Xrect$, and so it is $\Rfrak$-local and fibrant.
\end{proof}

\begin{prop}\label{prop_CharacterizationOfCoreFibrationsInLBLwithLocalizator} A map is a core $\Rfrak$-fibration iff it is a core fibration between $\Rfrak$-local objects.
\end{prop}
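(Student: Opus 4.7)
The plan is to deduce both implications by combining Proposition \ref{prop_ACofInLBLwithLocalizator} with the characterization of $\Rfrak$-fibrant objects in Proposition \ref{prop_CharacterizationOfFibrantsInLBLwithLocalizator}. The $(\Leftarrow)$ direction will be an essentially tautological unpacking, whereas the $(\Rightarrow)$ direction requires promoting the source of a core $\Rfrak$-fibration to an $\Rfrak$-local object, which is where the real work lies.

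For $(\Leftarrow)$: if $\upp\colon\Xrect\to\Yrect$ is a core fibration between $\Rfrak$-local objects, then $\Yrect$ is fibrant and $\Rfrak$-local, hence $\Rfrak$-fibrant by Proposition \ref{prop_CharacterizationOfFibrantsInLBLwithLocalizator}. Moreover, Proposition \ref{prop_ACofInLBLwithLocalizator} implies that every cofibration which is a $\Rfrak$-equivalence has llp against $\upp$, so $\upp\in\Fib_{\Rfrak}$ and hence $\upp$ is a core $\Rfrak$-fibration.

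For $(\Rightarrow)$: let $\upp\colon\Xrect\to\Yrect$ be a core $\Rfrak$-fibration. Since $\Rfrak$ is homotopical, $\Eq\subseteq\Eq_{\Rfrak}$ and thus $\ACof\subseteq\Cof\cap\Eq_{\Rfrak}$; passing to rlp yields $\Fib_{\Rfrak}\subseteq\Fib$, so $\upp$ is a fibration. As $\Yrect$ is $\Rfrak$-fibrant, it is both fibrant and $\Rfrak$-local, so $\upp$ is already a core fibration. It remains to show that $\Xrect$ is $\Rfrak$-local, and the clean way to do so is to exhibit $\Xrect$ as $\Rfrak$-fibrant via composition and then apply Proposition \ref{prop_CharacterizationOfFibrantsInLBLwithLocalizator}. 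The terminal object $\mathbb{1}$ is trivially $\Rfrak$-fibrant (the identity $\mathbb{1}\to\mathbb{1}$ is vacuously a $\Rfrak$-fibration), so by Proposition \ref{prop_CharacterizationOfFibrantsInLBLwithLocalizator} it is $\Rfrak$-local and fibrant. Hence $\Yrect\to\mathbb{1}$ is a fibration between $\Rfrak$-local objects with fibrant target, and Proposition \ref{prop_ACofInLBLwithLocalizator} gives it rlp against $\Cof\cap\Eq_{\Rfrak}$, i.e. $\Yrect\to\mathbb{1}\in\Fib_{\Rfrak}$. Composing with $\upp\in\Fib_{\Rfrak}$, and using closure of rlp classes under composition, the structural map $\Xrect\to\mathbb{1}$ belongs to $\Fib_{\Rfrak}$, so $\Xrect$ is $\Rfrak$-fibrant and therefore $\Rfrak$-local.

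The principal obstacle, which the argument above sidesteps, is the temptation to proceed by direct retraction in the style of the proof of Proposition \ref{prop_CharacterizationOfFibrantsInLBLwithLocalizator}: factor $\Xrect\to\Rfrak\Xrect$ through a fibrant replacement to obtain $\Xrect\xrightarrow{\upj}\widetilde{\Xrect}\to R(\Rfrak\Xrect)$ with $\upj\in\Cof\cap\Eq_{\Rfrak}$ and $\widetilde{\Xrect}$ an $\Rfrak$-local fibrant object, and then try to exhibit $\Xrect$ as a retract of $\widetilde{\Xrect}$ by lifting against $\upp$; this, however, requires first constructing a strict map $\widetilde{\Xrect}\to\Yrect$ extending $\upp$ along $\upj$, which is not at all immediate. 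Routing through $\mathbb{1}$ and exploiting stability of rlp classes under composition avoids this construction entirely.
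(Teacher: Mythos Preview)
Your $(\Leftarrow)$ direction and your composition argument showing $\Xrect$ is $\Rfrak$-fibrant (hence $\Rfrak$-local) are both correct. The gap is in the step ``passing to rlp yields $\Fib_{\Rfrak}\subseteq\Fib$''. This would be fine in a Spitzweck right semimodel category, where $(\ACof,\Fib)$ is a genuine wfs and hence $\Fib=\ACof^{\boxslash}$. But the ambient hypothesis here is only Fresse (or coSpitzweck), where one merely has $\ACof\boxslash\Fib_{\circ}$; the class $\Fib$ is \emph{not} characterized as $\ACof^{\boxslash}$, so from $\ACof\subseteq\Cof\cap\Eq_{\Rfrak}$ you only obtain $\Fib_{\Rfrak}\subseteq\ACof^{\boxslash}$, which need not be contained in $\Fib$.

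The paper's fix is precisely the retract argument you set aside, but applied to $\upp$ itself rather than to a map $\Xrect\to\widetilde{\Xrect}$. Since $\Yrect$ is $\Rfrak$-fibrant and hence fibrant, the Fresse axioms allow you to factor $\upp$ as $\Xrect\xrightarrow{\upj}\Zrect\xrightarrow{\upq}\Yrect$ with $\upj\in\ACof$ and $\upq\in\Fib_{\circ}$. Now $\upj\in\ACof\subseteq\Cof\cap\Eq_{\Rfrak}$, so $\upj\boxslash\upp$, and the retract argument exhibits $\upp$ as a retract of $\upq$; since $\Fib$ is closed under retracts, $\upp\in\Fib$, and as $\Yrect$ is fibrant, $\upp\in\Fib_{\circ}$. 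Your composition argument then supplies the $\Rfrak$-locality of $\Xrect$. The obstacle you describe in your final paragraph does not arise because there is no need to build a map $\widetilde{\Xrect}\to\Yrect$: the factorization is of $\upp$ over $\Yrect$ from the start.
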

\begin{proof}
	The ``if" direction is a direct consequence of Propositions \ref{prop_ACofInLBLwithLocalizator} and \ref{prop_CharacterizationOfFibrantsInLBLwithLocalizator}. For the sufficiency, show that every core $\Rfrak$-fibration is a retract of a core fibration by factoring it into an acyclic cofibration followed by a core fibration. 
\end{proof}

These preliminary results put us in place for the

\begin{proof}[Proof of Theorem \ref{thm_LBLfromLocalizator}.]$\,$
	
\textit{Underlying structured homotopical category.} By direct inspection.

\textit{Lifting axioms.} By definition $\Cof\cap\Eq_{\Rfrak}\boxslash\Fib_{\Rfrak}$. It remains to check the other lifting axiom: $\Cof\boxslash\Fib_{\Rfrak,\circ}\cap\Eq_{\Rfrak}$. Let us assume first that $\M$ is Fresse right semimodel category. Since we know that $\Cof\boxslash\AFib_{\circ}$, we can obtain the required lifting by showing that we have an inclusion  $\Fib_{\Rfrak,\circ}\cap\Eq_{\Rfrak}\subseteq\AFib_{\circ}$. As usual, this can be proven by factorizing as $(\Cof,\AFib_{\circ})$, which is allowed by Proposition \ref{prop_CharacterizationOfFibrantsInLBLwithLocalizator}, and applying the retract argument.

When $\M$ is coSpitzweck right semimodel category, one has even an equality between the classes  $\Fib_{\Rfrak}\cap\Eq_{\Rfrak}=\AFib$ without core assumptions by the exact same argument.

\textit{Factorization axioms.}
 Given a map $\upg\colon\Xrect\to\Yrect$ with $\Rfrak$-fibrant target, we should factor it in two ways:
\begin{enumerate}
	\item[(i)]  $\upg=\upp\cdot\upj$ where $\upj$ belongs to $\Cof$ and $\upp$ belongs to $\Fib_{\Rfrak,\circ}\cap\Eq_{\Rfrak}$. Just use the factorization $\big(\Cof,\AFib_{\circ}\big)$ and apply the  characterization of $\Rfrak$-fibrant objects and core $\Rfrak$-fibrations (Propositions \ref{prop_CharacterizationOfFibrantsInLBLwithLocalizator} and \ref{prop_CharacterizationOfCoreFibrationsInLBLwithLocalizator}). In coSpitzweck's case, this factorization exists without hypothesis on the target and we apply the identity $\Fib_{\Rfrak}\cap\Eq_{\Rfrak}=\AFib$ to deduce the axiom.
	\item[(ii)]  $\upg=\upp\cdot\upj$ where $\upj$ belongs to $\Cof\cap\Eq_{\Rfrak}$ and $\upp$ belongs to $\Fib_{\Rfrak,\circ}$. Note that the BF-reflector $(\Rfrak,\upeta)$ yields a commutative square
	$$
	\begin{tikzcd}[ampersand replacement=\&]
	\Xrect \ar[d, "\upg"'] \ar[r] \& \Rfrak\Xrect\ar[d,"\Rfrak\upg"]\; \\
    \Yrect \ar[r, "\upeta"'] \& \Rfrak\Yrect
	\end{tikzcd}.       
	$$
	Up to taking $\big(\ACof,\Fib_{\circ}\big)$-factorizations, we can assume that $\Rfrak\upg$ is a fibration between $\Rfrak$-local and fibrant objects. By Proposition \ref{prop_CharacterizationOfCoreFibrationsInLBLwithLocalizator}, $\Rfrak\upg$ is a core $\Rfrak$-fibration, so pulling it back along $\upeta$ gives us a commutative diagram
	$$
	\begin{tikzcd}[ampersand replacement=\&]
	\Xrect \ar[r]\ar[d, "\upg"'] \& \Xrect' \ar[d, "\uph"'] \ar[r, "\upeta'"] \ar[rd, phantom, "\lrcorner"]\& \Rfrak\Xrect\ar[d,"\Rfrak\upg"]\; \\
	\Yrect \ar[r, equal] \& \Yrect \ar[r, "\upeta"'] \& \Rfrak\Yrect
	\end{tikzcd},
	$$
	where $\uph\colon \Xrect'\to\Yrect$ is a core $\Rfrak$-fibration. Note also that the top map in the left square  $\Xrect\to\Xrect'$ is an $\Rfrak$-equivalence by 2-out of-3 since $\upeta'\colon\Xrect'\to\Rfrak\Xrect$ is an equivalence (see Lemma \ref{lem_Hopullbacks}). Finally, factor $\Xrect\to\Xrect'$ as $\upq\cdot\upj$ 
	with $\upj\in\Cof$ and $\upq\in\AFib_{\circ}$ using that $\Xrect'$ is fibrant. Hence, we obtain the factorization  $\upg=(\uph\cdot\upq)\cdot\upj$ where $\upj$ belongs to $\Cof\cap\Eq_{\Rfrak}$ by 2-out of-3 and $(\uph\cdot\upq)$ belongs to $\Fib_{\Rfrak,\circ}$ by Proposition \ref{prop_CharacterizationOfCoreFibrationsInLBLwithLocalizator}. 
\end{enumerate}

\textit{Fibrancy of terminal object.}
 By the characterization of $\Rfrak$-fibrant objects (Proposition \ref{prop_CharacterizationOfFibrantsInLBLwithLocalizator}), it remains to check that $\mathbb{1}$ is $\Rfrak$-local. This fact follows by observing that $\mathbb{1}$ is a retract of $\Rfrak\mathbb{1}$. 

\end{proof}

\begin{rem} Note that in particular we have established that
the fibrant objects and core fibrations of $\M_{\Rfrak}$ are characterized in terms of the BF-reflector $\Rfrak$ (Propositions \ref{prop_CharacterizationOfFibrantsInLBLwithLocalizator} and \ref{prop_CharacterizationOfCoreFibrationsInLBLwithLocalizator}).
\end{rem}

One open question raised by Theorem \ref{thm_LBLfromLocalizator} is if the Bousfield localization inherits additional structures or properties when the original (semi)model category has or satisfies them. We prove a result in this direction, Proposition \ref{prop_EnrichmentForLocalizator}, but first we need a preliminary lemma.
\begin{lem}\label{lem_CharacterizationOfRequivalencesWithEnrichment}
Under the hypotheses of Theorem \ref{thm_LBLfromLocalizator} and assuming  $\M$ is a  $\V$-semimodel category and $(\Rfrak,\upeta)$ is $\V$-enriched, a map $\upf$ is an $\Rfrak$-equivalence iff $\mathbb{R}\underline{\M}(\upf,\Xrect)$ is an equivalence for any $\Rfrak$-local object $\Xrect$, where $\underline{\M}\colon \M^{\text{op}}\times\M\to\V$ denotes the $\V$-enrichment of $\M$.
\end{lem}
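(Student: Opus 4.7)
My plan is to reduce both implications to statements about $\Ho\M$-mapping sets and then invoke \cite[Proposition 1.1]{casacuberta_structures_2000}, which identifies $\upeta$ with the unit of an idempotent monad on $\Ho\M$ whose algebras are precisely the $\Rfrak$-local objects. The enrichment will enter via derived Yoneda in $\V$ combined with the derived tensor--hom adjunction granted by the $\V$-semimodel structure (Proposition \ref{prop_CoreQuillenAdjOfTwoVariablesYieldAdjunctionsOfTwoVariables}).

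As a preliminary (set-level) step, I would show that a map $\upg\colon\Crect\to\Drect$ is an $\Rfrak$-equivalence if and only if $\upg^*\colon\Ho\M(\Drect,\Zrect)\to\Ho\M(\Crect,\Zrect)$ is a bijection for every $\Rfrak$-local $\Zrect$. The naturality square of $\upeta$ under the pair $(\Rfrak\upg,\upg)$ reduces the forward implication to the fact that $\upeta_\Crect^*$ is a bijection on $\Ho\M(-,\Zrect)$ for $\Rfrak$-local $\Zrect$, which is standard idempotent-monad reflection. Conversely, applying the assumption to $\Zrect=\Rfrak\Crect$ and $\Zrect=\Rfrak\Drect$ (both $\Rfrak$-local since $\upeta\Rfrak$ is a natural equivalence) produces a two-sided inverse of $\Rfrak\upg$ in $\Ho\M$, whence $\Rfrak\upg$ is an equivalence.

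Next, derived Yoneda in $\Ho\V$ together with the derived tensor--hom adjunction translates ``$\mathbb{R}\underline{\M}(\upf,\Xrect)$ is an equivalence in $\V$'' into ``for every $\Vrect\in\V$, the map $(\Vrect\otimes^{L}\upf)^*$ is a bijection on $\Ho\M(-,\Xrect)$''. Imposing this for every $\Rfrak$-local $\Xrect$ and using the preliminary step, the lemma becomes equivalent to the following: $\upf$ is an $\Rfrak$-equivalence if and only if $\Vrect\otimes^{L}\upf$ is an $\Rfrak$-equivalence for every $\Vrect\in\V$. Specializing at $\Vrect=\mathbb{1}_{\V}$ yields the $(\Leftarrow)$ direction for free. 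For $(\Rightarrow)$, I would use the $\V$-enrichment of $\Rfrak$ to build a comparison map $\Vrect\otimes\Rfrak(-)\to\Rfrak(\Vrect\otimes -)$, and show via the $\V$-naturality of $\upeta$, together with the fact that $\Vrect\otimes^{L}-$ preserves equivalences, that $\Rfrak(\Vrect\otimes^{L}\upf)$ is an equivalence whenever $\Rfrak\upf$ is.

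The main obstacle is exactly this last stability step: verifying that $\Rfrak$-equivalences are preserved by $\Vrect\otimes^{L}-$ for every $\Vrect\in\V$. It hinges on how the $\V$-enrichment of $\Rfrak$ interacts with $\upeta$; one expects the enrichment datum to automatically promote $\upeta$ to a $\V$-enriched natural transformation, which reduces the claim to a short diagram chase combined with the 2-out-of-3 property of $\Eq_{\Rfrak}$. Clarifying this coherence is the only nontrivial bookkeeping in the argument.
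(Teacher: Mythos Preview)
Your reverse implication is correct and in fact tidier than the paper's: specializing the tensor at the monoidal unit, together with your set-level preliminary step, immediately gives that $\upf$ is an $\Rfrak$-equivalence, and you never need to invoke the already-constructed semimodel structure $\M_{\Rfrak}$ as the paper does.

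The forward implication, however, has a genuine gap. After your (correct) translation, proving it amounts to showing that $\Rfrak$-equivalences are closed under $\Vrect\otimes^{L}(-)$ for every $\Vrect\in\V$. But by your own Yoneda-plus-adjunction step, that closure statement is \emph{equivalent} to the forward direction of the lemma, so the reformulation has not gained anything; you still need an independent argument. The sketch you offer---build the comparison $c_{\Arect}\colon\Vrect\otimes\Rfrak\Arect\to\Rfrak(\Vrect\otimes\Arect)$ from the enrichment, use $\V$-naturality of $\upeta$ to get $c_{\Arect}\circ(\Vrect\otimes\upeta_{\Arect})=\upeta_{\Vrect\otimes\Arect}$, and finish by $2$-out-of-$3$---does not close. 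Every attempt to extract from this triangle that $\Rfrak(\Vrect\otimes\upf)$ is an equivalence requires knowing that $\Vrect\otimes\upeta_{\Arect}$ (or $c_{\Arect}$) is an $\Rfrak$-equivalence, which is an instance of the very closure you are trying to prove. The same circularity appears if you instead try to show that derived cotensors $\Zrect^{\Vrect}$ of $\Rfrak$-local objects are $\Rfrak$-local.

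The paper avoids this trap by working directly at the level of hom-objects rather than passing through tensors: the $\V$-enriched action of $\Rfrak$ on hom-objects, combined with the equivalence $\upeta_{\Xrect}\colon\Xrect\simeq\Rfrak\Xrect$ for $\Rfrak$-local $\Xrect$, exhibits $\mathbb{R}\underline{\M}(\upf,\Xrect)$ as a \emph{retract} in $\Ho\V$ of $\mathbb{R}\underline{\M}(\Rfrak\upf,\Xrect)$, and the latter is an equivalence because $\Rfrak\upf$ is. The retraction is precisely what $\V$-naturality of $\upeta$ gives you (the identity $\upeta_{\Arect}^{*}\circ\Rfrak=(\upeta_{\Xrect})_{*}$ on hom-objects), and this is the one concrete input your argument is missing.
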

\begin{proof}
	The direct implication is a consequence of the following (homotopy) commutative diagram and the retract closure of equivalences
	$$
    \begin{tikzcd}[ampersand replacement=\&]
    \id\colon\mathbb{R}\underline{\M}(\Arect,\Xrect)\ar[r, "\Rfrak"] 
    \ar[d,"\upf^*"']\& \mathbb{R}\underline{\M}(\Rfrak\Arect,\Rfrak\Xrect)\overset{\upeta_*}{\simeq}\mathbb{R}\underline{\M}(\Rfrak\Arect,\Xrect) \ar[r, "\upeta^*"]\ar[d,"\Rfrak\upf^*"', shift left=15] \& \mathbb{R}\underline{\M}(\Arect,\Xrect)\ar[d,"\upf^*"]\\
    \id\colon\mathbb{R}\underline{\M}(\Brect,\Xrect)\ar[r, "\Rfrak"']
     \& \mathbb{R}\underline{\M}(\Rfrak\Brect,\Rfrak\Xrect)\underset{\upeta_*}{\simeq}\mathbb{R}\underline{\M}(\Rfrak\Brect,\Xrect) \ar[r, "\upeta^*"'] \& \mathbb{R}\underline{\M}(\Brect,\Xrect)
    \end{tikzcd}.
	$$
	Note that $\Rfrak$ being homotopical and compatible with the $\V$-mapping object $\underline{\M}$, which is core Quillen bifunctor, makes possible to consider such diagram.
	
	For the sufficiency, we need to check that $\Rfrak\upf$ is an equivalence. By Yoneda Lemma, one can prove this by showing that $\Ho\M_{\Rfrak}(\Rfrak\upf,\Zrect)$ is a bijection for any fibrant  $\Zrect\in\M_{\Rfrak}$. Since the inclusion $\Ho\M_{\Rfrak}\hookrightarrow\Ho\M$ is fully faithful, one can compute these hom-sets in $\Ho\M$. Using the $\V$-enrichment of $\M$, it is easy to see that we are done if $\mathbb{R}\underline{\M}(\Rfrak\upf,\Zrect)$ is an equivalence for any fibrant  $\Zrect\in\M_{\Rfrak}$. Apply  Proposition \ref{prop_CharacterizationOfFibrantsInLBLwithLocalizator} to conclude the proof.
\end{proof}

\begin{prop}\label{prop_EnrichmentForLocalizator}
Under the hypotheses of Theorem \ref{thm_LBLfromLocalizator}, if $\M$ is a  $\V$-semimodel category and $(\Rfrak,\upeta)$ is $\V$-enriched, then $\M_{\Rfrak}$ is also a $\V$-semimodel category.
\end{prop}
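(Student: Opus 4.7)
The plan is to verify the core Quillen adjunction of two variables axioms (Definition \ref{defn_CoreQBifunctor}) for the tensor/cotensor/enrichment on $\M_{\Rfrak}$ inherited from $\M$. Since the cofibrations of $\M_{\Rfrak}$ and $\M$ coincide, the only new content lies in the acyclicity clauses, and the only genuinely new case to treat is the following: for a core cofibration $i\colon K\to L$ in $\V$ and a core cofibration $j\colon A\to B$ in $\M$ which is an $\Rfrak$-equivalence, the pushout-product $i\square j$ should be an $\Rfrak$-equivalence. Indeed, the pushout-product of two core cofibrations is already a core cofibration by the original enrichment, and is acyclic in $\M$, hence an $\Rfrak$-equivalence, whenever the $\V$-factor is core acyclic.

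For this remaining case, I would apply Lemma \ref{lem_CharacterizationOfRequivalencesWithEnrichment}: it suffices to show that $\mathbb{R}\underline{\M}(i\square j, X)$ is an equivalence in $\V$ for every $\Rfrak$-local object $X$, which by Proposition \ref{prop_CharacterizationOfFibrantsInLBLwithLocalizator} is automatically fibrant. The hom-tensor adjunction identifies $\underline{\M}(i\square j, X)$ with the pullback-hom inside $\V$ of $i$ together with $\underline{\M}(j,X)\colon \underline{\M}(B,X)\to\underline{\M}(A,X)$. Now $A$ is cofibrant and $j$ is a cofibration, so $B$ is cofibrant too; combined with $X$ fibrant, the core right Quillen bifunctor property of $\underline{\M}$ ensures that $\underline{\M}(j,X)$ is a core fibration in $\V$. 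Applying Lemma \ref{lem_CharacterizationOfRequivalencesWithEnrichment} a second time, now to the $\Rfrak$-equivalence $j$ and the $\Rfrak$-local $X$, the same map is additionally an equivalence, hence a core acyclic fibration in $\V$. The pushout-product axiom of the closed symmetric monoidal structure on $\V$ then forces the pullback-hom of the core cofibration $i$ with this core acyclic fibration to be a core acyclic fibration, in particular an equivalence.

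The remaining clauses of Definition \ref{defn_CoreQBifunctor} --- namely that the enriched hom $\Map_{\text{l}}=\underline{\M}$ and the cotensor $\Map_{\text{r}}$ are core right Quillen bifunctors into $\V$ and $\M_{\Rfrak}$ respectively --- follow by the standard adjoint transposition of pushout-product and pullback-hom conditions, using Proposition \ref{prop_CharacterizationOfCoreFibrationsInLBLwithLocalizator} to identify core $\Rfrak$-fibrations as core fibrations between $\Rfrak$-local objects. The unit axiom is unchanged from $\M$, as it involves only the $\V$-unit and the original $\V$-action. The main obstacle is precisely the reduction step above: without the mapping-space characterization of $\Rfrak$-equivalences supplied by Lemma \ref{lem_CharacterizationOfRequivalencesWithEnrichment}, there would be no direct handle on $\Eq_{\Rfrak}$ to feed into the bifunctorial argument.
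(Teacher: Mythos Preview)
Your proposal is correct and follows essentially the same route as the paper: both reduce the verification to the acyclicity clauses, both invoke Lemma \ref{lem_CharacterizationOfRequivalencesWithEnrichment} for the key case where the $\M$-side cofibration is an $\Rfrak$-equivalence, and both use Proposition \ref{prop_CharacterizationOfCoreFibrationsInLBLwithLocalizator} to handle the fibration-side conditions. The only presentational difference is that the paper treats the three bifunctors (tensor, cotensor, enrichment) one by one rather than appealing to ``standard adjoint transposition''; in the semimodel setting with merely \emph{core} Quillen conditions this equivalence of the three formulations is not automatic, which is why the paper spells each out, but the ingredients you cite (Proposition \ref{prop_CharacterizationOfCoreFibrationsInLBLwithLocalizator} plus Lemma \ref{lem_CharacterizationOfRequivalencesWithEnrichment}) are exactly what is needed to complete those checks.
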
  
\begin{proof}
	We need to prove that $\V$-tensor/cotensor/enrichment of $\M$ stablishes a core Quillen adjunction of two variables for $\M_{\Rfrak}$. This claim is reduced to prove the acyclic part of the definition of core Quillen bifunctor for $\M_{\Rfrak}$ for each bifunctor.
	\begin{itemize}
		\item The $\V$-tensor $\odot$ is core Quillen: it suffices to check that the associated pushout product of two core cofibrations  $\upi\square\upj$ is an $\Rfrak$-equivalence when $\upj$ is so. Apply the $\V$-enrichment functor $\underline{\M}(\star,\Xrect)$, for $\Xrect$ fibrant $\Rfrak$-local, to the (homotopy) pushout diagram defining $\upi\square\upj$ and Lemma \ref{lem_CharacterizationOfRequivalencesWithEnrichment}. 
		
		\item The $\V$-cotensor $(\star)^{\bullet}$ is core Quillen: it suffices to check that the associated pullback hom $\widehat{\upp}^{\upi}$ of a core cofibration $\upi$ and a core fibration $\upp$ is an $\Rfrak$-equivalence when $\upp$ is so. Immediate by Proposition \ref{prop_CharacterizationOfCoreFibrationsInLBLwithLocalizator}.

		\item  The $\V$-enrichment $\underline{\M}$ is core Quillen: it suffices to check that the associated pullback hom $\widehat{\underline{\M}}(\upi,\upp)$ of a core cofibration $\upi$ and a core fibration $\upp$ is an equivalence when $\upi$ or $\upp$ is an $\Rfrak$-equivalence. When $\upp$ is an $\Rfrak$-equivalence, there is nothing to prove by Proposition \ref{prop_CharacterizationOfCoreFibrationsInLBLwithLocalizator}. When $\upi$ is the $\Rfrak$-equivalence, the claim follows from Lemma \ref{lem_CharacterizationOfRequivalencesWithEnrichment}.
	\end{itemize}
\end{proof}

\paragraph{Relaxing functoriality of BF-reflectors.} Just as a side remark, we observe that the functoriality assumed in the definition of BF-reflector can be relaxed a bit. In \cite[Appendix A]{biedermann_duality_2015}, it is proposed a way to do so that still yields a Bousfield localization of proper model categories. Inspecting their proof and our arguments, one observes that properness can be dropped in \cite[Theorem A.8]{biedermann_duality_2015} and that one can get the following result.
\begin{prop}\label{prop_LBLForNonFunctorialBFReflectors} The conclusion of Theorem \ref{thm_LBLfromLocalizator} holds if we replace the BF-reflector $\Rfrak$ by any homotopy localization construction satisfying A2-A5 in the sense of \cite[Appendix A]{biedermann_duality_2015}.
\end{prop}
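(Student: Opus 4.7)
The plan is to revisit the proof of Theorem \ref{thm_LBLfromLocalizator} step by step, keeping careful track of where functoriality of $\Rfrak$ and naturality of $\upeta$ were genuinely used, and then to check that axioms A2--A5 in the sense of \cite[Appendix A]{biedermann_duality_2015} supply exactly what is needed at each point. First, I would reformulate the basic notions. The classes of $\Rfrak$-local objects and $\Rfrak$-fibrations can be defined from the objectwise data $(\Rfrak\Xrect,\upeta_{\Xrect})$ alone, without reference to $\Rfrak$ as a functor on arrows; the class of $\Rfrak$-equivalences has to be redefined, say as maps $\upf\colon\Xrect\to\Yrect$ for which $\upeta_{\Yrect}\cdot\upf$ factors up to homotopy through $\upeta_{\Xrect}$ by an equivalence, which is the content of A2--A5 and agrees with $\Rfrak^{-1}(\Eq)$ when $\Rfrak$ is genuinely functorial.

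Next, I would verify that the preliminary results (Propositions \ref{prop_ACofInLBLwithLocalizator}, \ref{prop_CharacterizationOfFibrantsInLBLwithLocalizator} and \ref{prop_CharacterizationOfCoreFibrationsInLBLwithLocalizator}) still hold under this relaxation. Inspecting the arguments, they depend on $\Rfrak$ only through the homotopical invariance on objects, the comparison map $\upeta$, and the fact that $\upeta_{\Rfrak\Xrect}$ is an equivalence; all three ingredients are encoded (up to homotopy) in A2--A5, so the proofs transfer with minor bookkeeping. In particular, the characterization of $\Rfrak$-fibrant objects as those which are both fibrant and $\Rfrak$-local only uses the retract argument together with the availability of a weak fibrant replacement compatible with the localization data, both of which are preserved.

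The main obstacle will be factorization axiom (ii) in the proof of Theorem \ref{thm_LBLfromLocalizator}. In the functorial setting one uses the strictly commutative naturality square built from $\upeta$ and $\Rfrak\upg$, then pulls it back to produce the factorization. Without strict functoriality this square exists only up to homotopy, and this is exactly where \cite[Theorem A.8]{biedermann_duality_2015} invoked properness: in their setup properness was needed to manipulate the homotopy-commutative square into a strictly commutative one while preserving equivalences under pullback. The remedy in our setting is that the Fresse right semimodel axioms permit $(\ACof,\Fib_{\circ})$-factorizations of any map with fibrant target, so one can rectify the homotopy commutative square to a strictly commutative one between fibrant $\Rfrak$-local objects without having to assume right properness. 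Lemma \ref{lem_Hopullbacks} then ensures that the base change of the rectified equivalence $\Rfrak\upg$ along $\upeta_{\Yrect}$ remains an equivalence, and the rest of the argument (factoring the induced map $\Xrect\to\Xrect'$ as $\Cof$ followed by $\AFib_{\circ}$ and applying 2-out-of-3) proceeds exactly as in the functorial case.

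Finally, closure of $\Eq_{\Rfrak}$ under retracts and 2-out-of-3, the lifting axioms, and fibrancy of $\mathbb{1}$ all reduce to retract and 2-out-of-3 arguments that required only objectwise data; these carry over verbatim. The upshot is that the extra flexibility built into A2--A5 is absorbed by the semimodel factorization axioms in precisely the same way that properness was absorbed in the classical case, so Theorem \ref{thm_LBLfromLocalizator} and hence the conclusion of Proposition \ref{prop_LBLForNonFunctorialBFReflectors} go through.
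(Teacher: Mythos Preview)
Your proposal is correct and follows essentially the same approach as the paper: the paper does not give a detailed proof, but simply states that one should inspect the proof of \cite[Theorem A.8]{biedermann_duality_2015} together with the arguments for Theorem \ref{thm_LBLfromLocalizator} and observe that properness can be dropped. You have carried out precisely this inspection, correctly identifying factorization axiom (ii) as the only place where the lack of strict naturality matters, and correctly pinpointing that the role played by properness in Biedermann--Chorny's rectification argument is taken over here by the availability of $(\ACof,\Fib_{\circ})$-factorizations for maps with fibrant target together with Lemma \ref{lem_Hopullbacks}.
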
 

 \begin{rem} The results in this section are completely dualizable. That is, they are valid to construct right Bousfield localizations of left semimodel categories.
\end{rem}
\end{paragraph}

\section{Producing Bousfield localizations from small data}\label{sect_LocalizatorsFromSmallData}
This section deals with a modest generalization of an usual method to construct left Bousfield localizations in practice (see \cite[Section 4.2]{hirschhorn_model_2003}). This mild generalization will allow us to produce Bousfield localizations that were not known to exist so far in Section \ref{sect_Applications}.

\paragraph{Finding a BF-reflector from small data.}
	When one tries to understand the homotopy theory of a full subcategory of $\Ho\M$, it turns out that finding a set of maps $\Srect$ with respect to which the objects of the full subcategory are just $\Srect$-local objects (in some sense) is a fruitful idea. There are mainly two ways for an object to be $\Srect$-local: using the canonical mapping spaces associated to any (semi)model category (via frames) or using an enrichment of the (semi)model structure. In both situations, we require that the (local) object $\Zrect$ witnesses the maps of $\Srect$ as equivalences through the lenses of the selected derived mapping functor $\Map(\star,\Zrect)$. 

    We will focus on the enriched case since it is less common, but everything in this section can be adapted to the use of mapping spaces from framings. In order to accomodate both left and right semimodel structures in the same discussion, we assume that $\M$ is a full model category in this section, remarking the needed modifications for semimodel categories at the end.
	
    Given a set of maps $\Srect$ in $\M$, we want to find a BF-reflector $\Rfrak\colon\M\to \M$ to model the homotopy theory of $\Srect$-local objects. We see that two different paths appear depending on if we assume $\M$ to be left proper or not. If we assume that $\M$ is left proper, we will obtain a BF-reflector in great generality and so a Bousfield localization presenting the homotopy theory of $\Srect$-local objects (by Theorem \ref{thm_LBLfromLocalizator}). On the other hand, if $\M$ is not left proper, we will not obtain a BF-reflector, but with a bit more effort and under smallness hypotheses on $\M$, we will see in Section \ref{sect_LBLviaCellularity} that a similar left Bousfield localization exists by other means.  


In the rest of the section, we will consider $\M$ to be a $\V$-model category and we will require more properties on $\M$ and $\V$ when needed, and only those which are explicitly mentioned. For example, we will not assume that $\V$ is a cofibrantly generated model category, only that it admits a set of generating cofibrations.  

\begin{defn} Let $\Srect$ be a set of maps in $\M$. 
	\begin{itemize}
		\item An object $\Xrect$ in $\M$ is $\Srect$-\emph{local} if 
		$
		\mathbb{R}\underline{\M}(\Srect,\Xrect) 
		$ are equivalences in $\V$.
		\item A map $\upf$ in $\M$ is an $\Srect$-\emph{equivalence} if $\mathbb{R}\underline{\M}(\upf,\Xrect)$ is an equivalence for any $\Srect$-local object $\Xrect$. The class of $\Srect$-equivalences will be referred as $\Eq_{\Srect}$.
	\end{itemize}
\end{defn}

\begin{rem} It is easy to see that the above notions only depend on the homotopy type of $\Srect$ in $\Ho\M$. That is why we can and we will assume that $\Srect$ consists on core cofibrations in the sequel. One of the nice implications of this choice is: since $\underline{\M}\colon(\Xrect,\Yrect)\mapsto\underline{\M}(\Xrect,\Yrect)$ is a Quillen bifunctor, a fibrant object $\Xrect$ is $\Srect$-local iff $\underline{\M}(\Srect,\Xrect)$ belongs to $\AFib$.
\end{rem}

Using the usual adjunction on the arrow category induced by the $\V$-enrichment of $\M$, 
$$
\underline{\M}(\Srect,\Xrect) \text{ belongs to }\AFib\quad\text{iff}\quad \Cof\;\boxslash\; \underline{\M}(\Srect,\Xrect) \quad\text{iff}\quad \big(\Cof\square\Srect\big)\boxslash \big(\Xrect\to\mathbb{1}\big).
$$
This motivates the definition of the following set of morphisms to detect when an object is $\Srect$-local, provided it is fibrant and that $\V$ has a set of generating cofibrations.
\begin{defn} The set of $\Srect$-\emph{horns} is the following set of maps in $\M$
	$$
	\mathsf{Horn}(\Srect)=\Irect_{\V}\square\Srect=\left\{\upi\square\upf \text{ s.t. }\upi\in\Irect_{\V} \text{ and }\upf\in\Srect\right\}
	$$
where $\Irect_{\V}$ is a fixed set of generating cofibrations of $\V$.
\end{defn}

In general, we also need to detect if an object is fibrant before checking if it is $\Srect$-local, or perhaps detect both properties simultaneously. One way to do so is by adding to $\mathsf{Horn}(\Srect)$ a set of maps $\Jrect$ of $\M$ that detects fibrant objects by rlp against $\Jrect$. Then, considering the set  $\overline{\mathsf{Horn}}(\Srect)=\mathsf{Horn}(\Srect)\cup\Jrect$, it is clear that $\Xrect$ in $\M$ is fibrant $\Srect$-local iff it has right lifting property against $\overline{\mathsf{Horn}}(\Srect)$\footnote{In usual references like \cite{hirschhorn_model_2003}, $\Jrect$ is a set of generating acyclic cofibrations. We use this other approach to stress that this choice is not crucial and since this flexibility has its effect in Section \ref{sect_Applications}.}.

\begin{rem} There are model categories that come with a set that detect fibrant objects, but not a set of generating acyclic cofibrations. Any right Bousfield localization at a set of objects of a right proper cellular or combinatorial model category is an example, see  \cite{barwick_left_2010} or \cite{hirschhorn_model_2003}. 
\end{rem}

If one is able to apply a small object argument to the set $\overline{\mathsf{Horn}}(\Srect)$ to factorize the unique map $\Zrect\to\mathbb{1}$, it will result a map $\Zrect\to\widetilde{\Zrect}$ which belongs to $\overline{\mathsf{Horn}}(\Srect)\text{-cell}$ and where  $\widetilde{\Zrect}$ is fibrant $\Srect$-local. This looks like a BF-reflector and this is exactly the strategy to construct it. What has to be checked to get an actual BF-reflector is that maps in  $\overline{\mathsf{Horn}}(\Srect)\text{-cell}$ are $\Srect$-equivalences. This will not hold in general and here is where left properness of $\M$ plays a role. The following lemmas discuss this problem without assuming that $\M$ is left proper.

\begin{lem}\label{lem_HornsAreCoreCofibrations} If $\V$ admits a generating set of cofibrations with cofibrant domains, the associated class of maps  $\mathsf{Horn}(\Srect)$ in $\M$ consists of core cofibrations.
\end{lem}
\begin{proof}
 This follows from the fact that maps in $\Srect$ are core cofibrations as well as those in $\Irect_{\V}$ by the pushout product axiom. 	
\end{proof}

\begin{lem}\label{lem_HornCellsAreSEquivalences} Assume (i) $\V$ admits a generating set of cofibrations with cofibrant domains and (ii) $\M$ admits a set $\Jrect$ of core acyclic cofibrations that detect fibrant objects by rlp. Then, 
	 any map in $\overline{\mathsf{Horn}}(\Srect)\textup{-cell}$ with cofibrant source is an $\Srect$-equivalence.
\end{lem}
\begin{proof}
We are reduced to show that maps in $\mathsf{Horn}(\Srect)$ are $\Srect$-equivalences and that the steps in the construction of cell complexes preserve this property.

First, we check that $\mathsf{Horn}(\Srect)\subseteq \Eq_{\Srect}$. Since any map $\upi\square\upf$ in $\mathsf{Horn}(\Srect)$ is a core cofibration by Lemma \ref{lem_HornsAreCoreCofibrations}, we show that $\upi\square\upf$ is an  $\Srect$-equivalence by showing that $\underline{\M}(\upi\square\upf,\Xrect)$ is an acyclic fibration in $\V$ for any fibrant $\Srect$-local object $\Xrect$. Using the set of generating cofibrations $\Irect_{\V}$ of $\V$, this fact follows from the following adjunction argument  
$$
\Irect_{\V}\,\boxslash\,\underline{\M}(\upi\square\upf,\Xrect) \quad \text{iff} \quad
 \big(\Irect_{\V}\square\upi\big)\,\boxslash\,\underline{\M}(\upf,\Xrect)
$$
by the pushout product axiom. 

Now, we prove the cobase change case, which has to be seen as attaching cells to a cofibrant object. So, let us take a map $\upj$ which fits into a pushout square
\begin{equation}\label{eqt_PushoutOfHorn}
\begin{tikzcd}[ampersand replacement=\&]
\bullet \ar[rr,"\in\mathsf{Horn}(\Srect)"]\ar[d]\ar[rrd, phantom, "\ulcorner" description] \&\& \bullet\ar[d]\\
\Zrect\ar[rr, "\upj"'] \&\& \Zrect'
\end{tikzcd}
\end{equation}
where $\Zrect$ is cofibrant. By Lemma \ref{lem_HornsAreCoreCofibrations}, this pushout is a homotopy pushout and hence becomes a homotopy pullback after applying $\mathbb{R}\underline{\M}(\star,\Xrect)$. Thus, when $\Xrect$ is $\Srect$-local, $\mathbb{R}\underline{\M}(\upj,\Xrect)$ is an equivalence since $\mathsf{Horn}(\Srect)\subseteq\Eq_{\Srect}$. 

Finally, consider that $\upg$ is the transfinite composite of a $\uplambda$-sequence
\begin{equation}\label{eqt_TransfiniteCompositeOfHorn}
\begin{tikzcd}[ampersand replacement=\&]
\Zrect \ar[r,"\upg_{1}"] \& \bullet \ar[r] \& \cdots \ar[r] \& \bullet \ar[r, "\upg_{\upalpha}"] \& \bullet \ar[r]\& \cdots
\end{tikzcd}
\end{equation}
where each $\upg_{\upalpha}$ is a cobase change of a map in $\overline{\mathsf{Horn}}(\Srect)$ and $\Zrect$ is cofibrant. Again by Lemma \ref{lem_HornsAreCoreCofibrations} and the cobase change case above, an application of $\underline{\M}(\star, \Xrect)$ to the $\uplambda$-sequence describes $\underline{\M}(\upg,\Xrect)\simeq \mathbb{R}\underline{\M}(\upg,\Xrect)$ as a transfinite cocomposite of acyclic fibrations, proving that it is an acyclic fibration.
\end{proof}

\begin{rem}
The cofibrant source assumption in Lemma \ref{lem_HornCellsAreSEquivalences} is essential to ensure that the pushout (\ref{eqt_PushoutOfHorn}) and the transfinite composite (\ref{eqt_TransfiniteCompositeOfHorn}) are homotopical (which is the main ingredient for loc.cit.\;to hold). It is folklore that asking $\M$ to be left proper allows one to drop this cofibrancy hypothesis (see \cite[Chapter 4]{hirschhorn_model_2003}). Moreover, assuming that $\M$ is left proper allows us to replace the set $\mathsf{Horn}(\Srect)$ by core cofibrations and $\Jrect$ by core acyclic cofibrations if all maps in $\Jrect$ are equivalences even when $\Irect_{\V}$ do not have cofibrant domains and $\Jrect$ is arbitrary. Hence, in the left proper case, under an unsignificant change of $\overline{\mathsf{Horn}}(\Srect)$, relative $\overline{\mathsf{Horn}}(\Srect)$-cell complexes are $\Srect$-equivalences.
\end{rem}

We state first what is known without left properness. In this case, we will have to apply Lemma \ref{lem_HornCellsAreSEquivalences} and therefore we obtain:
\begin{prop}\label{prop_LocalizatorFromAlmostNothing} Let $\M$ be a $\V$-model category, $\Srect$ a set of maps in $\M$ where  (i) $\V$ admits a set of generating cofibrations with cofibrant domains and (ii) $\M$ admits a set of core acyclic cofibrations that detect fibrant objects. If the set of core cofibrations  $\overline{\mathsf{Horn}}(\Srect)$ admits Quillen's soa, 
then there is a zigzag of natural transformations over $\M$, 
$
\id\xLeftarrow{\upq} \bullet \xRightarrow{r} \Rfrak,
$ 
such that:
\begin{itemize}
\item $\upq$ is a natural equivalence and $r$ is a natural $\Srect$-equivalence.
\item The endofunctors in the zigzag are $\V$-enriched, homotopical and  $\Rfrak$ takes values in (bi)fibrant $\Srect$-local objects.
\end{itemize}
\end{prop}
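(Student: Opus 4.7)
The plan is to build $\Rfrak$ by running Quillen's small object argument for $\overline{\mathsf{Horn}}(\Srect)$ on top of a functorial cofibrant replacement. Concretely, I first fix a $\V$-enriched functorial cofibrant replacement $\upq\colon\mathsf{Q}\Rightarrow\id$, produced by the enriched version of the soa applied to a set of generating cofibrations of $\M$. Then, for each $\Xrect\in\Mrect$, I apply Quillen's soa to the map $\mathsf{Q}\Xrect\to\mathbb{1}$ to obtain a factorization
$$
\mathsf{Q}\Xrect\xrightarrow{r_{\Xrect}}\Rfrak\Xrect\longrightarrow\mathbb{1}
$$
with $r_{\Xrect}\in\overline{\mathsf{Horn}}(\Srect)\textup{-cell}$ and $\Rfrak\Xrect\to\mathbb{1}$ in $\overline{\mathsf{Horn}}(\Srect)^{\boxslash}$. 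Since the latter condition forces $\Rfrak\Xrect$ to be fibrant and $\Srect$-local by design, this produces the zigzag $\id\xLeftarrow{\upq}\mathsf{Q}\xRightarrow{r}\Rfrak$.

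The advertised properties should follow almost formally: (a) $\Rfrak\Xrect$ is cofibrant since $\mathsf{Q}\Xrect$ is so and $r_{\Xrect}$ is a transfinite composite of cobase changes of core cofibrations by Lemma \ref{lem_HornsAreCoreCofibrations} together with the standing hypothesis $\Jrect\subseteq\ACof_{\circ}$; (b) the cofibrancy of the source of $r_{\Xrect}$ plus Lemma \ref{lem_HornCellsAreSEquivalences} immediately give that $r$ is a natural $\Srect$-equivalence; (c) $\upq$ is a natural equivalence and $\mathsf{Q}$ is homotopical by Ken Brown. For the homotopicalness of $\Rfrak$, given an equivalence $\upf\colon\Xrect\to\Yrect$, naturality of $r$ and 2-out-of-3 applied to $\Srect$-equivalences force $\Rfrak\upf$ to be an $\Srect$-equivalence between bifibrant $\Srect$-local objects; a Yoneda-type argument, testing with $\mathbb{R}\underline{\M}(\star,\Rfrak\Yrect)$ and using $\Srect$-locality of the target, then promotes this to a weak equivalence in $\M$.

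The main technical obstacle will be the $\V$-enrichment of the endofunctors. I expect it to be produced automatically by running all soa steps enrichedly over $\V$: at each ordinal stage the attaching pushout is formed from the $\V$-tensor of a generating map with the $\V$-hom object parametrizing lifting squares, which yields $\V$-functoriality of the output and $\V$-naturality of the maps between stages. Since $\overline{\mathsf{Horn}}(\Srect)=\Irect_{\V}\square\Srect\cup\Jrect$ is already defined in $\V$-enriched terms, this enriched procedure will terminate under the very smallness assumption that grants Quillen's soa in the statement.
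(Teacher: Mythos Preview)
Your proposal is correct and follows essentially the same route as the paper: first take a (functorial, $\V$-enriched) cofibrant replacement $\Qrect$, then apply the small object argument for $\overline{\mathsf{Horn}}(\Srect)$ to $\Qrect\Xrect\to\mathbb{1}$, invoking Lemma~\ref{lem_HornCellsAreSEquivalences} for the $\Srect$-equivalence of $r$ and the enriched soa (as in \cite[Corollary 13.2.4]{riehl_categorical_2014}) for the $\V$-enrichment. You actually spell out more than the paper does---the bifibrancy of $\Rfrak\Xrect$ and the homotopicalness of $\Rfrak$ via the ``$\Srect$-equivalence between $\Srect$-local objects is an equivalence'' argument---which the paper leaves implicit.
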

\begin{proof}
	We construct the zigzag on an object $\Xrect$ (naturality will be clear). Take a cofibrant replacement $\Xrect\leftarrow \Qrect\Xrect$ and factor $\Qrect\Xrect\to \mathbb{1}$ using Quillen's soa for $\overline{\mathsf{Horn}}(\Srect)$. The conclusion follows by Lemma \ref{lem_HornCellsAreSEquivalences}. 
	
	The $\V$-enrichment of the functors comes from applying enriched Quillen's soa instead of ordinary Quillen's soa \cite[Corollary 13.2.4]{riehl_categorical_2014}.
\end{proof}

Assuming that $\M$ is left proper, the first part of the argument in Proposition \ref{prop_LocalizatorFromAlmostNothing} yields an easy generalization of \cite[Theorem 4.3.3]{hirschhorn_model_2003}. In this case, one gets: 
\begin{prop}\label{prop_LocalizatorLeftProperCase}
	 Let $\M$ be a left proper $\V$-model category, $\Srect$ a set of maps in $\M$ satisfying (i) $\V$ admits a set of generating cofibrations (no hypothesis on domains) and (ii) $\M$ admits a set of maps that detect fibrant objects. If the set of core cofibrations $\overline{\mathsf{Horn}}(\Srect)$ admits Quillen's soa\footnote{Or any generalization which produces factorizations whose left factor is a cellular complex.}, 
	then there is a ($\V$-enriched) BF-reflector $(\Rfrak,\upeta)$ such that:
	\begin{itemize}
		\item  $\Rfrak$-equivalences are precisely $\Srect$-equivalences.
		\item $\Rfrak$-local objects are precisely fibrant $\Srect$-local objects.
	\end{itemize}
\end{prop}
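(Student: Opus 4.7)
The plan is to mimic the construction in Proposition \ref{prop_LocalizatorFromAlmostNothing} while exploiting left properness to bypass the cofibrant-replacement step, thereby producing a genuine (rather than zigzag) natural transformation $\upeta\colon\id\Rightarrow\Rfrak$. First I would define $(\Rfrak,\upeta)$ by applying an enriched version of Quillen's small object argument to $\overline{\mathsf{Horn}}(\Srect)$ on the canonical map $\Xrect\to\mathbb{1}$, obtaining a functorial factorization $\Xrect\xrightarrow{\upeta_{\Xrect}}\Rfrak\Xrect\to\mathbb{1}$ whose left factor lies in $\overline{\mathsf{Horn}}(\Srect)\textup{-cell}$ and whose right factor has rlp against $\overline{\mathsf{Horn}}(\Srect)$. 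This guarantees that $\Rfrak\Xrect$ is fibrant and $\Srect$-local by design, and the $\V$-enrichment of $\Rfrak$ is inherited from the enriched small object argument exactly as in Proposition \ref{prop_LocalizatorFromAlmostNothing}.

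The crucial step, and the only place where left properness enters essentially, is to strengthen Lemma \ref{lem_HornCellsAreSEquivalences} so that every map in $\overline{\mathsf{Horn}}(\Srect)\textup{-cell}$ is an $\Srect$-equivalence \emph{without} any cofibrant-source restriction. After the inessential modification of $\overline{\mathsf{Horn}}(\Srect)$ alluded to just before the statement (which preserves the cell closure up to $\Srect$-equivalence and realises its elements as core cofibrations), one reruns the pushout and transfinite-composite arguments of Lemma \ref{lem_HornCellsAreSEquivalences}: under left properness, pushouts along cofibrations are homotopical and equivalences are preserved by transfinite composition of cofibrations, so $\underline{\M}(-,\Xrect)$ continues to send relative $\overline{\mathsf{Horn}}(\Srect)$-cell complexes to acyclic fibrations for every fibrant $\Srect$-local $\Xrect$. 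In particular $\upeta_{\Xrect}\in\Eq_{\Srect}$ for every object $\Xrect$.

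The remaining verifications are formal. For \emph{homotopicality} of $\Rfrak$: given an equivalence $\upf$, 2-out-of-3 on $\Eq_{\Srect}$ applied to the naturality square of $\upeta$ forces $\Rfrak\upf\in\Eq_{\Srect}$, and since $\Rfrak\upf$ is between fibrant $\Srect$-local objects, a Yoneda argument inside the $\Srect$-local subcategory of $\Ho\M$ promotes it to an equivalence. For the idempotency conditions: $\upeta_{\Rfrak\Xrect}$ is an $\Srect$-equivalence between fibrant $\Srect$-local objects, hence an equivalence; and the naturality identity $\Rfrak\upeta_{\Xrect}\circ\upeta_{\Xrect}=\upeta_{\Rfrak\Xrect}\circ\upeta_{\Xrect}$, combined with the fact that precomposition by $\upeta_{\Xrect}$ induces a bijection $[\Rfrak\Xrect,\Rfrak\Rfrak\Xrect]_{\Ho\M}\to[\Xrect,\Rfrak\Rfrak\Xrect]_{\Ho\M}$ by $\Srect$-locality of $\Rfrak\Rfrak\Xrect$, identifies $\Rfrak\upeta_{\Xrect}$ with $\upeta_{\Rfrak\Xrect}$ in $\Ho\M$ and so also as an equivalence. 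Finally, the two characterizations are immediate: $\upf\in\Eq_{\Rfrak}$ iff $\Rfrak\upf\in\Eq_{\Srect}$ iff $\upf\in\Eq_{\Srect}$ by 2-out-of-3 on the $\upeta$'s; and the $\Rfrak$-locality of a fibrant $\Srect$-local $\Xrect$ follows by applying the argument used for $\upeta_{\Rfrak\Xrect}$ directly to $\upeta_{\Xrect}$, while the converse is transport of structure along the equivalence $\upeta_{\Xrect}\colon\Xrect\to\Rfrak\Xrect$. The main obstacle is precisely the upgrade of Lemma \ref{lem_HornCellsAreSEquivalences} just described; everything else is diagram chasing.
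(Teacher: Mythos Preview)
Your proposal is correct and follows exactly the approach the paper sketches: apply the small object argument for $\overline{\mathsf{Horn}}(\Srect)$ to $\Xrect\to\mathbb{1}$ as in Proposition \ref{prop_LocalizatorFromAlmostNothing}, and invoke left properness (as in the paragraph immediately preceding the statement) to drop the cofibrant-source hypothesis from Lemma \ref{lem_HornCellsAreSEquivalences}, so that $\upeta$ is a genuine natural $\Srect$-equivalence rather than a zigzag. The paper itself gives no further argument beyond this pointer, so your write-up of the BF-reflector verifications (homotopicality, idempotency, and the two characterizations) is in fact more detailed than what the paper supplies; the only soft spot is the phrase ``transport of structure'' for the implication $\Rfrak$-local $\Rightarrow$ fibrant $\Srect$-local, since fibrancy is not equivalence-invariant, but this direction is not used downstream and the imprecision is already present in the paper's formulation.
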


\begin{rems}\label{rems_FindingLocalizatorSEMIMODELS} $\,$
	\begin{enumerate} 
		\item In this section, one can replace ``$\V$-model" by ``$\V$-semimodel" category, where being $\V$-enriched means as in Section \ref{sect_Preliminaries}. However, $\V$ must be closed symmetric monoidal model category in either case and, as a technicality to obtain $\V$-enriched functors, it has to be assumed that for any $\Vrect\in\V$, the tensoring $\Vrect\odot\,\star$ is core left Quillen. Note that the zigzag in Proposition \ref{prop_LocalizatorFromAlmostNothing} is longer if we start from a right $\V$-semimodel category.
		
		\item Using the theory of frames, one may adapt the results for non-enriched settings. This is precisely the perspective chosen by Hirschhorn in \cite{hirschhorn_model_2003}. 
	\end{enumerate}
\end{rems}

\begin{thm}\label{thm_LBLatSequivalencesWith1StepLocalizator}
 Let $\M$ be a left proper (Fresse/coSpitzweck right semi)model category that admits a set of maps $\Jrect$ that detects fibrant objects and let $\Srect$ be a set of maps in $\M$. If the set of core cofibrations $\overline{\mathsf{Horn}}(\Srect)$ admits Quillen's soa, then   
 $$
 \M_{\Srect}=(\Mrect,\Cof,\Eq_{\Srect},(\Cof\cap\Eq_{\Srect})^{\boxslash})
 $$
 is the left Bousfield localization of $\M$ at $\Srect$-equivalences among  Fresse/coSpitzweck right semimodel categories with left Quillen functors between them.	
 
 In the enriched case, i.e.\ if $\M$ is a Fresse/coSpitzweck right $\V$-semimodel category, the $\V$-enriched left Bousfield localization of $\M$ at $\Srect$ exists as a Fresse/coSpitzweck right $\V$-semimodel category provided (a) $\V$ admits a set of generating cofibrations and (b) the set of core cofibrations  $\overline{\mathsf{Horn}}(\Srect)$ admits Quillen's soa.
\end{thm}
\begin{proof}
	By Proposition \ref{prop_LocalizatorLeftProperCase} and Remarks \ref{rems_FindingLocalizatorSEMIMODELS}, there is a ($\V$-enriched) BF-reflector to which we can apply Theorem \ref{thm_LBLfromLocalizator} (and Proposition \ref{prop_EnrichmentForLocalizator} in the enriched case). The definition of $\mathsf{Horn}(\Srect)$ in the non-enriched setting is analogous to \cite[Definition 4.2.1]{hirschhorn_model_2003}.
\end{proof}

\begin{rem} Note that Theorem \ref{thm_LBLatSequivalencesWith1StepLocalizator}  can be applied to non-cofibrantly generated model structures. For instance,  right Bousfield localizations constructed via \cite[Theorem 5.1.1]{hirschhorn_model_2003} are not known to be cofibrantly generated but they are susceptible to fit into Theorem  \ref{thm_LBLatSequivalencesWith1StepLocalizator} if one shows that the particular example at sight is left proper (e.g.\ any stable right Bousfield localization of a proper cellular or combinatorial stable model category due to \cite[Proposition 4.8]{barnes_stable_2014}). One can also study left proper model structures on presentable categories (e.g.\ model categories with accessible algebraic wfs cofibrantly generated by a small category, but not by a set, with all objects fibrant; see \cite{bourke_algebraic_2016}) or Hurewicz type model structures on topologically bicomplete categories as some of our examples in Section \ref{sect_Applications} suggest. In that section, we also provide a source of examples coming from relative homological algebra (see Theorem \ref{thm:LBLOfRelativeHomologicalAlgebra}).
\end{rem}


\section{Left Bousfield localization \`a la Hirschhorn}\label{sect_LBLviaCellularity}
	
In this section we construct the left Bousfield localization of $\M$ at $\Srect$-equivalences using Proposition  \ref{prop_LocalizatorFromAlmostNothing}. We are going to see that such localization exists as a Spitzweck left semimodel category\footnote{Notice the strong contrast with Theorems \ref{thm_LBLfromLocalizator} and \ref{thm_LBLatSequivalencesWith1StepLocalizator}, where left Bousfield localization is shown to exists as a right semimodel category.} under somehow strong smallness conditions on $\M$ (see Theorem \ref{thm_BousfieldLocalizationCellularCase}). 

For such goal, we revisit the material in \cite{hirschhorn_model_2003} about localization of cellular model structures but working in an enriched setting and  incorporating a fundamental remark of Goerss-Hopkins \cite{goerss_moduli_2005}. A thorough source where a particular case was discussed is \cite{harper_topological_2019}, although only for operadic algebras and for a concrete choice of ``homological" equivalences.

 From now on, in this section, we will consider that $\M$ is a cofibrantly generated Fresse/ Spitzweck left $\V$-semimodel category and $\Srect$ a set of maps in $\M$ (which are assumed to be core cofibrations without loss of generality). Also, $\V$ is a closed symmetric monoidal model category admitting a set of generating cofibrations with cofibrant domains in the remainder of the section. Further conditions on $\M$ will be imposed when they become necessary for the arguments to work, in order to emphatise their dependence. 

\begin{paragraph}{Towards a left semimodel structure.}
		    Let us start by explaining a wrong approach to the construction of a left $\V$-semimodel structure on the pair  $(\Mrect,\Eq_{\Srect})$, but which hopefully illuminates the line of arguments.

    Our goal is to check if 
    $
    \Lrect_{\Srect}\M=(\Mrect,\Cof,\Eq_{\Srect},(\Cof\cap\Eq_{\Srect})^{\boxslash})
    $ 
    is a (Fresse/Spitzweck left $\V$-semi)model structure. In the best possible scenario, from $\M$ and a set of horns $\mathsf{Horn}(\Srect)$ associated to $\Srect$, one would obtain the desired model structure. A natural way to verify this claim would be appealing to Kan's recognition principle 
     \cite[Theorem 11.3.1]{hirschhorn_model_2003} for  $\overline{\mathsf{Horn}}(\Srect)$ and the already present weak factorization system $(\Cof,\AFib)$ in $\M$:
	\begin{enumerate}
		\item[(i)] (Compatibility of acyclic cofibrations)
		$
		\overline{\mathsf{Horn}}(\Srect)\text{-cof}\subseteq\Cof\cap\Eq_{\Srect}.
		$ 
		\item[(ii)](Compatibility of acyclic fibrations)
		$
		\overline{\mathsf{Horn}}(\Srect)^{\boxslash}\cap\Eq_{\Srect}\supseteq\AFib.
		$
		\item[(iii)](Full compatibility) The reverse inclusion holds in (i) or (ii).
	\end{enumerate}
    However, as noted by Bousfield \cite[Example 2.1.6]{hirschhorn_model_2003}, this strategy will not work in general by the violation of (iii);\footnote{Note that (i) may not hold neither. That is why we use an extra cofibrancy assumption in Lemma \ref{lem_HornCellsAreSEquivalences} or $\M$ is assumed to be left proper in \cite[Chapter 4]{hirschhorn_model_2003}.} indeed, Bousfield's example shows that  $\overline{\mathsf{Horn}}(\Srect)\text{-cof}_{\circ}\subsetneq\Cof_{\circ}\cap\Eq_{\Srect}.$ 
    The problem is that there may be more acyclic cofibrations in the localized model that those constructed from $\overline{\mathsf{Horn}}(\Srect)$.\footnote{In fact, this problem also holds for core acyclic cofibrations, forbidding the use of $\overline{\mathsf{Horn}}(\Srect)$ to construct a left semimodel structure.} 
    
    To improve this strategy, we must just find a set $\Jrect_{\Srect}$ which replaces $\overline{\mathsf{Horn}}(\Srect)$ to generate $\Cof\cap\Eq_{\Srect}$. This is a highly non-trivial task. At present, $\Jrect_{\Srect}$ is only known to exist under cardinality/boundedness assumptions on $\M$ and no workable description of $\Jrect_{\Srect}$ is available in general. These cardinality assumptions can be codified by restricting $\M$ to be a cellular or a combinatorial (semi)model category. Since the combinatorial case is explained in \cite{white_left_2020}, we will treat the cellular one. 
    
    The proofs presented here are just adaptations of those in \cite{hirschhorn_model_2003} to left semimodel categories, and so the presentation will be mostly expository. In contrast with the main sources, we will not assume that $\M$ is cellular when it is not mandatory, to remark that there are just a few points where it is used that cofibrations are effective monomorphisms.
    
    The first step to find the set $\Jrect_{\Srect}$ of generating acyclic cofibrations is to reduce the class of morphisms that detect the canonical candidates for fibrations in the localized model. Start with the whole class $\Cof\cap\Eq_{\Srect}$. The reductions are considered by checking that it suffices to take each time a smaller class in $\Cof\cap\Eq_{\Srect}$ which satisfy (ii) and its reverse for (iii) in Kan's recognition principle. 
    
    One starts by showing that one can consider core cofibrations instead of plain cofibrations assuming that $\M$ is tractable (see Remark \ref{rems_SemimodelsAndTractability}). The same conclusion holds if $\M$ is left proper instead of tractable by \cite[Proposition 13.2.1]{hirschhorn_model_2003}.
    \begin{prop}\label{prop_RestrictGeneratingACOFtoCore} 
    	If $\M$ is tractable, the following classes of maps are equal
    	$$
    	\AFib=(\Cof\cap\Eq_{\Srect})^{\boxslash}\cap\Eq_{\Srect}=(\Cof_{\circ}\cap\Eq_{\Srect})^{\boxslash}\cap\Eq_{\Srect}.
    	$$
    \end{prop}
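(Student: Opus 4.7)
The strategy is to close a chain of inclusions
$$
\AFib \;\subseteq\; (\Cof \cap \Eq_{\Srect})^{\boxslash} \cap \Eq_{\Srect} \;\subseteq\; (\Cof_{\circ} \cap \Eq_{\Srect})^{\boxslash} \cap \Eq_{\Srect} \;\subseteq\; \AFib.
$$
The first inclusion uses that equivalences are $\Srect$-equivalences (since $\mathbb{R}\underline{\M}(-,\Xrect)$ is homotopical), which gives $\AFib \subseteq \Eq \subseteq \Eq_{\Srect}$, together with $\AFib \subseteq \Cof^{\boxslash} \subseteq (\Cof \cap \Eq_{\Srect})^{\boxslash}$. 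The second inclusion is immediate from $\Cof_{\circ} \subseteq \Cof$. All the content is carried by the third inclusion, which is where tractability is essential.

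To prove $(\Cof_{\circ} \cap \Eq_{\Srect})^{\boxslash} \cap \Eq_{\Srect} \subseteq \AFib$, fix $\upp \colon \Xrect \to \Yrect$ in the left-hand class. Since $\M$ is tractable, there is a generating set $\Irect \subseteq \Cof_{\circ}$ with $\AFib = \Irect^{\boxslash}$, so it suffices to solve lifting problems of $\upp$ against arbitrary $\upi \in \Irect$. The plan is first to manufacture a cofibrant approximation of the source: factor $\emp \to \Xrect$ as $\emp \xrightarrow{\,\in \Cof_{\circ}\,} \widetilde{\Xrect} \xrightarrow{\,q_X,\,\in \AFib_{\circ}\,} \Xrect$ using the Fresse left semimodel factorization available for maps with cofibrant source, and then factor $\upp \cdot q_X$ via Quillen's soa applied to $\Irect$ as $\upp \cdot q_X = \upq_1 \cdot \upj_1$ with $\upj_1 \in \Irect\text{-cell}$ and $\upq_1 \in \AFib$. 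Because $\widetilde{\Xrect}$ is cofibrant, $\upj_1$ is in fact a \emph{core} cofibration; and a 2-out-of-3 chase on $\upp,\,q_X,\,\upq_1 \in \Eq_{\Srect}$ shows $\upj_1 \in \Eq_{\Srect}$. Hence $\upj_1 \in \Cof_{\circ} \cap \Eq_{\Srect}$, and the assumed lifting property of $\upp$ yields $s \colon \widetilde{\Yrect} \to \Xrect$ with $s \cdot \upj_1 = q_X$ and $\upp \cdot s = \upq_1$.

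The argument closes with a double lifting trick. Given an arbitrary lifting problem $(f,g)$ for $\upi \colon A \to B \in \Irect$ against $\upp$, first obtain $\widetilde{f} \colon A \to \widetilde{\Xrect}$ lifting $f$ through $q_X$; this is possible because $A$ is cofibrant, whence $\emp \to A \in \Cof_{\circ}$, and $\Cof_{\circ} \boxslash \AFib$. Then consider the auxiliary commutative square with corners $A, \widetilde{\Yrect}, B, \Yrect$, with arrows $\upj_1 \cdot \widetilde{f}$, $\upi$, $\upq_1$, $g$ (commutativity follows from $\upq_1 \cdot \upj_1 \cdot \widetilde{f} = \upp \cdot q_X \cdot \widetilde{f} = \upp \cdot f = g \cdot \upi$); a lift $\widetilde{g} \colon B \to \widetilde{\Yrect}$ exists again by $\Cof_{\circ} \boxslash \AFib$. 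The composite $s \cdot \widetilde{g} \colon B \to \Xrect$ is the desired diagonal, as a direct check on both triangles shows. The main subtlety to keep in mind is that a Fresse left semimodel structure only supplies the lifting axiom $\Cof_{\circ} \boxslash \AFib$ rather than the stronger $\Cof \boxslash \AFib$; tractability is exactly the ingredient that guarantees every cofibration we need to lift across (namely $\upi$ and $\emp \to A$) lies in $\Cof_{\circ}$.
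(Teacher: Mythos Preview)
Your argument is correct and is essentially the standard one (as in the referenced \cite{harper_topological_2019}, itself modeled on Goerss--Hopkins): reduce to lifting against the tractable generating set $\Irect \subseteq \Cof_{\circ}$, construct a section $s$ over a cofibrant replacement via the hypothesis on $\upp$, and transport arbitrary lifting problems through it by two applications of $\Cof_{\circ} \boxslash \AFib$. One small notational slip: the Fresse left semimodel factorization of $\emp \to \Xrect$ yields $q_X \in \AFib$, not $\AFib_{\circ}$ (the target $\Xrect$ need not be fibrant), but this is harmless since you only ever invoke $q_X \in \AFib$.
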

\begin{proof}
See	\cite[Proposition 3.8]{harper_topological_2019}.
\end{proof}

    Next, one replaces core cofibrations by cellular inclusions of subcomplexes.
    \begin{prop}\label{prop_RestrictGeneratingACOFtoSubcells} Let $f$ be a fibration in $\M$. Then $f$ has the right lifting property against $\Cof_{\circ}\cap \Eq_{\Srect}$ iff it has so with respect to $\Irect_{\M}\textup{-subcell}\cap\Eq_{\Srect}$. Hence, if $\M$ is tractable or left proper, 
    	$$\AFib=\big(\Jrect_{\M}\cup\,(\Irect_{\M}\textup{-subcell}\cap\Eq_{\Srect})\big)^{\boxslash}\cap \Eq_{\Srect}$$
    where $\Jrect_{\M}$ is a set of generating acyclic cofibrations for $\M$.	
    \end{prop}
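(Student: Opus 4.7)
The plan is to establish the claimed equivalence of right lifting conditions first, and then combine it with Proposition~\ref{prop_RestrictGeneratingACOFtoCore} to derive the stated formula for $\AFib$.

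One inclusion of the equivalence is immediate: a subcomplex of an $\Irect_\M$-cell complex is itself a cell complex built from the initial object, hence cofibrant, so $\Irect_\M\textup{-subcell} \subseteq \Cof_\circ$. Intersecting with $\Eq_\Srect$ and applying $\boxslash$ shows that every fibration with rlp against the larger class has rlp against the smaller.

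For the converse, I would adapt the Hirschhorn strategy from \cite[Chapter 4 and Section 13.2]{hirschhorn_model_2003} to the left semimodel setting (a cosmetic adjustment, since the factorizations invoked are of ``cofibration followed by fibration'' type). Given a lifting problem of $f$ against a core cofibration $i\colon A\to B$ in $\Eq_\Srect$, I would first present $i$ as a retract, relative to $A$, of a relative $\Irect_\M$-cell complex $i'\colon A\to B'$; a retract argument then reduces the task to solving the lifting problem against $i'$. After a further preliminary replacement of $A$ by an $\Irect_\M$-cell complex (possible since $A$ is cofibrant), $i'$ identifies with a subcomplex inclusion $A\hookrightarrow B'$, still an $\Srect$-equivalence by 2-out-of-3. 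The remaining obstacle is size: $B'$ need not lie within the cardinality bounds implicit in the hypothesis. Here the Bousfield--Smith cardinality argument enters: using compactness of cells and the effective-monomorphism property of cofibrations, one filters $B'$ by subcomplex inclusions $A = B'_0 \hookrightarrow B'_1 \hookrightarrow \cdots \hookrightarrow B'$ in which each successor stage is itself an $\Srect$-equivalence of bounded cardinality, and one assembles the desired lift by transfinite induction, applying the hypothesis on $f$ at each successor stage and passing to colimits at limit stages.

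Granted this equivalence of lifting classes, the formula for $\AFib$ is essentially formal. Proposition~\ref{prop_RestrictGeneratingACOFtoCore} (or \cite[Proposition 13.2.1]{hirschhorn_model_2003} in the left proper case) yields $\AFib = (\Cof_\circ \cap \Eq_\Srect)^\boxslash \cap \Eq_\Srect$, and replacing $\Cof_\circ \cap \Eq_\Srect$ by $\Irect_\M\textup{-subcell} \cap \Eq_\Srect$ among fibrations preserves the class. Since $\Fib = \Jrect_\M^\boxslash$, testing against $\Jrect_\M \cup (\Irect_\M\textup{-subcell} \cap \Eq_\Srect)$ simultaneously encodes the fibration and localized-acyclic-fibration conditions. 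The main obstacle throughout is the cardinality step: it is the technical heart of cellular localization theory and is precisely where the cellularity hypotheses on $\M$ become indispensable, while everything else is formal or a bookkeeping adaptation.
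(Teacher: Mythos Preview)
Your argument contains a genuine misreading of the statement. The class $\Irect_\M\textup{-subcell}$ in this proposition consists of \emph{all} inclusions of subcomplexes into $\Irect_\M$-cell complexes, with no cardinality restriction. The bounded variant $\Irect_\M\textup{-subcell}_{<\upgamma}$ appears only in the \emph{next} result, Proposition~\ref{prop_RestrictGeneratingACOFtoBoundedSubcells}, and it is there---not here---that the Bousfield--Smith cardinality argument is deployed. Consequently, once you have exhibited the map $A\hookrightarrow B'$ as a subcomplex inclusion lying in $\Irect_\M\textup{-subcell}\cap\Eq_\Srect$, the hypothesis on $f$ applies immediately and the lift exists; your paragraph beginning ``The remaining obstacle is size'' is entirely superfluous, and your closing remark that this is ``precisely where the cellularity hypotheses on $\M$ become indispensable'' is incorrect for this proposition. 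Only cofibrant generation is used here; cellularity (in particular the effective-monomorphism condition) enters later, in Lemma~\ref{lem_ConstructionOfBiggerSEquivalentSubcomplex} and Proposition~\ref{prop_RestrictGeneratingACOFtoBoundedSubcells} (cf.\ Remark~\ref{rem_EffectiveMonoForIntersections}).

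Stripped of the misplaced cardinality discussion, your outline---retract onto a relative cell complex, then replace the cofibrant source by a cell complex to obtain a genuine subcomplex inclusion---is correct and is exactly the argument in the references the paper cites, namely \cite[Proposition 3.8]{harper_topological_2019} and \cite[Proposition 1.5.8]{goerss_moduli_2005}. Your derivation of the displayed formula for $\AFib$ from the first claim and Proposition~\ref{prop_RestrictGeneratingACOFtoCore} is also fine.
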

\begin{proof}
	See 
	\cite[Proposition 3.8]{harper_topological_2019} or \cite[Proposition 1.5.8]{goerss_moduli_2005}.
\end{proof}

    At this point is where the subtleties come and where the harder axiom for cellularity plays a role. In order to further reduce $\Irect_{\M}\textup{-subcell}\cap\Eq_{\Srect}$ to a set, the strategy is to find a sufficiently large cardinal $\upgamma$ for which it suffices to take $\Irect_{\M}\textup{-subcell}$ inclusions whose target has less than $\upgamma$ cells. We will denote this class of maps by $\Irect_{\M}\textup{-subcell}_{<\upgamma}$ in the sequel.

\end{paragraph}

\begin{paragraph}{Bousfield-Hirschhorn-Smith cardinality argument.}
	The fundamental reduction to find a set of generators for $\Cof\cap\Eq_{\Srect}$ is given by the following result.
\begin{prop}{\cite[Proposition 4.5.6]{hirschhorn_model_2003}}\label{prop_RestrictGeneratingACOFtoBoundedSubcells} Assume that $\M$ is cellular. Then, there exists a cardinal $\upgamma$\footnote{The choice of the cardinal $\upgamma$ is explained below, in Remark \ref{rem_RegularCardinalForBousfieldSmithArgument}.} for which any fibration $f$ in $\M$ satisfies that $f$ belongs to $\big(\Irect_{\M}\textup{-subcell}\cap \Eq_{\Srect}\big)^{\boxslash}$ iff it belongs to  $\big(\Irect_{\M}\textup{-subcell}_{<\upgamma}\cap\Eq_{\Srect}\big)^{\boxslash}$. Thus, if $\M$ is tractable and cellular 
    	$$\AFib=\underset{\Jrect_{\Srect}}{\underbrace{\big(\Jrect_{\M}\cup\,(\Irect_{\M}\textup{-subcell}_{<\upgamma}\cap\Eq_{\Srect})\big)}}^{\boxslash}\cap \Eq_{\Srect}$$
    	where $\Jrect_{\M}$ is a set of generating acyclic cofibrations for $\M$.
    \end{prop}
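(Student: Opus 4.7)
The plan is to adapt the classical Bousfield--Smith cardinality argument of \cite[Chapter 4]{hirschhorn_model_2003} to the present left semimodel setting. The only-if direction is immediate from the containment $\Irect_{\M}\textup{-subcell}_{<\upgamma}\cap\Eq_{\Srect}\subseteq \Irect_{\M}\textup{-subcell}\cap\Eq_{\Srect}$, so I focus on the converse. Given a fibration $\upf$ with rlp against $\Irect_{\M}\textup{-subcell}_{<\upgamma}\cap\Eq_{\Srect}$ and a lifting problem for $\upf$ against an arbitrary subcell inclusion $\upi\colon\Xrect\hookrightarrow\Yrect$ in $\Eq_{\Srect}$, my aim is to factor $\upi$ as a transfinite composite of subcell inclusions whose targets each have fewer than $\upgamma$ cells and which lie in $\Eq_{\Srect}$, and then to build the lift stage by stage. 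The cardinal $\upgamma$ will be chosen regular and strictly larger than the sizes of cells in $\Irect_{\M}$, $\Jrect_{\M}$, and $\mathsf{Horn}(\Srect)$, and than the smallness bounds of the domains appearing in these sets; cellularity is what guarantees these bounds exist.

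The key technical step is an enlargement lemma: for every subcomplex $\Zrect\subseteq\Yrect$ with fewer than $\upgamma$ cells, there is a subcomplex $\Zrect'\subseteq\Yrect$ containing $\Zrect$, still with fewer than $\upgamma$ cells, such that $\Zrect\cap\Xrect\hookrightarrow\Zrect'$ belongs to $\Eq_{\Srect}$. Granting this lemma, the desired factorization of $\upi$ is built by exhaustion: well-order the cells of $\Yrect\setminus\Xrect$, start at stage $0$ with $\Xrect$, and at each successor stage adjoin the next uncovered cell, form the subcomplex generated together with the previously built stage, and apply the enlargement lemma to restore the $\Srect$-equivalence property. At limit stages take unions, which remain of size $<\upgamma$ by regularity. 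Each successor map in this filtration is then a pushout of a map in $\Irect_{\M}\textup{-subcell}_{<\upgamma}\cap\Eq_{\Srect}$, so $\upf$ lifts against it by hypothesis, and a compatible lift over $\Yrect$ is obtained by assembling the stages.

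To prove the enlargement lemma, my plan is to use a horn-filling characterization of $\Srect$-equivalences: a cofibration between cofibrant objects is an $\Srect$-equivalence iff every lifting problem of a map in $\mathsf{Horn}(\Srect)$ against a fibrant $\Srect$-local target that is solvable on the source is solvable on the target. Cellularity (specifically, the fact that cofibrations are effective monomorphisms and that cells are compact for the chosen smallness bound) ensures that every potential obstruction attached to $\Zrect\cap\Xrect\hookrightarrow\Zrect$ is witnessed on a subcomplex of size $<\upgamma$. One then kills each obstruction by attaching cells of size $<\upgamma$, and iterates this \emph{kill-and-grow} procedure an $\upomega$-indexed number of times; regularity of $\upgamma$ guarantees the union remains of size $<\upgamma$, and a fixed-point-style argument shows that the resulting enlargement really does satisfy the required $\Srect$-equivalence condition.

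The main obstacle I foresee lies precisely in the enlargement lemma, since it is the unique point at which the full force of cellularity is invoked and where one must verify that the horn-filling characterization of $\Srect$-equivalences transfers cleanly to the left semimodel setting, in which functorial factorizations of maps with non-fibrant targets are not available. Once the enlargement lemma is secured, the transfinite composition and the final lifting argument proceed in direct analogy with the proof of \cite[Proposition 4.5.6]{hirschhorn_model_2003}, the only adjustment being to replace full model structure axioms by the Fresse/Spitzweck left semimodel axioms, and to invoke Propositions \ref{prop_RestrictGeneratingACOFtoCore} and \ref{prop_RestrictGeneratingACOFtoSubcells} to justify the reductions to core cofibrations and to subcell inclusions in the setup.
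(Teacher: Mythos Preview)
Your overall strategy is the right one and matches the paper: the Bousfield--Smith cardinality argument, reducing via Propositions \ref{prop_RestrictGeneratingACOFtoCore} and \ref{prop_RestrictGeneratingACOFtoSubcells} to subcell inclusions and then bounding their size. The paper organizes the final step via Zorn's Lemma on the poset of partial lifts rather than your direct transfinite exhaustion, but these are standard interchangeable variants.

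There is, however, a genuine gap in how your enlargement lemma connects to the transfinite construction. You state the lemma for the \emph{fixed} source $\Xrect$: given small $\Zrect\subseteq\Yrect$, you produce small $\Zrect'\supseteq\Zrect$ with $\Zrect\cap\Xrect\hookrightarrow\Zrect'$ an $\Srect$-equivalence. But at stage $\upalpha$ you need the successor map $\Wrect_{\upalpha}\hookrightarrow\Wrect_{\upalpha+1}=\Wrect_{\upalpha}\cup\Zrect'$ to be a pushout of a map in $\Irect_{\M}\textup{-subcell}_{<\upgamma}\cap\Eq_{\Srect}$, and that pushout is along $\Zrect'\cap\Wrect_{\upalpha}\hookrightarrow\Zrect'$, not along $\Zrect'\cap\Xrect\hookrightarrow\Zrect'$. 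Since $\Xrect\subsetneq\Wrect_{\upalpha}$ for $\upalpha>0$, these differ. The fix is exactly what the paper does (Lemma \ref{lem_ConstructionOfBiggerSEquivalentSubcomplex}): state the enlargement lemma for an \emph{arbitrary} proper $\Srect$-equivalent subcomplex $\Xrect'\subset\Yrect$, and in the Zorn/transfinite argument apply it with $\Xrect'=\Wrect_{\upalpha}$, maintaining inductively that $\Wrect_{\upalpha}\hookrightarrow\Yrect$ remains an $\Srect$-equivalence.

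Your proposed proof of the enlargement lemma is also different from the paper's and is the riskier point. You invoke a ``horn-filling characterization of $\Srect$-equivalences,'' but as written this is not a correct characterization: lifting against $\mathsf{Horn}(\Srect)$ detects $\Srect$-\emph{locality} of objects, not $\Srect$-acyclicity of cofibrations; the latter is governed by lifting against all core fibrations between $\Srect$-local fibrant objects, which is not a small obstruction theory one can kill cell by cell. The paper (following Hirschhorn) instead applies the functor $\Rfrak$ produced in Proposition \ref{prop_LocalizatorFromAlmostNothing} to the bicartesian square: $\Rfrak$ detects $\Srect$-equivalences between cofibrant objects and sends the square to a square of cellular inclusions between \emph{fibrant} cell complexes, where one can control by compactness how many cells are needed at each step to make the vertical maps into deformation retracts. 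This use of $\Rfrak$ is precisely the adaptation that makes the argument go through in the left semimodel setting, and it is what you should substitute for the horn-filling idea.
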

    \begin{proof}[Sketch proof.] The claim clearly reduces to check if the bounded lifting property suffices to obtain lifts for all cellular inclusions of subcomplexes. Thus, for $\upg\in\Irect_{\M}\text{-subcell}\cap\Eq_{\Srect}$, one applies Zorn's Lemma to the poset of partial solutions to a given lifting problem $\upg\Rightarrow\upf$ (solutions to the lifting problem defined only over a subcomplex) to find a global solution. One argues by contradiction assuming that the maximal element of the poset is not a full solution to $\upg\Rightarrow \upf$.
    The contradiction arises from the fact that one can extend such maximal element to a strictly bigger partial solution by adding a controlled number of cells using Lemma \ref{lem_ConstructionOfBiggerSEquivalentSubcomplex}.
    \end{proof}

    The next lemma, alluded in the sketch proof of Proposition \ref{prop_RestrictGeneratingACOFtoBoundedSubcells}, is just an easy variation of \cite[Proposition 4.5.5]{hirschhorn_model_2003} for left semimodel categories. 
    \begin{lem}\label{lem_ConstructionOfBiggerSEquivalentSubcomplex}
    	Assume that $\M$ is cellular. Then, there exists a cardinal $\upgamma$ for which the following holds: if $\Xrect'\subset \Yrect$ is a proper cellular inclusion of $\Irect_{\M}$-cellular complexes which is also an $\Srect$-equivalence, there is a subcomplex $\Zrect$ of $\Yrect$ such that: (1) $\Zrect$ has less than $\upgamma$ cells; (2) $\Zrect$ is not included in $\Xrect'$; and (3) the following 
    		 square of cellular inclusions is bicartesian (both a pullback and a pushout) and its vertical maps are $\Srect$-equivalences
    		$$
    		\begin{tikzcd}[ampersand replacement=\&]
    		\Zrect\cap \Xrect' \ar[r]\ar[d]\ar[rd,phantom,"\square" description]\& \Xrect'\ar[d]\\
    		\Zrect\ar[r] \& \Zrect\cup \Xrect'
    		\end{tikzcd}.
    		$$
    \end{lem}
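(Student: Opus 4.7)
The approach is to adapt Hirschhorn's cardinality argument \cite[Proposition 4.5.5]{hirschhorn_model_2003} to the left semimodel setting. First I would fix $\upgamma$ as a regular cardinal strictly dominating the cardinalities of $\Irect_{\M}$ and $\Irect_{\V}$, together with the cellular presentation sizes of the domains and codomains of all maps in $\Srect$ and in $\mathsf{Horn}(\Srect)$; the cellularity of $\M$ is exactly what guarantees the existence of such invariants and ensures that any object built from fewer than $\upgamma$ cells is $\upgamma$-small with respect to cellular inclusions.

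Given the $\Srect$-equivalence cellular inclusion $\Xrect'\subsetneq \Yrect$ of $\Irect_{\M}$-complexes, pick a cell of $\Yrect$ not in $\Xrect'$ and let $\Zrect_0\subseteq\Yrect$ be the smallest subcomplex containing it, which has fewer than $\upgamma$ cells. I would build an increasing chain $\Zrect_0\subseteq\Zrect_1\subseteq\cdots\subseteq\Yrect$ of subcomplexes of size $<\upgamma$ by the following recipe: at stage $n$, any potential witness to the failure of $\Zrect_n\cap\Xrect'\hookrightarrow\Zrect_n$ to be an $\Srect$-equivalence --- encoded as a lifting obstruction against $\mathsf{Horn}(\Srect)$ on the enriched mapping object into a fibrant $\Srect$-local target --- is resolved inside $\Yrect$, which is possible because $\Xrect'\hookrightarrow\Yrect$ is itself an $\Srect$-equivalence and the sources and targets of horns are $\upgamma$-presentable; $\Zrect_{n+1}$ is then defined to be $\Zrect_n$ enlarged by the (fewer than $\upgamma$-many) cells of $\Yrect$ needed to absorb all such witnesses.

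Finally, take $\Zrect=\bigcup_n \Zrect_n$. Regularity of $\upgamma$ yields (1); (2) is immediate. For (3), the bicartesian property of the square is one of the cellularity axioms: intersections and unions of subcomplexes of an $\Irect_{\M}$-cell complex form pushout-pullback squares, because cofibrations are effective monomorphisms. The left vertical $\Zrect\cap\Xrect'\to\Zrect$ is an $\Srect$-equivalence by construction, since every potential horn-lifting obstruction was absorbed at some finite stage. The right vertical $\Xrect'\to\Zrect\cup\Xrect'$ is then the cobase change of a cellular inclusion between cofibrant $\Irect_{\M}$-complexes, hence a homotopy pushout; applying $\mathbb{R}\underline{\M}(-,\Vrect)$ for fibrant $\Srect$-local $\Vrect$ turns this pushout into a homotopy pullback in $\V$ where the opposite leg is an equivalence, forcing this leg to be an equivalence as well.

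The main obstacle is the bookkeeping of the absorption step: one must certify that every failure-witness at stage $n$ can be resolved inside $\Yrect$ with only $<\upgamma$ new cells, and that this process closes under $\omega$-iteration without escaping the cardinal bound. This is precisely where cellularity of $\M$ is indispensable, both to secure a cardinal $\upgamma$ with sufficient closure properties and to localize horn-lifting obstructions inside bounded subcomplexes of $\Yrect$.
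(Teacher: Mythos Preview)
Your overall architecture mirrors Hirschhorn's argument, but the absorption step contains a genuine gap. You propose to detect the failure of $\Zrect_n\cap\Xrect'\hookrightarrow\Zrect_n$ to be an $\Srect$-equivalence via ``lifting obstructions against $\mathsf{Horn}(\Srect)$ on the enriched mapping object into a fibrant $\Srect$-local target''. This cannot work as stated. If you mean literal lifting against $\mathsf{Horn}(\Srect)$, recall that the paper explicitly records (Bousfield's example \cite[Example~2.1.6]{hirschhorn_model_2003}) that $\overline{\mathsf{Horn}}(\Srect)\text{-cof}_{\circ}\subsetneq\Cof_{\circ}\cap\Eq_{\Srect}$: lifting against horns does \emph{not} characterise $\Srect$-equivalence among core cofibrations, so absorbing all such obstructions still would not force the map to be an $\Srect$-equivalence. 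If instead you mean testing $\underline{\M}(-,\Vrect)$ for all fibrant $\Srect$-local $\Vrect$, then you are quantifying over a proper class of test objects, and no cardinal $\upgamma$ can bound the number of cells needed to kill all such witnesses simultaneously.

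The paper (following \cite[Proposition~4.5.5]{hirschhorn_model_2003}) avoids this by first applying the functor $\Rfrak$ of Proposition~\ref{prop_LocalizatorFromAlmostNothing} to the entire bicartesian square. The resulting square consists of cellular inclusions between \emph{fibrant} cell complexes, and $\Srect$-equivalence of $\Zrect_n\cap\Xrect'\hookrightarrow\Zrect_n$ becomes ordinary equivalence of $\Rfrak(\Zrect_n\cap\Xrect')\hookrightarrow\Rfrak(\Zrect_n)$, i.e.\ a strong deformation retract condition. This is a single, concrete obstruction (existence of a retraction and a homotopy) whose data are maps out of a fixed cylinder object and hence factor through bounded subcomplexes by compactness; likewise the hypothesis on $\Xrect'\hookrightarrow\Yrect$ is used in the form that $\Rfrak(\Xrect')\hookrightarrow\Rfrak(\Yrect)$ is a deformation retract, which is what lets you push the missing homotopies into $\Yrect$ using only $<\upgamma$ new cells. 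The cardinal $\upgamma$ is then chosen so that $\Rfrak$ and the cylinder preserve $<\upgamma$-cell complexes (Remark~\ref{rem_RegularCardinalForBousfieldSmithArgument}). Without passing through $\Rfrak$, you have no mechanism to reduce the class-sized $\Srect$-equivalence condition to a set-sized problem amenable to the cardinality argument.
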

    \begin{proof}[Sketch proof.]
    	Note that left properness is not needed anywhere in \cite[Proposition 4.5.5]{hirschhorn_model_2003}. We provide an outline of this proof for completeness.
    	
    	The subcomplex $\Zrect$ is built via a small object type construction: it starts at a subcomplex $\Zrect_{0}$ satisfying $(1)$ and $(2)$, which exists since $\Xrect'\subset \Yrect$ is a proper inclusion, and it transfinitely adds cells to make the vertical maps in $(3)$ closer to be $\Srect$-equivalences. 
    	
    	What has to be found is a way to add a controlled number of cells in each step to get closer to condition $(3)$. The upshot is to apply a functor $\Rfrak$ fulfilling the conclusions of Proposition \ref{prop_LocalizatorFromAlmostNothing} to the bicartesian square (since it detects $\Srect$-equivalences between cofibrant objects). Then, one crucially notes that the resulting square consists of cellular inclusions between fibrant cellular complexes. This allows one to control with a cardinal $\upgamma$ how much cells are needed to get closer to make $\Rfrak(\Zrect\cap \Xrect')\to \Rfrak(\Zrect) $ and $\Rfrak(\Xrect')\to \Rfrak(\Zrect\cup \Xrect')$ equivalences (in fact, deformation retracts) in each step of the transfinite construction. 	
    \end{proof}

    \begin{rem}\label{rem_RegularCardinalForBousfieldSmithArgument}
    	Lemma \ref{lem_ConstructionOfBiggerSEquivalentSubcomplex} requires a fine analysis of the interplay between a functor which detects $\Srect$-equivalences and the category of $\Irect_{\M}$-cell complexes and cellular inclusions between them. In the setting treated in this work, such functor $\Rfrak$ fulfils the conclusions of Proposition \ref{prop_LocalizatorFromAlmostNothing} (indeed, a variant of its construction which restricts to the category of cellular complexes and cellular inclusions as in \cite[Section 4.4]{hirschhorn_model_2003}). In different settings, for instance the ones discussed in \cite{goerss_moduli_2005, harper_topological_2019}, homology type functors are applied to detect the new class of equivalences. 
    	
    	The mentioned analysis is what leads us to set the cardinal $\upgamma$. It is chosen so that the following conditions, applied in \cite[Proposition 4.5.5]{hirschhorn_model_2003}, hold:
    	\begin{itemize}
    		\item Cell complexes with $<\upgamma$ cells are $\upgamma$-compact,
    		\item For any cell of a cell complex, there is a subcomplex which contains it with $<\upgamma$ cells,
    		\item $\Rfrak$ preserves cell complexes with $<\upgamma$ cells,
    		\item There is a cylinder construction which preserves cell complexes with $<\upgamma$ cells. 
    	\end{itemize}
    	Formally, the cardinal $\upgamma$ is specified in \cite[Definition 4.5.3]{hirschhorn_model_2003}. 
    \end{rem}
    
    \begin{rem}\label{rem_EffectiveMonoForIntersections}
    	The proof of Lemma \ref{lem_ConstructionOfBiggerSEquivalentSubcomplex} and hence of Proposition \ref{prop_RestrictGeneratingACOFtoBoundedSubcells} are the only places where the full strength of cellularity is applied. More concretely, that cellular maps are effective monomorphisms is considered just to ensure existence of intersections of subcomplexes and control over their cells in Lemma \ref{lem_ConstructionOfBiggerSEquivalentSubcomplex} (see \cite[Diagram (1.5.1) and Lemma 1.5.3]{goerss_moduli_2005}). Therefore, when working with cellular model categories, one can relax the effective monomorphism condition to hold just for core cofibrations (or cellular inclusions between cell complexes).
    \end{rem}
\end{paragraph}

\begin{paragraph}{Left Bousfield localization of left semimodel structures.}
		   We are ready to state the main result of this section. Recall that $\V$ is a closed symmetric monoidal model category admitting a set of generating cofibrations with cofibrant domains.

	\begin{thm}\label{thm_BousfieldLocalizationCellularCase} Let $\M$ be a tractable cellular Fresse (resp. Spitzweck) left $\V$-semimodel and $\Srect$ a set of maps in $\M$. Then, the left Bousfield localization of $\M$ at $\Srect$ exists as a tractable cellular Fresse (resp. Spitzweck) left $\V$-semimodel category and its bifibrant objects are those bifibrant objects of $\M$ which are $\Srect$-local. 
	\end{thm}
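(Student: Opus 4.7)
The plan is to verify that
$$
\Lrect_{\Srect}\M = (\Mrect,\Cof,\Eq_{\Srect},(\Cof\cap\Eq_{\Srect})^{\boxslash})
$$
is a cofibrantly generated left $\V$-semimodel category, with generating cofibrations $\Irect_{\M}$ inherited from $\M$ and a set $\Jrect_{\Srect}$ of generating acyclic cofibrations to be produced via the Bousfield--Smith cardinality argument. Tractability and cellularity ensure the hypotheses of Proposition \ref{prop_LocalizatorFromAlmostNothing}, so I start by fixing the resulting $\V$-enriched homotopical construction $\Rfrak$ together with a natural $\Srect$-equivalence $\Xrect\rightsquigarrow \Rfrak\Xrect$ landing in fibrant $\Srect$-local objects; this is the main technical input for everything that follows.

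Next I would run the chain of reductions of the preceding paragraphs on $\Cof\cap\Eq_{\Srect}$: Proposition \ref{prop_RestrictGeneratingACOFtoCore} restricts to core cofibrations, Proposition \ref{prop_RestrictGeneratingACOFtoSubcells} restricts further to cellular inclusions of subcomplexes, and Proposition \ref{prop_RestrictGeneratingACOFtoBoundedSubcells} (via Lemma \ref{lem_ConstructionOfBiggerSEquivalentSubcomplex}) cuts this down to bounded size, yielding
$$
\Jrect_{\Srect} = \Jrect_{\M}\,\cup\,\big(\Irect_{\M}\textup{-subcell}_{<\upgamma}\cap\Eq_{\Srect}\big)
$$
with the identity $\Jrect_{\Srect}^{\boxslash}\cap\Eq_{\Srect}=\AFib$. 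The domains of $\Jrect_{\Srect}$ are cofibrant by tractability, so Quillen's small object argument applies within $\Lrect_{\Srect}\M$.

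The axioms of left semimodel category are then checked in Kan-recognition style. The lifting axiom $\big(\Cof\cap\Eq_{\Srect}\big)\boxslash\big(\Cof\cap\Eq_{\Srect}\big)^{\boxslash}$ is automatic; its companion $\Cof_{\circ}\boxslash\AFib^{\Lrect_{\Srect}\M}$ reduces via the identity above to $\Cof_{\circ}\boxslash\AFib$, which holds in $\M$. The factorization $(\Cof,\AFib)$ for cofibrant-source maps (all maps in the Spitzweck variant) is inherited from $\M$. The remaining factorization $(\ACof^{\Lrect_{\Srect}\M},\Fib^{\Lrect_{\Srect}\M})$ for maps with cofibrant source is produced by the small object argument on $\Jrect_{\Srect}$: the left factor is a $\Jrect_{\Srect}$-cell complex with cofibrant source and, by the same pushout/transfinite-composite argument as Lemma \ref{lem_HornCellsAreSEquivalences} (using $\Rfrak$ to detect $\Srect$-equivalences among cofibrant objects), it lies in $\Cof\cap\Eq_{\Srect}$. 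This cofibrant source restriction is precisely what forces the output to be a left semimodel category rather than a full model category. The $\V$-semimodel structure is preserved by repeating the core Quillen bifunctor verifications of Proposition \ref{prop_EnrichmentForLocalizator}.

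Universality is immediate: any left Quillen functor out of $\M$ inverting $\Srect$ inverts every $\Jrect_{\Srect}$-cell complex with cofibrant source, hence all of $\Cof\cap\Eq_{\Srect}$, and descends to $\Lrect_{\Srect}\M$. For the bifibrant characterization, a bifibrant $\Xrect$ is cofibrant in $\M$ and has rlp against $\Jrect_{\Srect}$; the $\Jrect_{\M}$-part gives $\M$-fibrancy, and rlp against the relevant core cofibrations in $\Eq_{\Srect}$ (in particular against $\mathsf{Horn}(\Srect)\subseteq\Cof\cap\Eq_{\Srect}$, by Lemmas \ref{lem_HornsAreCoreCofibrations} and \ref{lem_HornCellsAreSEquivalences}) yields $\Srect$-locality, while the converse is clear. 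The main obstacle is, as expected, the cardinality argument underlying Proposition \ref{prop_RestrictGeneratingACOFtoBoundedSubcells} and Lemma \ref{lem_ConstructionOfBiggerSEquivalentSubcomplex}: producing a regular cardinal $\upgamma$ that simultaneously controls compactness, $\Rfrak$-preservation of small complexes, and a small cylinder construction is where the relaxed effective-monomorphism condition of Remark \ref{rem_EffectiveMonoForIntersections} really bites; everything else amounts to careful bookkeeping of the dualized axioms.
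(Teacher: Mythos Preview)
Your overall strategy matches the paper's: produce $\Jrect_{\Srect}$ via the chain of reductions (Propositions \ref{prop_RestrictGeneratingACOFtoCore}--\ref{prop_RestrictGeneratingACOFtoBoundedSubcells}), then verify a Kan-type recognition for left semimodel structures. The factorization and lifting arguments you sketch are essentially those of the paper.

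There is, however, a genuine gap in your treatment of the bifibrant characterization. You dispose of the implication ``bifibrant $\Srect$-local in $\M$ $\Rightarrow$ bifibrant in $\Lrect_{\Srect}\M$'' with the phrase ``the converse is clear''. It is not: this is the hard direction. Being fibrant in $\Lrect_{\Srect}\M$ means $\Zrect\to\mathbb{1}$ lies in $(\Cof\cap\Eq_{\Srect})^{\boxslash}$, and having rlp against $\overline{\mathsf{Horn}}(\Srect)$ or even against all of $\Jrect_{\Srect}$ does not give this, because $\Jrect_{\Srect}\text{-cof}$ may be strictly smaller than $\Cof\cap\Eq_{\Srect}$ (this is exactly the phenomenon behind Bousfield's example). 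The paper proves this direction by taking an arbitrary $\upj\in\Cof_{\circ}\cap\Eq_{\Srect}$, using that $\upj^*\colon\Ho\M(\target(\upj),\Zrect)\to\Ho\M(\source(\upj),\Zrect)$ is a bijection (this is where $\Srect$-locality of $\Zrect$ is used, and where cofibrancy of $\Zrect$ is needed to compute these hom-sets), obtaining an up-to-homotopy lift, and then straightening it to an actual lift via the homotopy extension property of the core cofibration $\upj$. The remark immediately following the theorem in the paper makes explicit that without the cofibrancy of $\Zrect$ this argument breaks down, which is why only \emph{bi}fibrant objects are characterized.

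A smaller point: invoking Proposition \ref{prop_EnrichmentForLocalizator} for the $\V$-structure is not quite right, since that proposition is stated for right semimodel localizations produced from a BF-reflector; here you are in a left semimodel setting with no BF-reflector available. The paper instead checks directly that the $\V$-tensor is core left Quillen for $\Lrect_{\Srect}\M$ via the pushout-product / adjunction manipulation $\Irect_{\V}\,\boxslash\,\underline{\M}(\upi\square\upj,\Zrect)$ iff $(\Irect_{\V}\square\upi)\,\boxslash\,\underline{\M}(\upj,\Zrect)$, and then appeals to tractability and Proposition \ref{prop_WeakQuillenPairForCoreCofibGen}. The spirit is the same as what you wrote, but the reference you cite does not literally apply.
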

	\begin{proof} We prove that
		$
	    \Lrect_{\Srect}\M=\big(\Mrect,\Cof,\Eq_{\Srect},(\Cof\cap\Eq_{\Srect})^{\boxslash}\big)
		$
		is a Spitzweck left semimodel category by applying a variation of Kan's recognition principle for these structures (Fresse's case follows the same arguments). It suffices to check that the sets $\Irect_{\M}$ (generating cofibrations) and $\Jrect_{\Srect}$ (generating acyclic cofibrations), defined in  Proposition \ref{prop_RestrictGeneratingACOFtoBoundedSubcells},  satisfy 
		\begin{itemize}
			\item[(i)](Compatibility of core acyclic cofibrations) $\Jrect_{\Srect}\text{-cof}_{\circ}\subseteq \Irect_{\M}\text{-cof}_{\circ}\cap \Eq_{\Srect}$,
			\item[(ii)](Compatibility of acyclic fibrations) ${\Jrect_{\Srect}}^{\boxslash}\cap\Eq_{\Srect}={\Irect_{\M}}^{\boxslash}=\AFib$.
		\end{itemize}
		By definition of $\Jrect_{\Srect}$, (i) is analogous to Lemma \ref{lem_HornCellsAreSEquivalences} and (ii) is the content of Proposition \ref{prop_RestrictGeneratingACOFtoBoundedSubcells}.

        The resulting structure $\Lrect_{\Srect}\M$ is a $\V$-semimodel category. This claim follows if we prove that the $\V$-tensor is a core left Quillen bifunctor by tractability of $\V$ and $\M$ plus an obvious variation of Proposition \ref{prop_WeakQuillenPairForCoreCofibGen}. Thus, as in the proof of Proposition \ref{prop_EnrichmentForLocalizator}, we are reduced to show that $\upi\square \upj$ is an $\Srect$-equivalence provided that $\upi$ and $\upj$ are core cofibrations and $\upj$ is also an $\Srect$-equivalence. By definition, we have to prove that $\underline{\M}(\upi\square\upj,\Zrect)$ is a core acyclic fibration for any fibrant $\Srect$-local object $\Zrect$. As usual, this fact follows from inspection over 
        $$
        \Irect_{\V}\,\boxslash\,\underline{\M}(\upi\square\upj,\Zrect) \quad \text{iff} \quad
        \big(\Irect_{\V}\square\upi\big)\,\boxslash\,\underline{\M}(\upj,\Zrect).
        $$
 		
		The universal property of $\Lrect_{\Srect}\M$ can be proved just observing that the identity functor  $\id\colon\M\to\Lrect_{\Srect}\M$ is a core left Quillen functor. This implies by tractability and Proposition \ref{prop_WeakQuillenPairForCoreCofibGen} that $\id\colon\M\rightleftarrows\Lrect_{\Srect}\M\colon \id$ is a Quillen $\V$-adjunction of left semimodel categories. 
		
		Let us finish by characterizing bifibrant objects of $\Lrect_{\Srect}\M$. These are $\Srect$-local since this condition can be encoded as having right lifting property against $\overline{\mathsf{Horn}}(\Srect)$ and this set belongs to $\Cof_{ \circ}\cap\Eq_{\Srect}$ by tractability and Lemmas \ref{lem_HornsAreCoreCofibrations}, \ref{lem_HornCellsAreSEquivalences}. For the reverse implication, take a map  $\upj\in\Cof_{ \circ}\cap\Eq_{\Srect}$, a bifibrant $\Srect$-local object $\Zrect$ and a lifting problem $\upj\Rightarrow(\Zrect\to\mathbb{1})$. By definition of $\Srect$-equivalence, we have a bijection of hom-sets in the homotopy category of $\M$,  
		$$
		\upj^*\colon\Ho\M(\source(\upj),\Zrect)\overset{\simeq}{\longrightarrow}\Ho\M(\target(\upj),\Zrect).
		$$
		This bijection combined with the fact that $\upj$ is a core cofibration and $\Zrect$ bifibrant in $\M$ yields a solution to the lifting problem up to (right) homotopy. Use the homotopy extension property of $\upj$ to obtain an actual solution for $\upj\Rightarrow(\Zrect\to\mathbb{1})$ from that up to homotopy solution.
	\end{proof}
	
A non enriched version of Theorem \ref{thm_BousfieldLocalizationCellularCase} also holds by similar arguments.
	
\begin{rem} We were not able to provide a characterization of fibrant objects in Theorem \ref{thm_BousfieldLocalizationCellularCase}. The issue comes when one tries to show that fibrant $\Srect$-local implies fibrant in $\Lrect_{\Srect}\M$. Replicating the idea in the cited result to characterize bifibrant objects, the best one can get is an up to left homotopy solution to any lifting problem of a map in $\Cof_{\circ}\cap\Eq_{\Srect}$ against a fibrant $\Srect$-local object. We do not know how to find an actual solution from this up to homotopy solution without assuming that fibrant $\Srect$-local objects admit path objects.
\end{rem}

However, following an idea of D.White, one can obtain a characterization of fibrant objects in the non-enriched case and under a weak hypothesis in the enriched case.

\begin{prop}\label{prop:CharactOfFibrantObjectsInHirschhornLBL}
Under the hypotheses of Theorem \ref{thm_BousfieldLocalizationCellularCase}, the fibrant objects of $\Lrect_{\Srect}\M$ are the $\Srect$-local fibrant objects of $\M$ provided the monoidal unit $\mathbb{I}_{\V}\in\V$ is cofibrant.

In the non-enriched version of Theorem \ref{thm_BousfieldLocalizationCellularCase}, the claim always holds. 
\end{prop}
\begin{proof} Let us explain the $\V$-enriched case. As explained in the cited theorem and following remark, it remains to see that every $\Srect$-local and fibrant object $\Zrect\in\M$ has rlp against any map $\upj\in\Cof_{ \circ}\cap\Eq_{\Srect}$. Since the bifunctor $\underline{\M}\colon\M^{\text{op}}\times\M\to \V$ is core left Quillen and $\Zrect$ is  $\Srect$-local, $\underline{\M}(\upj,\Zrect)$ is a core acyclic fibration in $\V$. Then, $\mathbb{I}_{\V}$ being cofibrant has llp against $\underline{\M}(\upj,\Zrect)$. One concludes the claim by an usual adjunction argument:
	$$
	\mathbb{I}_{\V}\,\boxslash\,\underline{\M}(\upj,\Zrect) \quad \text{iff} \quad 
	\big(\mathbb{I}_{\V}\otimes\upj\big)\,\boxslash\,\Zrect \quad \text{iff} \quad \upj\,\boxslash\,\Zrect.
	$$
	
The non-enriched case follows from the analogous analysis with homotopy function complexes (\cite[Proposition 17.8.9]{hirschhorn_model_2003} adapted to left semimodel categories). The proof of such proposition and the results that it requires hold in this situation because we just use core cofibrations.
\end{proof}
\end{paragraph}

\section{Applications}\label{sect_Applications}
	Before discussing applications, we summarize how the theory of Bousfield localization and semimodel structures interact in the following table. Note that there is an analogous table obtained by replacing all instances of ``Spitzweck" and ``coSpitzweck" below by ``Fresse".
		$$
		\begin{array}{|l|l|cc|c|l|}
		\hline 
		\multicolumn{2}{|c|}{\textit{Bousfield localization}} & \multicolumn{2}{c|}{\textit{Input/Output structure}} & \multicolumn{2}{c|}{\textit{Existence result}} \\ \hline \hline
		\multicolumn{2}{|c|}{
			\parbox[c][15mm]{0mm}{}
		\begin{array}{c}
		\text{\small{left Bousfield localization}}\\[-.5mm]
		\text{\small{via BF-reflector}}
		\end{array}
		}                                  &   \multicolumn{2}{c|}{\parbox[c][15mm]{0mm}{}
			\begin{array}{c}
			\text{\small{coSpitzweck right}}\\[-.5mm]
			\text{\small{semimodel category}}
			\end{array} }                       & \multicolumn{2}{c|}{\text{\small{Theorem }} \ref{thm_LBLfromLocalizator}}                        \\ \hline
		\multicolumn{2}{|c|}{
			\parbox[c][15mm]{0mm}{}
		\begin{array}{c}
		\text{\small{left Bousfield localization in}}\\[-.5mm]
		\text{\small{cellular/combinatorial settings}}
		\end{array}
	}                                  &   \multicolumn{2}{c|}{\parbox[c][15mm]{0mm}{}
		\begin{array}{c}
		\text{\small{Spitzweck left}}\\[-.5mm]
		\text{\small{semimodel category}}
		\end{array} }                       & \multicolumn{2}{c|}{
		\parbox[c][15mm]{0mm}{}
			\begin{array}{c}
			\text{\small{Theorem }}\ref{thm_BousfieldLocalizationCellularCase} \text{\small{ and}}\\[-.5mm]
			\cite[\text{\small{Theorem A}}]{white_left_2020}
			\end{array}		
}                     \\ \hline
     \multicolumn{2}{|c|}{
     	\parbox[c][15mm]{0mm}{}
     	\begin{array}{c}
     	\text{\small{right Bousfield localization}}\\[-.5mm]
     	\text{\small{via BF-coreflector}}
     	\end{array}
     }                                  &   \multicolumn{2}{c|}{\parbox[c][15mm]{0mm}{}
     	\begin{array}{c}
     	\text{\small{coSpitzweck left}}\\[-.5mm]
     	\text{\small{semimodel category}}
     	\end{array} }                       & \multicolumn{2}{c|}{\text{\small{dual of Theorem }} \ref{thm_LBLfromLocalizator}}                        \\ \hline
     \multicolumn{2}{|c|}{
     	\parbox[c][15mm]{0mm}{}
     	\begin{array}{c}
     	\text{\small{right Bousfield localization in}}\\[-.5mm]
     	\text{\small{tractable settings}}
     	\end{array}
     }                                  &   \multicolumn{2}{c|}{\parbox[c][15mm]{0mm}{}
     	\begin{array}{c}
     	\text{\small{Spitzweck right}}\\[-.5mm]
     	\text{\small{semimodel category}}
     	\end{array} }                       & \multicolumn{2}{c|}{\cite[\text{\small{Theorem 5.19}}]{barwick_left_2010}}                      \\ \hline
	\end{array}
	$$

In the common ground of left Bousfield localization of a model category $\M$ at a set of maps $\Srect$, the differences between the existence results above become transparent. The left Bousfield localization of $\M$ at $\Srect$ exists among
\begin{itemize}
	\item coSpitzweck right semimodel categories if $\M$ is left proper (assuming that there is a set that detects fibrant objects by rlp).
	\item Spitzweck left semimodel categories if $\M$ is cellular or combinatorial for generating sets of (acyclic) cofibrations with cofibrant domain (tractability).
\end{itemize}
Concerning right Bousfield localization at a set of objects, we have not found a suitable construction of BF-coreflector to apply the dual of Theorem \ref{thm_LBLfromLocalizator} dualizing the results in Section \ref{sect_LocalizatorsFromSmallData}. That is why the comparison with Barwick's construction in \cite[Section 5]{barwick_left_2010} is not given.

We next show some applications of Theorems \ref{thm_LBLfromLocalizator} and \ref{thm_BousfieldLocalizationCellularCase}. 

\paragraph{Localizations of Hurewicz and mixed model structures.} 

Let $\mathsf{Top}$ be the category of compactly generated weak Hausdorff (cgwh) spaces. It carries three closed monoidal model structures as discussed in \cite[Chapter 17]{may_more_2012}: (1) the Hurewicz model structure $\mathsf{Top}_{\text{h}}$, (2) the Quillen model structure $\mathsf{Top}_{\text{q}}$, (3) the mixed model structure $\mathsf{Top}_{\text{m}}$.\footnote{Some brief comments. The Hurewicz model structure is also called Str\o m model structure. The existence of mixed model structures is discussed in \cite[Section 17.3]{may_more_2012} together with their properties. As pointed out by D.White, May-Ponto work with compactly generated (cg) spaces in \cite{may_more_2012} and we have chosen cgwh spaces instead. This difference creates no problems since both choices yield nice closed symmetric monoidal categories of spaces compatible with Hurewicz and Quillen model structures on them.}

It is well known that the left Bousfield localization at any set of maps $\Srect$ of $\mathsf{Top}_{\text{q}}$ exists, but the same question remains open for $\mathsf{Top}_{\text{h}}$ and $\mathsf{Top}_{\text{m}}$ (see \cite[Remark 17.5.7]{may_more_2012}). Our insight is that if such  localizations of $\mathsf{Top}_{\text{h}}$ or $\mathsf{Top}_{\text{m}}$ exist as model categories in general may be a hard question
, but showing that they exist as semimodel categories follows by the results in this work. In fact, we address this question for both model structures in two ways which depend on the chosen mapping space which is going to detect $\Srect$-local objects (see Section \ref{sect_LocalizatorsFromSmallData}).

\begin{rem}
	The existence of left Bousfield localizations for both $\mathsf{Top}_{\text{h}}$ and $\mathsf{Top}_{\text{m}}$ cannot be addressed by previous results since, for example, these model structures are not cofibrantly generated. In \cite{barthel_construction_2013}, it is discussed how one can use Garner's soa to see that $(\ACof,\Fib)$ in $\mathsf{Top}_{\text{h}}$ is a(n algebraic) wfs. This wfs is not generated by a set of maps  and it is shared by both model structures, $\mathsf{Top}_{\text{h}}$ and $\mathsf{Top}_{\text{m}}$.  
\end{rem}

Let us consider $\mathsf{Top}_{\text{h}}$ to lead our argumentation. As a cartesian closed model category, it is equipped with a Quillen bifunctor (the usual mapping space)
$
\Map\colon {\mathsf{Top}_{\text{h}}}^{\text{op}}\times \mathsf{Top}_{\text{h}}\rightarrow \mathsf{Top}_{\text{h}}.
$
Note that $\mathsf{Top}_{\text{h}}$ also carries a simplicial structure, whose associated mapping simplicial set $\Map_{\bullet}$ is simply given by postcomposing $\Map$ with the singular complex functor. Hence, from the set of maps $\Srect$, we obtain two different notions\footnote{For the mixed model structure, the analogous notions coincide since $\mathsf{Top}_{\text{m}}$ is Quillen equivalent to $\mathsf{Top}_{\text{q}}$.} of being $\Srect$-local for a space depending on the usage of $\Map$ or $\Map_{\bullet}$ in the definition. We introduce a bit of notation to note their difference:  
\begin{itemize}
	\item
	a space $\Xrect$ is $(\Srect\vert\,\text{h})$\emph{-local} if $\Map(\upf,\Xrect)$ is an homotopy equivalence for any $\upf\in\Srect$.
	\item
	 a space $\Xrect$ is  $\Srect$\emph{-local} if $\Map(\upf,\Xrect)$ is a weak homotopy equivalence for any $\upf\in\Srect$.
\end{itemize} 
 Of course, $(\Srect\vert\,\text{h})$-\emph{equivalences} (resp. $\Srect$-\emph{equivalences}) are defined in the same way.

\begin{prop}\label{prop_LocalizationsOfHurewiczMixedModelsSIMPLICIALCASE} Let $\Srect$ be a set of continuous maps in $\mathsf{Top}$. Then, the left Bousfield localization of $\mathsf{Top}_{\textup{h}}$ (resp.\;$\mathsf{Top}_{\textup{m}}$) at $\Srect$-equivalences exists among coSpitzweck right semimodel categories. Its fibrant objects are the $\Srect$-local spaces.
\end{prop}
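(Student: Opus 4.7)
My approach is to apply Theorem \ref{thm_LBLatSequivalencesWith1StepLocalizator} to $\M = \mathsf{Top}_{\text{h}}$ (respectively $\mathsf{Top}_{\text{m}}$), treated as a simplicially enriched model category. Since the $\Srect$-equivalences in the statement use $\Map_{\bullet} = \mathrm{Sing}\,\Map$ to detect $\Srect$-local objects, the natural choice of enriching category is $\V = \mathsf{sSet}$ with its Quillen model structure, and the enrichment is realized via the tensor $(K,X)\mapsto |K|\times X$, its cotensor, and its $\underline{\M}=\Map_{\bullet}$.

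The hypotheses of Theorem \ref{thm_LBLatSequivalencesWith1StepLocalizator} then have to be verified one by one. First, \emph{left properness}: every object is (Hurewicz-)cofibrant in $\mathsf{Top}_{\text{h}}$, so $\mathsf{Top}_{\text{h}}$ is left proper; for $\mathsf{Top}_{\text{m}}$, left properness is recorded in \cite{may_more_2012}. Second, \emph{detection of fibrant objects}: every topological space is Hurewicz fibrant, since the map to a point trivially has the homotopy lifting property, hence every object is fibrant in both model structures; therefore one can take $\Jrect = \emptyset$ and the condition becomes vacuous. Third, \emph{small object argument for $\overline{\mathsf{Horn}}(\Srect)$}: the horns are pushout products of the simplicial cells $|\partial \Delta^n|\hookrightarrow |\Delta^n|$ with maps in $\Srect$, so they are Hurewicz (resp.\ mixed) cofibrations by the pushout-product axiom; these are closed $T_1$ inclusions in compactly generated weakly Hausdorff spaces, with respect to which every space is small, so Quillen's soa applies.

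Once these hypotheses are in place, Theorem \ref{thm_LBLatSequivalencesWith1StepLocalizator} delivers the left Bousfield localization as a coSpitzweck right semimodel category, and Proposition \ref{prop_CharacterizationOfFibrantsInLBLwithLocalizator} identifies its fibrant objects as the fibrant $\Srect$-local objects, i.e.\ as the $\Srect$-local spaces. The main point to iron out is the preliminary verification that the simplicial tensor structure on $\mathsf{Top}_{\text{h}}$ (respectively $\mathsf{Top}_{\text{m}}$) is really a core Quillen bifunctor compatible with Hurewicz (resp.\ mixed) cofibrations; this rests on the fact that the realizations $|\partial \Delta^n|\hookrightarrow |\Delta^n|$ come from NDR-pairs and on the standard pushout-product properties of NDR-pairs in $\mathsf{Top}$.
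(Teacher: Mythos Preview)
Your proof is correct and follows essentially the same route as the paper: apply Theorem~\ref{thm_LBLatSequivalencesWith1StepLocalizator} via the simplicial enrichment, using that every object is fibrant so that $\Jrect=\varnothing$ suffices. The one point of divergence is the small object argument: you invoke Quillen's soa directly, relying on the (correct, but worth citing carefully in CGWH) fact that every space is small relative to closed inclusions, whereas the paper opts for Garner's soa---which applies to any set of maps in $\mathsf{Top}$ without a smallness check---and then must separately verify that the resulting left factor is still a relative $\mathsf{Horn}(\Srect)$-cell complex, which it does using that Hurewicz cofibrations are monomorphisms. Your route is a touch more direct once smallness is in hand; the paper's route trades that verification for a different one. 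A minor omission on your side: the paper notes (in a footnote) that one first replaces $\Srect$ by Hurewicz cofibrations so that the horns are genuinely core cofibrations, a step you use implicitly when invoking the pushout-product axiom.
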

\begin{proof} The proof is the same for both models, so we consider $\mathsf{Top}_{\text{h}}$. Since we are using the simplicial structure of $\mathsf{Top}_{\text{h}}$, we consider the following set of horns
$$
\mathsf{Horn}(\Srect)=\left\{\partial\Delta^{\upn}\hookrightarrow \Delta^{\upn}\text{ with }\upn\geq 0\right\}\square \Srect\footnote{First one has to replace $\Srect$ by (strong) Hurewicz cofibrations, but this is customary so we do not mention it.}.
$$
Because any object in $\mathsf{Top}_{\text{h}}$ is fibrant, $\Zrect\in\mathsf{Top}_{\text{h}}$ is $\Srect$-local iff it satisfies rlp against $\mathsf{Horn}(\Srect)$. Any set of maps in $\mathsf{Top}$ admits Garner's soa (\cite[Theorem 4.4]{garner_understanding_2009}), in particular $\mathsf{Horn}(\Srect)$.  It seems that we can apply Proposition \ref{prop_LocalizatorLeftProperCase} and its consequence, Theorem \ref{thm_LBLatSequivalencesWith1StepLocalizator}, and conclude that the right semimodel category 
$
\mathsf{Top}_{\text{h,}\Srect}=(\mathsf{Top},\Cof_{\text{h}},\Eq_{\Srect},(\Cof_{\text{h}}\cap\Eq_{\Srect})^{\boxslash})
$ 
is the left Bousfield localization of $\mathsf{Top}_{\text{h}}$ at $\Srect$-equivalences, but there is a subtlety. 
 We apply Garner's soa, so the analysis of relative $\mathsf{Horn}(\Srect)$-cell complexes may not be enough. Luckily, (strong) Hurewicz cofibrations are monomorphisms, so  Garner's soa applied to $\mathsf{Horn}(\Srect)$ admits a simplification (\cite[Remark 12.5.6]{riehl_categorical_2014}) for which the left factor in any factorization is a relative $\mathsf{Horn}(\Srect)$-cell complex.
\end{proof}

\begin{prop}\label{prop_LocalizationsOfHurewiczMixedModelsTOPOLOGICALCASE} Let $\Srect$ be a set of continuous maps in $\mathsf{Top}$. Then, the left Bousfield localization of $\mathsf{Top}_{\textup{h}}$ at $(\Srect\vert\,\text{h})$-equivalences exists among coSpitzweck right semimodel categories. Its fibrant objects are the $(\Srect\vert\,\text{h})$-local spaces.
\end{prop}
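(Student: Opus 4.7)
The plan is to mimic Proposition \ref{prop_LocalizationsOfHurewiczMixedModelsSIMPLICIALCASE}, swapping the simplicial enrichment $\Map_{\bullet}$ for the topological self-enrichment $\Map\colon\mathsf{Top}^{\text{op}}_{\text{h}}\times \mathsf{Top}_{\text{h}}\to\mathsf{Top}_{\text{h}}$ of the cartesian closed Hurewicz structure. First I would replace $\Srect$ by (strong) Hurewicz cofibrations without affecting $(\Srect\vert\,\text{h})$-equivalences, so that $\Map(f,X)$ is automatically a Hurewicz fibration for each $f\in\Srect$; consequently $X$ is $(\Srect\vert\,\text{h})$-local iff $\Map(f,X)$ is an acyclic Hurewicz fibration (a DR-fibration) for every $f\in\Srect$. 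Next I would select a set $I_{\mathsf{Top}_{\text{h}}}$ of Hurewicz cofibrations whose right lifting class is precisely the DR-fibrations, furnished by the algebraic analysis of Str\o m's structure \`a la Barthel-May.

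With this choice, set
$$
\mathsf{Horn}(\Srect)=I_{\mathsf{Top}_{\text{h}}}\,\square\,\Srect,
$$
which consists of Hurewicz cofibrations by the pushout product axiom of the cartesian Hurewicz structure. By $\square$-$\Map$ adjointness, $X$ has rlp against $\mathsf{Horn}(\Srect)$ iff $\Map(f,X)$ has rlp against $I_{\mathsf{Top}_{\text{h}}}$ for all $f\in\Srect$, iff $X$ is $(\Srect\vert\,\text{h})$-local. Since every object of $\mathsf{Top}_{\text{h}}$ is fibrant, no augmentation is needed and $\overline{\mathsf{Horn}}(\Srect)=\mathsf{Horn}(\Srect)$. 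Garner's soa applies to $\mathsf{Horn}(\Srect)$ as in the proof of Proposition \ref{prop_LocalizationsOfHurewiczMixedModelsSIMPLICIALCASE}, and since strong Hurewicz cofibrations are closed embeddings I can invoke the simplification \cite[Remark 12.5.6]{riehl_categorical_2014} ensuring that the left factor of each factorization is a genuine relative $\mathsf{Horn}(\Srect)$-cell complex.

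Combined with left properness of $\mathsf{Top}_{\text{h}}$, the hypotheses of Theorem \ref{thm_LBLatSequivalencesWith1StepLocalizator} are then satisfied and produce the desired coSpitzweck right semimodel structure on $(\mathsf{Top},\Cof_{\text{h}},\Eq_{(\Srect\vert\,\text{h})},(\Cof_{\text{h}}\cap\Eq_{(\Srect\vert\,\text{h})})^{\boxslash})$. The characterization of its fibrant objects as the $(\Srect\vert\,\text{h})$-local spaces follows from Proposition \ref{prop_CharacterizationOfFibrantsInLBLwithLocalizator}, since every object of $\mathsf{Top}_{\text{h}}$ is fibrant to start with, so that $\Rfrak$-fibrant simplifies to $(\Srect\vert\,\text{h})$-local.

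The hard part will be securing the set $I_{\mathsf{Top}_{\text{h}}}$: the Str\o m structure is notoriously not classically cofibrantly generated, so one must either invoke an algebraic/Garner-style generation result for the wfs $(\Cof_{\text{h}},\AFib_{\text{h}})$, or construct $I_{\mathsf{Top}_{\text{h}}}$ explicitly from NDR-pair data parameterized by a large enough cardinal and verify the rlp characterization against Str\o m's DR-fibration criterion by hand. In either approach, the key point is that Proposition \ref{prop_LocalizatorLeftProperCase}'s appeal to a set of generating cofibrations of the enriching category is used only through the lifting characterization of acyclic fibrations, which the algebraic presentation supplies even in the absence of classical cofibrant generation.
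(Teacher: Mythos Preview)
Your strategy has a genuine gap at precisely the point you flag as ``the hard part'': there is no set $I_{\mathsf{Top}_{\text{h}}}$ whose right lifting class is the class of acyclic Hurewicz fibrations. The Str\o m model structure is known not to be cofibrantly generated in the classical sense, so option (b) --- building such a set from NDR-pair data bounded by a cardinal --- is not a matter of bookkeeping; it is impossible. Option (a) is more promising in spirit, but it does not slot into the paper's machinery as written: the very definition of $\mathsf{Horn}(\Srect)=I_{\V}\square\Srect$ in Section~\ref{sect_LocalizatorsFromSmallData} presupposes a \emph{set} $I_{\V}$, and Proposition~\ref{prop_LocalizatorLeftProperCase} uses that set both to detect $\Srect$-locality \emph{and} to feed into a small object argument. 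Replacing $I_{\V}$ by a generating small category (as in Barthel--Riehl's algebraic treatment) would force you to reprove Lemmas~\ref{lem_HornsAreCoreCofibrations} and~\ref{lem_HornCellsAreSEquivalences} and the soa step in that algebraic framework, which is a nontrivial extension you have not carried out. Your closing remark that the generating set is ``used only through the lifting characterization'' is not quite right: it is also used to guarantee that $\mathsf{Horn}(\Srect)$ is small enough for the soa to run.

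The paper avoids this obstacle entirely by taking a different route. Rather than trying to manufacture a BF-reflector via the horn machinery of Section~\ref{sect_LocalizatorsFromSmallData}, it imports a ready-made one: setting $\upf=\coprod_{\Srect}$, it invokes Halperin's \emph{strong localization functor} $\Lrect^{\text{s}}_{\upf}$ (announced in \cite[proof of Theorem 1.B.5]{farjoun_cellular_1996}), which is already a BF-reflector on $\mathsf{Top}_{\text{h}}$ for the $(\Srect\vert\,\text{h})$-equivalences, and then applies Theorem~\ref{thm_LBLfromLocalizator} directly. This bypasses any need for cofibrant generation of the enriching category. The contrast is instructive: Proposition~\ref{prop_LocalizationsOfHurewiczMixedModelsSIMPLICIALCASE} goes through the horn route because $\mathsf{sSet}$ \emph{is} cofibrantly generated, whereas the present proposition must appeal to an external construction precisely because $\mathsf{Top}_{\text{h}}$ as enriching category is not.
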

\begin{proof}
Defining the map $\upf$ as the coproduct of all maps in $\Srect$, the claim follows by an application of Theorem \ref{thm_LBLfromLocalizator} to the strong localization functor $\Lrect^{\text{s}}_{\upf}\colon \mathsf{Top}_{\textup{h}}\to  \mathsf{Top}_{\textup{h}}$ constructed by V.\;Halperin in his PhD thesis \cite{halperin_localization}. 
\end{proof}

 \begin{rem} Halperin's thesis \cite{halperin_localization} is not publicly avalaible, but the construction that we apply above was announced in \cite[proof of Theorem 1.B.5]{farjoun_cellular_1996}. The actual details are involved and we are not going to replicate them here, but at least we note that the construction is applicable in more general settings, as diagrams of topological spaces.\footnote{Halperin uses in \cite{halperin_localization} the category of compactly generated Hausdorff spaces. However, his arguments are completely general and applicable to any convenient category of topological spaces, such as cgwh spaces.}

 Recall that diagram categories like $[\Drect,\mathsf{Top}]$ admit Hurewicz type model structures by \cite{barthel_construction_2013}. They also carry projective model structures constructed from $\mathsf{Top}_{\text{q}}$ by transfer. Hence, it is possible to generalize Propositions  \ref{prop_LocalizationsOfHurewiczMixedModelsSIMPLICIALCASE} and \ref{prop_LocalizationsOfHurewiczMixedModelsTOPOLOGICALCASE} to $[\Drect,\mathsf{Top}]$, producing semimodel categories that present ``exotic" homotopy categories, e.g. homotopy sheaves with values in topological spaces for which descent axioms hold up to strong homotopy equivalence, $\Upgamma$-spaces satisfying Segal's conditions up to strong homotopy equivalences or mixed versions of them. 
 \end{rem}

Regarding right Bousfield localizations, it is important to recall that there are no interesting ones for $\mathsf{Top}_{\text{q}}$ (and hence of $\mathsf{Top}_{\text{m}}$) unless we consider pointed spaces, as noted by Chach\'olski in his thesis\footnote{As pointed out by the anonymous referee, according to a remark in  \cite{farjoun_cellular_1996}, the famous joke ``\emph{considering colocalizations of unpointed spaces is pointless}" is due to G.Mislin.}. A modern way to justify this is that the $\infty$-category of $\infty$-groupoids, since it is generated under homotopy colimits by a point, only has as coreflexive subcategories the initial and the terminal ones. That does not happen for $\mathsf{Top}_{\text{h}}$, which can admit exotic right Bousfield localizations. In any case of interest, one has a kind of dual to Proposition \ref{prop_LocalizationsOfHurewiczMixedModelsSIMPLICIALCASE}.
\begin{prop}\label{prop_RBLTopologicalSpacesSIMPLICIALCASE}
Let $\Krect$ be a set of objects   in $\mathsf{Top}_{(*)}$. Then, the right Bousfield localization of $\mathsf{Top}_{(*),\textup{h}}$ (resp.\;$\mathsf{Top}_{*,\textup{m}}$) at $\Krect$-equivalences exists among coSpitzweck left semimodel categories. Its cofibrant objects are the $\Krect$-cellular spaces.
\end{prop}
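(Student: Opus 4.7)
The plan is to dualize the argument for Proposition \ref{prop_LocalizationsOfHurewiczMixedModelsSIMPLICIALCASE}: I would construct a BF-coreflector $(\Qfrak,\upepsilon\colon \Qfrak\Rightarrow\id)$ on $\mathsf{Top}_{(*),\textup{h}}$ (resp.\ $\mathsf{Top}_{*,\textup{m}}$) whose values are $\Krect$-cellular spaces, and then invoke the dual of Theorem \ref{thm_LBLfromLocalizator}. Viewing these model categories as left semimodel categories, the dual framework of Section \ref{sect_LBLviaLocalizators} applies.

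After replacing each $K \in \Krect$ by a cofibrant representative in the chosen model, I would introduce the set of cohorns
$$
\mathsf{Cohorn}(\Krect) = \left\{K \otimes \bigl(\partial\Delta^n \hookrightarrow \Delta^n\bigr) : K \in \Krect,\ n \geq 0\right\},
$$
using the simplicial tensoring of $\mathsf{Top}_{(*)}$. Applying Garner's small object argument (in the simplified form already used in the proof of Proposition \ref{prop_LocalizationsOfHurewiczMixedModelsSIMPLICIALCASE}, available because Hurewicz cofibrations are monomorphisms) to $\emptyset \to X$ produces a functorial factorization $\emptyset \to \Qfrak X \xrightarrow{\upepsilon_X} X$ in which $\emptyset \to \Qfrak X$ is a relative $\mathsf{Cohorn}(\Krect)$-cell complex and $\upepsilon_X \in \mathsf{Cohorn}(\Krect)^{\boxslash}$. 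The first clause guarantees that $\Qfrak X$ is $\Krect$-cellular; the second translates, via the simplicial adjunction, to $\Map_{\bullet}(K,\upepsilon_X)$ being an acyclic fibration of simplicial sets for every $K \in \Krect$, making $\upepsilon_X$ a $\Krect$-equivalence. Since every object in $\mathsf{Top}_{(*),\textup{h}}$ is already fibrant, the simplicial mapping spaces $\Map_{\bullet}(K,-)$ are homotopically meaningful without further replacement.

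It remains to verify that $(\Qfrak,\upepsilon)$ is a BF-coreflector in the sense dual to Definition \ref{defn_BFReflector}, i.e.\ that $\Qfrak$ is homotopical and $\Qfrak\upepsilon$, $\upepsilon\Qfrak$ are natural equivalences. Both follow from the fact that a $\Krect$-equivalence between $\Krect$-cellular spaces is already a weak equivalence (a dual of Lemma \ref{lem_HornCellsAreSEquivalences}, proved by observing that the class of objects $C$ for which $\Map_{\bullet}(C,-)$ detects a given $\Krect$-equivalence as a weak equivalence is closed under the cell-attaching operations generating $\Krect$-cellular complexes), combined with 2-out-of-3. The dual of Theorem \ref{thm_LBLfromLocalizator} then yields the coSpitzweck left semimodel structure, and the dual of Proposition \ref{prop_CharacterizationOfFibrantsInLBLwithLocalizator} identifies its cofibrant objects with the $\Krect$-cellular spaces. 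The main delicate step will be controlling the behavior of $\mathsf{Cohorn}(\Krect)$-cell complexes under $\Map_{\bullet}(K,-)$ in the mixed model, where cofibrations are more restrictive; this reduces to the pushout-product argument analogous to the one already invoked in the proof of Proposition \ref{prop_LocalizationsOfHurewiczMixedModelsSIMPLICIALCASE}.
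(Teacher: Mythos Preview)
Your proposal is correct and follows the same high-level strategy as the paper---produce a BF-coreflector and invoke the dual of Theorem \ref{thm_LBLfromLocalizator}---but where the paper simply cites Farjoun's cellular approximation functor $\mathsf{CW}_{\Arect}$ from \cite[Section 2.B]{farjoun_cellular_1996} as the required coreflector, you instead build it by hand via Garner's small object argument applied to your set $\mathsf{Cohorn}(\Krect)$. The two constructions are close cousins: Farjoun's $\mathsf{CW}_{\Arect}$ is itself a transfinite cell-attachment process of the same flavour, so you are essentially reproving a variant of his result rather than invoking it. Your route is more self-contained and makes transparent why nothing special happens in the mixed model (the cohorns are h-cofibrations, hence monomorphisms, so both the simplified Garner argument and the homotopy-pushout analysis carry over verbatim); the paper's citation, on the other hand, spares the reader the verification that the one-step construction is already homotopy idempotent. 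One small quibble: calling the key fact ``a $\Krect$-equivalence between $\Krect$-cellular spaces is an equivalence'' a \emph{dual} of Lemma \ref{lem_HornCellsAreSEquivalences} is loose---it is not the literal dual statement---though the inductive argument you sketch (closing the class of $C$ with $\Map_{\bullet}(C,f)$ a weak equivalence under cell attachments, then extracting a two-sided homotopy inverse from the induced bijections on $\pi_0$ of mapping spaces) is correct and standard.
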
 
\begin{proof}
Apply the dual of Theorem \ref{thm_LBLfromLocalizator} to the obvious variations of the homotopy idempotent functor $\mathsf{CW}_{\Arect}$ constructed in \cite[Section 2.B]{farjoun_cellular_1996}.
\end{proof}



The dual to Proposition \ref{prop_LocalizationsOfHurewiczMixedModelsTOPOLOGICALCASE} would require a cellular version of the strong localization functor constructed in \cite{halperin_localization}. We leave this work to the interested reader.

\begin{rem}
A similar discussion applies if we replace $\mathsf{Top}$ by a category of chain complexes with either the triple of model structures (Hurewicz, mixed, projective) or either the triple (Hurewicz, mixed, injective) (see  \cite{barthel_six_2014} or  \cite[Chapter 18]{may_more_2012}). In this case, the different derived mapping objects are given by the internal hom and the canonical mapping space constructed via framings.
\end{rem}

\paragraph{Mixed (semi)model structures.} The existence of the mixed model structure, both for spaces and chain complexes, is a consequence of a method to combine model structures invented by Cole, see \cite[Section 17.3]{may_more_2012}. It  requires two nicely interacting model structures to produce the mixture, e.g.\ Hurewicz and Quillen model structures on spaces. In this paragraph, we give an alternative approach to find (a substantial part of) the mixed model structure equipped only with the Hurewicz part. This discussion should be compared with \cite[Section 19.1]{may_more_2012}.

The first way to obtain such structure is by means of Proposition \ref{prop_RBLTopologicalSpacesSIMPLICIALCASE}.
\begin{cor} The simplicial right Bousfield localization of $\mathsf{Top}_{\textup{h}}$ at the set  $\left\{\mathbb{S}^{n}\right\}_{n\geq 0}$
exists as a coSpitzweck left semimodel category and is Quillen equivalent to the mixed model structure $\mathsf{Top}_{\textup{m}}$ (and hence to the Quillen model structure $\mathsf{Top}_{\textup{q}}$).
\end{cor}
\begin{proof} The existence comes from Proposition \ref{prop_RBLTopologicalSpacesSIMPLICIALCASE} and the Quillen equivalence follows from the fact that the classes of equivalences coincide by \cite[Section 4]{arlin_detecting_2020}. 
\end{proof}

\begin{rem} Surprisingly, there does not exist a right Bousfield localization of $\mathsf{Top}_{\textup{h}}$ at a set of objects $\Krect$ presenting the homotopy theory of CW-complexes if we define the new equivalences to be those maps $\upf$ such that $\left[\,\Trect,\upf\;\right]$ is a bijection for any $\Trect\in \Krect$. The problem being that weak homotopy equivalences cannot be detected by any  family of ``representable" functors $\left[\,\Trect,\star\,\right]\colon \mathsf{Top}\to \mathsf{Set}$ with $\Trect$ varying in a fixed set because of \cite[Theorem 2.1]{arlin_detecting_2020}.
\end{rem}

A more direct way to obtain a mixed semimodel structure is by considering a particular CW-approximation. Let us consider for example the one associated to the usual  adjunction $\vert\star\vert\colon\mathsf{sSet}_{\textup{KQ}}\rightleftarrows\mathsf{Top}_{\textup{h}}\colon \mathsf{Sing}$, i.e. the BF-coreflector given by $\Xrect\mapsto \vert\mathsf{Sing}(\Xrect)\vert$.
\begin{prop} The right Bousfield localization of $\mathsf{Top}_{\textup{h}}$ associated to the BF-coreflector determined by $\vert\mathsf{Sing}(\Xrect)\vert\to \Xrect$ using  Theorem \ref{thm_LBLfromLocalizator} is Quillen equivalent to  $\mathsf{sSet}_{\textup{KQ}}$.
\end{prop}
\begin{proof}
	The result is obvious taking into account that the BF-coreflector is a CW-approximation, but a direct proof is also possible. That is, $\vert\star\vert\dashv \mathsf{Sing}$ induces a Quillen adjunction between $\mathsf{sSet}_{\textup{KQ}}$ and the right Bousfield localization and it is a Quillen equivalence by choice of BF-coreflector and since realization creates weak homotopy equivalences in $\mathsf{sSet}$.
\end{proof}

Of course, one can use different shapes (replacing the functor $\Delta\to \mathsf{Top}$ that determines $\vert\star\vert\dashv\mathsf{Sing}$) or different CW-approximation functors to produce this semimodel structure, but comparing them requires the effort of checking that the homotopy idempotent functors that we use are CW-approximations.

\begin{rem}
A similar phenomenom happens in homological algebra. The h-model structure on chain complexes (\cite{barthel_six_2014}) can be left or right localized by choising a functorial $\Krect$-flat (resp.\ $\Krect$-injective or $\Krect$-projective) resolution, provided it exists, to get a semimodel structure presenting the derived category. Of course, the existence of full model structures that present these derived categories has been addressed, but such results require much more effort than constructing resolutions.
\end{rem}

\paragraph{Relative homological algebra.} As observed in \cite{christensen_quillen_2002}, the homological algebra one can do in a bicomplete abelian category $\mathcal{A}$ with respect to a projective class $\mathcal{P}$ frequently fits into the model categorical framework. More concretely, a class of objects $\mathcal{P}$ in $\mathcal{A}$ determines a class of $\mathcal{P}$-equivalences and of  $\mathcal{P}$-fibrations in $\Ch(\mathcal{A})$:
\begin{itemize}
\item 	$f$ is an $\mathcal{P}$-equivalence iff  $\Hom(\Prect,f)$ is a quasi-isomorphism in $\Ch(\mathbb{Z})$ for all $\Prect\in \mathcal{P}$;
\item $f$ is a $\mathcal{P}$-fibration iff  $\Hom(\Prect,f)$ is an epimorphism in $\Ch(\mathbb{Z})$ for all $\Prect\in \mathcal{P}$.
\end{itemize}
One of their principal results, \cite[Theorem 2.2]{christensen_quillen_2002}, asserts that these classes yield a model structure $\Ch(\mathcal{A})_{\mathcal{P}}$ provided cofibrant replacements exist (and they provide  conditions for this to happen). For example, the h-model structure on chain complexes over a ring can be obtained this way (\cite[Remark 1.16]{barthel_six_2014}). 

Our interest on these model structures is that they can fail to be cofibrantly generated (\cite[Subsection 2.4]{christensen_quillen_2002}), but even in those cases one has:
\begin{thm}\label{thm:LBLOfRelativeHomologicalAlgebra} Let $\mathcal{A}$ be a presentable  abelian category and $\mathcal{P}$ a projective class on it such that the relative model structure $\Ch(\mathcal{A})_{\mathcal{P}}$ exists. Then, for any set of maps $\Srect$ in $\Ch(\mathcal{A})$, the left Bousfield localization of $\Ch(\mathcal{A})_{\mathcal{P}}$ at $\Srect$ exists as a coSpitzweck right semimodel category.  
\end{thm}
\begin{proof} Immediate from Theorem \ref{thm_LBLatSequivalencesWith1StepLocalizator} because: (i) $\Ch(\mathcal{A})_{\mathcal{P}}$ is left proper by \cite[Proposition 2.8]{christensen_quillen_2002}; (ii) $\Ch(\mathcal{A})$ is presentable and so any set of maps admits Quillen's soa; (iii) any object in $\Ch(\mathcal{A})_{\mathcal{P}}$ is fibrant by definition and thus we can take $\Jrect=\emptyset$ in the definition of $\overline{\mathsf{Horn}}(\Srect)$.
\end{proof}

\begin{rem} When $\Ch(\mathcal{A})_{\mathcal{P}}$ is cofibrantly generated, its left Bousfield localization at any set of maps exists as a full model structure since $\Ch(\mathcal{A})_{\mathcal{P}}$ is then combinatorial and left proper. Hence, when $\Ch(\mathcal{A})_{\mathcal{P}}$ is cofibrantly generated, we are not getting anything new. However, this is not the case when $\Ch(\mathcal{A})_{\mathcal{P}}$ is not cofibrantly generated.
\end{rem}

\paragraph{Truncations and t-structures.} 
 Recall that a t-structure on a triangulated category $\EuScript{T}$ (\cite[Section IV.4]{gelfand_methods_2003}) yields a way to find an abelian category inside $\EuScript{T}$. The idea behind t-structures is to collect the formal properties that (co)connective complexes have within the unbounded derived category of an abelian category, because its intersection, $\EuScript{T}^{\leq 0}\cap\EuScript{T}^{\geq 0}$, recovers the initial abelian category. However, the notion of a t-structure has important applications apart from this motivational idea, for instance: they are fundamental to define perverse sheaves; they are related to Posnikov towers and spectral sequences; and surprisingly, from the author point of view, the existence of a certain t-structure implies Grothendieck's standard conjectures. In this brief application, we discuss how lifting a t-structure to the model category level yields several Bousfield localizations, which can be interesting for example due to their relation to Postnikov towers.

Suppose given a stable model category $\D$ whose homotopy category carries a t-structure $(\Ho\D^{\leq 0},\Ho\D^{\geq 0})$ (see \cite[Definition IV.4.2]{gelfand_methods_2003}), such as cochain complexes with the mixed model structure and usual truncation functors. Then, one could ask for a functorial lifting of the distinguished triangle
$$
\Xrect^{\leq 0} \to \Xrect\to \Xrect^{\geq 1}\xrightarrow{+1} 
$$
into the model categorical level. Provided such lift is given by a BF-coreflector $\uptau^{\leq 0}\Xrect \to \Xrect$ and a BF-reflector $\Xrect\to \uptau^{\geq 0}\Xrect$ on $\D$, i.e.\ essentially the truncations can be made in a point-set level, one obtains directly from Theorem \ref{thm_LBLfromLocalizator}:
\begin{prop}\label{prop_TruncatedModelsForTstructure} The model structure $\D$ admits a left (resp.\ right) Bousfield localization as a coSpitzweck right (resp.\ left) semimodel category which presents the category $\Ho\D^{\leq m}$ (resp.\ $\Ho\D^{\geq m}$) associated to the t-structure on $\Ho\D$. 
\end{prop}

It is even possible to study the homotopy theory of (co)complete objects of $\D$ with respect to the t-structure.
\begin{defn} An object $\Xrect\in \D$ is $\uptau $-\emph{complete} (resp.\ $\uptau$-\emph{cocomplete}) if it can be reconstructed from its truncations $\uptau^{\geq m}\Xrect$ (resp.\ $\uptau^{\leq m}\Xrect$), i.e. if the map 
	$
	\Xrect\rightarrow \holim_{m}\uptau^{\geq m}\Xrect
	$  
(resp.\ 
$
\hocolim_{m}\uptau^{\leq m}\Xrect\to \Xrect
$)	
is an equivalence. 
\end{defn}

\begin{prop}\label{prop_CompleteModelsForTstructure} The model structure $\D$ admits a left (resp.\ right) Bousfield localization as a coSpitzweck right (resp.\ left) semimodel category which presents the homotopy theory of $\uptau$-complete (resp.\ $\uptau$-cocomplete) objects associated to the t-structure on $\Ho\D$. 
\end{prop}
\begin{proof}
The natural transformation $\id\Rightarrow \holim_m\uptau^{\geq m}$  yields a BF-reflector on $\D$ whose associated left Bousfield localization through Theorem \ref{thm_LBLfromLocalizator} presents the homotopy theory of complete objects in $\D$. Analogous for $\hocolim_{m}\uptau^{\leq m}\Rightarrow \id$.
\end{proof}

\paragraph{Aqfts  and locally constant factorization algebras.}
Let us elaborate on  two applications of Theorem \ref{thm_LBLfromLocalizator} related to algebraic quantum field theories and factorization algebras. 

It is well-known how model categories have been applied to codify descent conditions, that is, to construct homotopy theories of sheaves, stacks and higher generalizations. Underneath this codification lies the fact that the involved descent conditions can be encoded as locality with respect to a set of maps, i.e. fibrant objects in a left Bousfield localization. On the contrary, the less common but not less important notion of codescent is way more elusive in terms of incorporating it into a model categorical framework. Let us briefly recall two instances of such codescent conditions.

In the setting of algebraic quantum field theories (diagrams of algebras satisfying some axioms), there is a codescent condition which asks for reconstruction of global observables in a physical system in terms of observables measured in special regions, e.g. causal diamonds. See \cite{carmona_algebraic_2021} and the references therein. The dual of Theorem \ref{thm_LBLfromLocalizator} is a key ingredient in the establishment and the recognition of cofibrant objects in the following:
\begin{thm}{\cite[Theorem B]{carmona_algebraic_2021}} There is a model structure on the category of AQFTs over an orthogonal category $\Crect^{\perp}$, denoted  $\mathcal{QFT}_{\Crect_{\diamond}}(\Crect^{\perp})$, which present the homotopy theory of AQFTs satisfying an appropiate $\Crect_{\diamond}$-local-to-global principle. 
\end{thm}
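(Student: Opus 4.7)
The plan is to realize the desired model structure as a right Bousfield localization, exploiting the dual of Theorem \ref{thm_LBLfromLocalizator} for the recognition of cofibrant objects. First, I would equip $\mathcal{QFT}(\Crect^{\perp})$ with a base model structure, transferred along the free/forgetful adjunction from the underlying category of $\Crect^{\perp}$-diagrams of chain complexes (or topological spaces, depending on the setting); cofibrant generation of the underlying category together with a standard path object argument ensures the transfer succeeds, yielding a combinatorial model structure in which equivalences and fibrations are detected pointwise.

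Second, I would construct a BF-coreflector in the sense dual to Definition \ref{defn_BFReflector}, i.e. an endofunctor $\Qfrak$ equipped with a natural transformation $\varepsilon : \Qfrak \Rightarrow \id$ that is homotopical and homotopy idempotent and which encodes the $\Crect_{\diamond}$-local-to-global principle. The natural candidate is a derived left Kan extension along the inclusion $\Crect_{\diamond} \hookrightarrow \Crect$ followed by the reflection back into $\mathcal{QFT}(\Crect^{\perp})$, applied to a cofibrant replacement of the input AQFT; equivalently, one may use a simplicial bar-type resolution associated with the adjunction between $\Crect_{\diamond}$- and $\Crect$-indexed diagrams. The nontrivial checks are (i) that $\Qfrak$ is homotopical, which follows from the cofibrancy of the chosen replacements, and (ii) that $\Qfrak\varepsilon$ and $\varepsilon\Qfrak$ are natural equivalences, expressing the fact that $\Qfrak$ already takes values in AQFTs satisfying the local-to-global principle.

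Third, I would apply the dual of Theorem \ref{thm_LBLfromLocalizator} to obtain a coSpitzweck left semimodel structure on $\mathcal{QFT}(\Crect^{\perp})$ whose equivalences are the $\Qfrak$-equivalences and whose fibrations coincide with those of the base. By the dual of Proposition \ref{prop_CharacterizationOfFibrantsInLBLwithLocalizator}, the cofibrant objects of this localization are precisely the cofibrant AQFTs which are $\Qfrak$-local, i.e. those satisfying the desired $\Crect_{\diamond}$-local-to-global principle: this is the recognition of cofibrant objects alluded to in the sentence preceding the theorem. To finally upgrade the semimodel structure to the full model structure $\mathcal{QFT}_{\Crect_{\diamond}}(\Crect^{\perp})$, I would exploit combinatoriality of the base together with standard upgrading results such as those of \cite{white_left_2020, barwick_left_2010}.

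The hard part will be constructing $\Qfrak$ so that the strict identities required by Definition \ref{defn_BFReflector} hold on the nose: a naive derived left Kan extension only yields idempotency up to coherent homotopy. An explicit simplicial or Reedy bar resolution tailored to the orthogonality axiom should remedy this. If a fully strictly idempotent construction proves too rigid, one can instead invoke Proposition \ref{prop_LBLForNonFunctorialBFReflectors}, which relaxes strict functoriality of the (co)reflector while preserving the conclusion of Theorem \ref{thm_LBLfromLocalizator}.
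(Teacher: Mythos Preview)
The paper does not actually give a proof of this statement: it is quoted verbatim from \cite{carmona_algebraic_2021} in the Applications section, and the only information supplied here is the sentence immediately preceding it, namely that the dual of Theorem~\ref{thm_LBLfromLocalizator} is ``a key ingredient in the recognition of cofibrant objects.'' So there is no proof in this paper to compare yours against in detail.

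That said, your outline is broadly compatible with that hint. Transferring a base model structure to $\mathcal{QFT}(\Crect^{\perp})$, building a BF-coreflector out of the extension/restriction adjunction along $\Crect_{\diamond}\hookrightarrow\Crect$, and invoking the dual of Theorem~\ref{thm_LBLfromLocalizator} together with the dual of Proposition~\ref{prop_CharacterizationOfFibrantsInLBLwithLocalizator} to identify cofibrant objects is a reasonable reconstruction. Two caveats are worth flagging. First, the paper's phrasing suggests that the dual of Theorem~\ref{thm_LBLfromLocalizator} is used specifically for the \emph{recognition of cofibrant objects}, not necessarily as the mechanism that produces the model structure itself; the construction in \cite{carmona_algebraic_2021} may well proceed by a different route (e.g.\ a direct right Bousfield localization in the tractable sense as in \cite{barwick_left_2010}), with the BF-coreflector entering only at the identification step. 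Second, your final ``upgrading'' from a coSpitzweck left semimodel structure to a genuine model structure is not supplied by any result in this paper, and there is no general black-box theorem that performs such an upgrade; if \cite{carmona_algebraic_2021} really obtains a full model structure, that part of the argument must come from elsewhere and is not something you can simply invoke.
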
 

In the theory of factorization algebras (precosheaves with certain multiplicative structure), there is also an important codescent condition: factorization algebras are cosheaves for the Weiss topology. See \cite{carmona_model_2021} and the references therein for definitions and background on this topic. Again, the dual of Theorem \ref{thm_LBLfromLocalizator} is fundamental to prove the existence of certain right Bousfield localizations in \cite{carmona_model_2021}. This result combined with Theorem \ref{thm_BousfieldLocalizationCellularCase} are essential to prove the following instance of the main theorem in that paper.
\begin{thm}{\cite[Theorem A]{carmona_model_2021}}
	There is a (Spitzweck) left semimodel structure on the category of prefactorization algebras over a manifold with values in $\mathsf{Top}_{\textup{q}}$ presenting the homotopy theory of locally constant factorization algebras over that manifold.
\end{thm}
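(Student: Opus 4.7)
The plan is to build the claimed left semimodel structure by successively imposing on the category of prefactorization algebras the two defining conditions of locally constant factorization algebras: (a) the local constancy condition, that structure maps for disk-into-disk inclusions are weak equivalences, and (b) the Weiss codescent condition, that a prefactorization algebra behaves as a cosheaf for Weiss covers. The two tools to combine are Theorem \ref{thm_BousfieldLocalizationCellularCase} (for a left Bousfield localization in a cellular setting) and the dual of Theorem \ref{thm_LBLfromLocalizator} (for a right Bousfield localization via a BF-coreflector).

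First I would equip prefactorization algebras over $M$ valued in $\mathsf{Top}_{\text{q}}$ with a projective tractable cellular Fresse left semimodel structure $\M$, transferred from $\mathsf{Top}_{\text{q}}$ along the free/forgetful adjunction associated to a coloured symmetric operad $\OpP_{M}$ whose algebras are prefactorization algebras on $M$. Tractability and cellularity are inherited from $\mathsf{Top}_{\text{q}}$ via the standard semifree cellular analysis of operadic algebras. Second, to impose (a), I would pick a set $\Srect$ of maps of prefactorization algebras (adjoining free homotopy inverses to the disk-inclusion structure morphisms on cellular generators) whose local objects are precisely the locally constant prefactorization algebras in $\M$. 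Applying Theorem \ref{thm_BousfieldLocalizationCellularCase} gives the tractable cellular Spitzweck left semimodel category $\Lrect_{\Srect}\M$, and this is where cellularity is essential.

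Third, to impose (b), I would construct a Weiss cosheafification endofunctor $\Qfrak$ on $\Lrect_{\Srect}\M$ together with a natural coaugmentation $\Qfrak \Rightarrow \id$ by a transfinite small object type argument over Weiss covers, of \v{C}ech flavour but performed on cofibrant objects; the output is a BF-coreflector (in the sense dual to Definition \ref{defn_BFReflector}), meaning $\Qfrak$ is homotopical and both $\upeta\Qfrak$ and $\Qfrak\upeta$ are natural equivalences. The dual of Theorem \ref{thm_LBLfromLocalizator} then produces the right Bousfield localization of $\Lrect_{\Srect}\M$ at $\Qfrak$-equivalences as a coSpitzweck left semimodel category, which is in particular a left semimodel structure. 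By construction (a) persists under the second localization and the bifibrant objects are the locally constant Weiss cosheaves, so this semimodel structure models the homotopy theory of locally constant factorization algebras; the required universality is inherited from the universal properties of each localization step.

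The main obstacle will be the construction and analysis of the Weiss cosheafification BF-coreflector $\Qfrak$. Weiss covers are indexed by the large collections of opens covering every finite subset of $M$, so the homotopy colimits implicit in $\Qfrak$ are taken over unwieldy diagrams; the transfinite construction must be arranged so that $\Qfrak$ is genuinely homotopy idempotent on cofibrant prefactorization algebras and so that it preserves the local constancy condition, so that the two localizations compose. A secondary technical point is verifying that the tractable cellular Spitzweck structure $\Lrect_{\Srect}\M$ obtained at step two supplies enough core cofibrant objects for $\Qfrak$ to be a BF-coreflector in the precise sense required by the dual of Theorem \ref{thm_LBLfromLocalizator}; this is where the interplay between the enriched cellular machinery and the Weiss coreflection is most delicate.
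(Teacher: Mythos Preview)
Your overall plan matches what the paper itself indicates: the text preceding the cited theorem explicitly says that the dual of Theorem~\ref{thm_LBLfromLocalizator} combined with Theorem~\ref{thm_BousfieldLocalizationCellularCase} are the essential ingredients, and you deploy them in the natural order (transfer a projective cellular structure to operadic algebras, left Bousfield localize at the set encoding local constancy, then right Bousfield localize via a Weiss-cosheafification BF-coreflector). Since the paper does not actually give a proof here but only cites \cite{carmona_model_2021}, there is no finer comparison to make at the level of argument; your sketch is the expected one and you correctly flag the construction of the BF-coreflector as the hard step.

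One small technical slip: you assert that the output of the second step is a \emph{coSpitzweck} left semimodel category. The dual of Theorem~\ref{thm_LBLfromLocalizator} only yields a coSpitzweck left semimodel structure when its \emph{input} is already coSpitzweck left semimodel, i.e.\ when $(\ACof,\Fib)$ is a full wfs. But the output of Theorem~\ref{thm_BousfieldLocalizationCellularCase} is a \emph{Spitzweck} left semimodel category, which guarantees instead that $(\Cof,\AFib)$ is a full wfs. These are different conditions (cf.\ the table at the start of Section~\ref{sect_Applications} and the remark that a Fresse version of the table also holds). So unless you separately verify that $\Lrect_{\Srect}\M$ is coSpitzweck, the dual of Theorem~\ref{thm_LBLfromLocalizator} only gives you a \emph{Fresse} left semimodel structure at the end. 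This is still a left semimodel structure and suffices for the statement as phrased, but you should adjust the claim accordingly or explain why the intermediate structure is in fact coSpitzweck.
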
 

\paragraph{Stabilization of model categories.} Hovey developed a pair of stabilization machines in \cite{hovey_spectra_2001} whose applicability can be extended to cellular (resp. combinatorial) non left-proper settings by Theorem \ref{thm_BousfieldLocalizationCellularCase} (resp. \cite[Theorem A]{white_left_2020}) as observed in \cite{white_left_2020}. Additionally, we present a different stabilization machine which applies to a completely different class of model categories.

Imitating \cite{hovey_spectra_2001}, we consider a stabilization context to be given by a model category $\M$ equipped with a Quillen adjunction of endofunctors $\Trect\colon\M\rightleftarrows\M:\Upomega$. From this data, one can form a category of $\Trect$-prespectra $\mathsf{Spt}_{\Trect}(\M)$ \cite[Definition 1.1]{hovey_spectra_2001} and equip it with a level model structure characterized by: a map $\mathcal{X}\to\mathcal{Y}$ is a level-equivalence (resp. level-fibration) if $\mathcal{X}(n)\to\mathcal{Y}(n)$ is an equivalence (resp. fibration) $\forall\upn\geq 0$. Note that $\M$ is not required to be cofibrantly generated, since this level-model structure can be constructed by hand. The $\Trect$-stabilization of $\M$ will be a left Bousfield localization of this level-model structure on $\mathsf{Spt}_{\Trect}(\M)$ designed to present the homotopy theory of $\Upomega$-spectra. Recall that a $\Upomega$-spectrum is a $\Trect$-prespectrum $\mathcal{X}\in\mathsf{Spt}_{\Trect}(\M)$ which satisfies that all its maps $\mathcal{X}(n)\to \Upomega\mathcal{X}(n+1)$ are equivalences.

%
%

The following technique consists on a generalization of \cite[Proposition 2.1.5]{schwede_spectra_1997} which drops some hypotheses in that result at the expense of producing just a semimodel category. The idea is to find a BF-reflector $\upeta\colon\mathcal{X}\to\Qstab\mathcal{X}$ whose $\Qstab$-local objects are essentially the $\Upomega$-spectra.

Let us give a preliminary construction which is analogous to that in \cite[Subsection 2.1]{schwede_spectra_1997}: given $\mathcal{X}\in\mathsf{Spt}_{\Trect}(\M)$, we will define a sequential family of $\Trect$-prespectra $(\mathcal{Z}_{r})_{r}$ connected by maps $\mathcal{Z}_{r}\to\mathcal{Z}_{r+1}$ which are levelwise cofibrations. Set $\mathcal{Z}_0=\mathcal{X}$ and construct $\mathcal{Z}_{1}$ by: for each $n\geq 0$, consider the commutative square
$$
\begin{tikzcd}[ampersand replacement=\&]
\mathcal{X}(n)\ar[r, rightarrowtail]\ar[d] \& \mathcal{Z}_{1}(n)\ar[d]\ar[ld, twoheadrightarrow, "\simeq" description]\\
\Upomega\mathcal{X}(n+1) \ar[r] \& \Upomega\mathcal{Z}_{1}(n+1)
\end{tikzcd}.
$$
Originally, we only have the left vertical map, but an application of the functorial factorization $(\Cof,\AFib)$ in $\M$ yields the upper horizontal map and the diagonal. The lower horizontal arrow comes from applying $\Upomega$ to the $(n+1)$ version of the upper one.
Repeat this process inductively to get the sequence of $\Trect$-prespectra 
$
\mathcal{X}=\mathcal{Z}_0\to\mathcal{Z}_1\to\cdots\to\mathcal{Z}_{r}\to \cdots
$
and set $\overline{\Qstab}\mathcal{X}=\mathsf{colim}_{r}\,\mathcal{Z}_{r}.$ 
Note that the structural maps $\overline{\Qstab}\mathcal{X}$ can be defined if we assume that $\Upomega$ preserves sequential colimits of cofibrations and that by construction $\mathcal{Z}_{r}(n)\simeq \Upomega^{r}\mathcal{X}(n+r)$. 

Define $(\Qstab,\upeta)$ as the composition of the previous construction and a level-fibrant replacement in $\mathsf{Spt}_{\Trect}(\M)$ (which is functorial since $\M$ is supposed to admit functorial factorizations), i.e.
$$
\begin{tikzcd}[ampersand replacement=\&]
\upeta\colon\mathcal{X}\ar[rr,"\text{fibrant}","\text{replacement}"'] \& \& \mathsf{R}\mathcal{X} \ar[rr, "\text{colim}", "\text{component}"'] \&\& \overline{\Qstab}\mathsf{R}\mathcal{X}=\Qstab\mathcal{X}.
\end{tikzcd}
$$

\begin{lem}\label{lem_StabilizationReflector} Let $\M$ be a model category with functorial factorizations in which sequential colimits of cofibrations are homotopical. Assume that $\M$ is equipped with a Quillen adjunction $\Trect\colon\M\rightleftarrows\M:\Upomega$ such that $\Upomega$ sends sequential colimits of cofibrations to homotopy colimits. Then, the pair $(\Qstab,\upeta)$ is a BF-reflector on the level-model structure on $\mathsf{Spt}_{\Trect}(\M)$.
\end{lem}
\begin{proof}  We should prove the following points:	
	\begin{itemize}
		\item $\Qstab$ is homotopical with respect to level-equivalences: a level-equivalence $\mathcal{X}\to\mathcal{Y}$ induces equivalences $\Upomega^{r}\mathsf{R}\mathcal{X}(n+r)\simeq \Upomega^{r}\mathsf{R}\mathcal{Y}(n+r)$ for any $(r,n)$ since $\Upomega$ is right Quillen. An inspection of the construction of the sequence $(\mathcal{Z}_{r})_r$ associated to a $\Trect$-prespectrum shows that we obtain a level-equivalence $\Qstab\mathcal{X}\to\Qstab\mathcal{Y}$ provided sequential colimits of cofibrations are homotopical in $\M$.
		\item $\upeta\Qstab$ is a level-equivalence: it suffices to prove that $\overline{\Qstab}\mathcal{X}$ is a $\Upomega$-spectrum if $\mathcal{X}$ is level-fibrant, because if this is the case, the sequence $\widetilde{\mathcal{Z}}_0\to\widetilde{\mathcal{Z}}_{1}\to \cdots$ that defines $\upeta\Qstab$ is comprised of acyclic cofibrations. To see that $\overline{\Qstab}\mathcal{X}$ is a $\Upomega$-spectrum simply apply that the structural maps $\overline{\Qstab}\mathcal{X}(n)\to\Upomega\overline{\Qstab}\mathcal{X}(n+1)$ can be obtained as the sequential colimit of the acyclic fibrations  $\mathcal{Z}_{r}(n)\twoheadrightarrow\Upomega\mathcal{Z}_{r-1}(n+1)$ in the construction of $(\mathcal{Z}_r)_r$. The hypotheses on $\M$ and $\Upomega$ yield that the structural maps $\overline{\Qstab}\mathcal{X}(n)\to\Upomega\overline{\Qstab}\mathcal{X}(n+1)$ are equivalences. 
		
		\item $\Qstab\upeta$ is a level-equivalence: an easy diagram chasing using the previous item and 2-out of-3 for equivalences in $\M$.
	\end{itemize}
\end{proof}

Combining Lemma \ref{lem_StabilizationReflector} and Theorem \ref{thm_LBLfromLocalizator}, we obtain:
\begin{thm}\label{thm_StabilizationOfModelCategories} Let $\M$ be a model category with functorial factorizations in which sequential colimits of cofibrations are homotopical. Assume that $\M$ is equipped with a Quillen adjunction $\Trect\colon\M\rightleftarrows\M:\Upomega$ such that $\Upomega$ sends sequential colimits of cofibrations to homotopy colimits. Then, the level-model structure on $\mathsf{Spt}_{\Trect}(\M)$ admits a left Bousfield localization among (coSpitzweck) right semimodel categories, denoted $\Spt_{\Trect}(\M)$ and called stable semimodel structure, which presents the homotopy theory of $\Upomega$-spectra over $\M$. 	
\end{thm}
\begin{proof} We are reduced to check that $\Upomega$-spectra are essentially $\Qstab$-local objects.
	
Let $\mathcal{X}$ be a level-fibrant $\Upomega$-spectrum. Then, the associated sequence $(\mathcal{Z}_r)_r$ is comprised of  levelwise acyclic cofibrations $\mathcal{Z}_r\to\mathcal{Z}_{r+1}$. Therefore, $\mathcal{X}\to\Qstab\mathcal{X}$ is a level-equivalence.
	
Viceversa, it suffices to consider $\mathcal{X}$ to be a level-fibrant $\Trect$-prespectrum such that $\mathcal{X}\to \overline{\Qstab}\mathcal{X}$ is a level-equivalence. If we show that in this case $\overline{\Qstab}\mathcal{X}$ is a $\Upomega$-spectrum we will conclude the result. This fact was proven in Lemma \ref{lem_StabilizationReflector}, when showing that $\upeta\Qstab$ is a level-equivalence.
\end{proof}

Comparing the former proofs with those of \cite{schwede_spectra_1997}, one obtains also a semimodel version of Theorem \ref{thm_StabilizationOfModelCategories} for non right proper model categories that admit \emph{the small object argument} in the sense of loc.cit.\footnote{These model structures may not be combinatorial nor cellular.}. Also, note that left properness in the cited reference is not necessary for the conclusion that we are pursuing.

\begin{cor} The stabilization of $\mathsf{Top}_{\textup{h}}$ in the usual sense is presented by the stable semimodel structure  $\Spt_{\Upsigma}(\mathsf{Top}_{*,\textup{h}})$ obtained in Theorem \ref{thm_StabilizationOfModelCategories}. The analogous result holds for chain complexes in non-negative degrees with the Hurewicz model structure.
\end{cor}
\begin{proof}
	The condition on sequential colimits is automatic for chain complexes, since all objects are $\textup {h}$-cofibrant \cite[Theorem 18.3.1]{may_more_2012}. For pointed topological spaces, this assumption follows from left properness \cite[Corollary 17.1.2]{may_more_2012}. Hence, one is reduced to check the condition on $\Upomega$ to apply the cited result in both model structures.	
	
    For chain complexes, $\Upomega$ is a degree shifting plus a truncation $\uptau_{\geq 0}$. Hence, $\Upomega$ preserves sequential colimits and h-cofibrations, since h-cofibrations are degreewise split-monomorphisms by \cite[Proposition 1.14]{barthel_six_2014}, and thus $\Upomega$ preserves the required homotopy colimits. 
	
	For spaces, $\Upomega$ is the pointed mapping space functor  $\Map_{*}(\mathbb{S}^1,\star)\colon \mathsf{Top}_{*,\textup{h}}\to \mathsf{Top}_{*,\textup{h}}$. The required property holds since $\mathbb{S}^{1}$ is compact and strong Hurewicz cofibrations are closed inclusions by \cite[Lemma 5.9]{strickland_category_nodate}.
\end{proof}

\begin{rem} The class of model categories covered by Theorem \ref{thm_StabilizationOfModelCategories} cannot be stabilized by any other existing method to the best of the author knowledge. One essential obstruction is that model categories like $\mathsf{Top}_{\textup{h}}$ do not admit a set of objects that detect equivalences.
\end{rem}

	
Also, it will be interesting to combine Theorem \ref{thm_StabilizationOfModelCategories} with Propositions \ref{prop_TruncatedModelsForTstructure}	and \ref{prop_CompleteModelsForTstructure}.

\paragraph{Topological examples in Batanin-White's list.} In \cite{white_left_2020}, Batanin-White list a great deal of novel possible applications of \cite[Theorem A]{white_left_2020}. Some of those applications do not naturally fit into the combinatorial framework whereas they can be proved directly by Theorem \ref{thm_BousfieldLocalizationCellularCase} since the involved model categories are cellular. Anyway, Batanin and White deserve the credit of having proposed combinatorial alternatives for those examples. 

For instance, Examples 5.8 and 5.9 in that paper, which are respectively studied in \cite{goerss_moduli_2005} and \cite{harper_topological_2019}, are actual precursors of Theorem \ref{thm_BousfieldLocalizationCellularCase}. Examples 5.11 and 5.12 are originally proposed for model categories derived from topological spaces, and so they cannot be combinatorial, but they can be cellular. Also, the use of topological methods could also help in other applications like Example 5.17.

All these applications have in common that the model structures that one is willing to localize are of topological nature. The interested reader is strongly recommended to look at \cite{white_left_2020}. Here, we propose a short list of types of model structures that can be localized with Theorem \ref{thm_BousfieldLocalizationCellularCase}\footnote{Recall from Remark \ref{rem_EffectiveMonoForIntersections} that the effective monomorphism condition on cellular categories can be relaxed.}: (1) operadic algebras over cellular model categories like topological spaces with the Quillen model structure, orthogonal spectra, $\mathbb{S}$-modules...; (2) right Bousfield localizations of cellular model structures with all objects fibrant, e.g. projective model structures on diagrams of spaces; (3) resolution type model structures as in \cite{goerss_moduli_2005}.

\begin{paragraph}{Acknowledgments}
	The author would like to thank his advisors, Ramon Flores and Fernando Muro, for their support and time. He also wants to thank Carles Casacuberta, Emmanuel Dror-Farjoun, Carlos Maestro and David White for their disposition, kind attention and interest. Finally, he thanks the anonymous referee for suggesting Christensen-Hovey's model categories for relative homological algebra as a source of examples.
\end{paragraph}

\bibliography{Bibliography}

	$\,$\\
\textsc{Universidad de Sevilla, Facultad de Matem\'aticas, Departamento de \'Algebra-Imus, Avda. Reina Mercedes s/n, 41012 Sevilla, Spain}

\textit{Email address:} \texttt{vcarmona1@us.es}

\textit{url:} \texttt{http://personal.us.es/vcarmona1}

\end{document}